\documentclass[10pt,psamsfonts]{amsart}
\usepackage{amsmath}
\usepackage{amsthm}
\usepackage{amssymb}
\usepackage{multicol}
\usepackage{amscd}
\usepackage{amsfonts}
\usepackage{amsbsy}
\usepackage{epsfig,afterpage}
\usepackage[dvips]{psfrag}
\usepackage{subfigure}
\usepackage{epsfig}
\usepackage{amsmath}
\usepackage[margin=3.6cm]{geometry}
\usepackage{srcltx}
\usepackage{amsfonts}
\usepackage{amssymb}
\usepackage{psfrag}
\usepackage{multicol}
\usepackage{verbatim}
\usepackage{graphicx}
\usepackage{amsmath}
\usepackage{amsthm}
\usepackage{amssymb}
\usepackage{amscd}
\usepackage{amsfonts}
\usepackage{amsbsy}
\usepackage{epsfig}
\usepackage[dvips]{psfrag}
\usepackage{multirow}
\usepackage{graphics}
\usepackage{epstopdf}
\usepackage{overpic}
\usepackage{float}
\usepackage[utf8]{inputenc}

\usepackage{graphicx}

\usepackage[colorlinks=true, linkcolor=blue, citecolor=red]{hyperref}

\newtheorem {theorem} {Theorem}
\newtheorem {proposition} [theorem]{Proposition}
\newtheorem {corollary} [theorem]{Corollary}
\newtheorem {lemma}  [theorem]{Lemma}

\newtheorem {definition} [theorem]{Definition}

\theoremstyle{definition}
\newtheorem {example} [theorem]{Example}
\newtheorem {remark} [theorem]{Remark}

\newtheorem{mtheorem}{Theorem}

\usepackage{float}
\usepackage{tikz, wrapfig}
\tikzset{node distance=3cm, auto}
\allowdisplaybreaks

\begin{document}

\title[Limit cycles in PWS with circular switching manifold]
{Limit cycles in piecewise smooth systems with circular switching manifold}

    \address{$^{1}$ Departamento de Matemática, Instituto de Ciências Exatas (ICEx), Universidade Federal de Minas Gerais (UFMG), Av. Pres. Antônio Carlos, 6627, Cidade Universitária - Pampulha, 31270-901, Belo Horizonte, MG, Brazil}
\address{$^{2}$S\~{a}o Paulo State University (Unesp), Institute of Biosciences, Humanities and
	Exact Sciences. Rua C. Colombo, 2265, CEP 15054--000. S. J. Rio Preto, S\~ao Paulo,
	Brazil.}
    \address{$^{3}$Departament de Matemàtiques, Edifici Cc, Universitat Autònoma de Barcelona, 08193 Bellaterra, Barcelona, Spain}
    \author[Gabriel Rondón, Paulo R. da Silva and Jaume Llibre]{Gabriel Rondón$^{1}$, Paulo R. da Silva$^{2}$ and Jaume Llibre$^{3}$}

\email{grondon@ufmg.br}
\email{paulo.r.silva@unesp.br}
\email{jaume.llibre@uab.cat}

\thanks{ .}

\makeatletter
\@namedef{subjclassname@2020}{\textup{2020} Mathematics Subject Classification}
\makeatother

\subjclass[2020]{30C20, 34A36, 34C07, 37G15.}

\keywords {Piecewise holomorphic systems; limit cycles; Möbius transformations; averaging theory; Lyapunov quantities; antiholomorphic systems; algebraic limit cycles.}
\date{}
\maketitle

\begin{abstract}
We study limit cycles in piecewise complex systems with switching manifold $\mathbb{S}^1$. Using Möbius transformations we establish an equivalence between circular and straight-line discontinuities that preserves periods, stability, and algebraic structure. For piecewise polynomial holomorphic systems we obtain lower bounds on the number of limit cycles via second-order averaging and, for low degrees, via Lyapunov quantities. For piecewise antiholomorphic systems we prove upper bounds: at most $3$ limit cycles in the linear case and $10$ in the quadratic case. We also prove a rigidity theorem: when both components admit classical holomorphic normal forms at the origin no crossing limit cycles exist. Finally, we construct explicit algebraic limit cycles in the circular context, providing, as far as we know the first such examples in the literature.
\end{abstract}


\section{Introduction}

There are several important aspects to understand the dynamics of differential systems such as the existence and bifurcation of limit cycles. In fact, the famous Hilbert's 16th problem is one of the main open problems in the qualitative theory of planar polynomial vector fields, and finding an upper bound for the maximum number of limit cycles in terms of the degree is still a very difficult question. 

In recent years increasing attention has been devoted to the study of limit cycles in piecewise smooth systems (see the monographs \cite{Acary,Brogliato,Kunze}). A subfamily that has attracted particular interest is given by \emph{piecewise holomorphic systems}, because holomorphic functions appear naturally in several areas of applied science, for example in fluid dynamics \cite{BatGK,Mars,Conw}. Dynamically holomorphic systems of the form $\dot{z}=f(z)$ are integrable and do not exhibit limit cycles, which makes them somewhat poor from the qualitative viewpoint. However this changes dramatically in the nonsmooth setting: in \cite{Rondon2022440} the authors showed that piecewise holomorphic
systems can exhibit limit cycles, while in \cite{GASRONSIL1} Gasull et al. obtained limit cycles by analyzing the zeros of the averaging functions and through a degenerate Andronov–Hopf type bifurcation at a monodromic equilibrium point. More recently in \cite{Gasull2025} the authors studied the simultaneous bifurcation of limit cycles for piecewise holomorphic systems separated by a straight line, obtaining Abelian-type integrals that control the bifurcations of periodic orbits after holomorphic and polynomial perturbations.

Most of the existing works assume that the switching manifold is a straight line. In the present paper we study the case where the switching set is the unit circle $\mathbb{S}^1$, a compact one-dimensional real analytic manifold. This appears naturally as a nonlinear generalization of the linear switching manifolds and allows for richer dynamics. Since Möbius transformations are biholomorphic diffeomorphisms of the Riemann sphere $\widehat{\mathbb{C}}=\mathbb{C}\cup\{\infty\}$ mapping circles and lines into circles or lines (see \cite{Beardon,Conw}), we can reduce the general situation to simpler canonical cases. In particular, any circle $\{z\in\mathbb{C}\mid|z-p|=r\}$ can be mapped either onto the unit circle $\mathbb{S}^1=\{z:|z|=1\}$ or onto a straight line, which allows to employ well-established tools from the theory of nonsmooth systems with linear switching manifolds, such as averaging theory, Lyapunov coefficients, and Abelian-type integrals, while at the same time keeping the geometry of the circle in mind for the interpretation of the results.

A Möbius transformation is a biholomorphic diffeomorphism of the Riemann sphere of the form
\begin{equation}\label{mobius_map}
    \phi(z) = \frac{az + b}{cz + d}
\end{equation} 
with $a,b,c,d \in \mathbb{C}$. These transformations form a group under composition. Moreover every Möbius transformation can be expressed as a composition of translations ($\phi(z)=z+\beta$), rotations ($\phi(z)=e^{i\theta}z$), homotheties or dilations ($\phi(z)=\lambda z$ with $\lambda>0$), and the inversion $\phi(z)=1/z$.

Since Möbius transformations are diffeomorphisms, they preserve the qualitative structure of the dynamics under conjugation. 
This makes them particularly suitable for reducing general circular switching manifolds to canonical forms while maintaining the dynamics of interest (see Figure \ref{fig:mobius}).

\begin{figure}[h]
\begin{overpic}[scale=0.5]{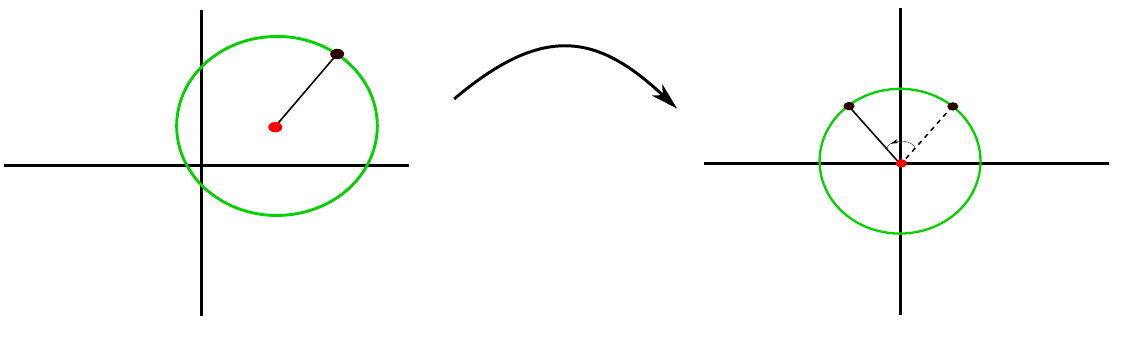}
        \put(30,28){$q$}
        \put(23,17){$p$}
        \put(24,22){$r$}
        \put(85,23){$q$}
        \put(73,23){$q'$}
        \put(49,29){$\phi$}
        \put(78,19){{\tiny$\theta$}}
\end{overpic}
\caption{\footnotesize{We illustrate the effect of Möbius transformations in reducing the general setting to a canonical one. A circle of radius $r$ centered at a point $p \in \mathbb{C}$ is mapped to the unit circle centered at the origin. Furthermore, by composing with a rotation of angle $\theta$, a chosen point $q$ on the boundary can be sent to a new position $q'$ on $\mathbb{S}^1$. Since Möbius maps are diffeomorphisms this reduction preserves the dynamics. Such normalizations allow to simplify the analysis and study the existence of limit cycles bifurcating from points on the unit circle.}}
\label{fig:mobius}
\end{figure}

In this work we will make extensive use of a specific Möbius transformation that maps the unit circle $\mathbb{S}^1$ onto the real line, namely
\begin{equation}\label{mobius_map1}
    \phi(z)=\frac{-z+i}{iz-1}.
\end{equation}
This transformation sends the point $z=i$ to $0$ and, as will be shown it conjugates the piecewise holomorphic system to an equivalent system with switching manifold $\mathbb{R}$ where the theory is well developed.

Motivated by this reduction we now define the piecewise complex system (PWCS) in its canonical form, separated by the unit circle
\begin{equation}\label{ch4:eq111}
\begin{aligned}
\left\{\begin{array}{l}
\dot{z}=F^{+}(z), \quad |z|\geq 1,\\[2pt]
\dot{z}=F^{-}(z), \quad |z|\leq1,
\end{array} \right.
\end{aligned}
\end{equation}
with $z=x+iy$ and $F^{\pm}$ complex-valued functions. Here we concentrate on two fundamental classes of such systems. The first class corresponds to \emph{holomorphic vector fields}, i.e. $F^{+}$ and $F^{-}$ are holomorphic functions of the complex variable $z$. The second class consists of \emph{antiholomorphic vector fields}, i.e. each $F^{\pm}$ is the complex conjugate of a holomorphic function.

Throughout the paper the discontinuity along the unit circle $\mathbb{S}^1$ is understood in the sense of Filippov, see \cite{Filippov88}. Notice that the circle $\Sigma=\mathbb{S}^1$ divides the plane into two regions 
$\Sigma^\pm$ given by $\{z\in\mathbb{C}\mid|z|<1\}$ and $\{z\in\mathbb{C}\mid|z|>1\}$, respectively.

Here we focus on a specific subclass of the holomorphic case: \emph{piecewise polynomial holomorphic systems} (PWHS). These systems are particularly appealing because, on one hand, each smooth component inherits the elegant properties of holomorphic dynamics; on the other hand, the piecewise definition gives rise to phenomena typical of piecewise linear systems, such as sliding motion or limit cycles, and even more complex behavior. The tools of complex analysis are essential in determining their phase portraits, as shown in recent works \cite{GGJ,GGJ2,AMR2170413,Rondon2022440}. In particular, \cite{Rondon2022440} exploits the integrability of holomorphic functions to construct limit cycles in the piecewise setting, while \cite{AMR2170413} uses the Cauchy integral formula to evaluate the Abelian integrals arising in the analysis.
	
While the techniques employed in this work are not exclusive to piecewise holomorphic systems, it is precisely the holomorphic hypothesis that renders the analysis both feasible and elegant. The key observation is that holomorphic vector fields possess a distinctive combination of properties: they are automatically integrable, reversible, and devoid of limit cycles in the smooth regime. Furthermore their singularities admit complete normal forms and all their centers are isochronous. These features make them an ideal testing ground for understanding the new phenomena that appear when a discontinuity is introduced.

The advantage of the holomorphic assumption is also quantitative. A generic polynomial vector field of degree $n$ depends on $n^2+3n+2$ real parameters and may have up to $n^2$ equilibrium points. In contrast a holomorphic polynomial system of degree $n$ is determined by only $2n+2$ real parameters and, unless it is identically zero, has exactly $n$ equilibrium points (counted with multiplicity). This dramatic reduction in complexity is not merely aesthetic: in the piecewise case it leads to strikingly simple expressions for the Lyapunov quantities, which would otherwise be intractable in the general setting.

Let $\mathcal{L}_{n^+,n^-}\in\mathbb{N}\cup\{\infty\}$ denote the maximum number of limit cycles that system \eqref{ch4:eq111} can exhibit in the holomorphic setting. Our main objective is to investigate the number and the nature of limit cycles that can appear in piecewise holomorphic and antiholomorphic systems with circular switching manifold. Note that, by symmetry, $\mathcal{L}_{n^+,n^-}=\mathcal{L}_{n^-,n^+}$ holds trivially.

\subsection{Main Results}

We state our main results. 
\subsubsection{Lower bound for the maximum number of hyperbolic limit cycles bifurcating from a center on $\mathbb{S}^1$}

We focus on the number of limit cycles that can bifurcate from a center lying exactly on the switching manifold $\mathbb{S}^1$. This scenario is depicted in Figure \ref{fig:equilibria_s1} (left): a holomorphic system $\dot{z}=f(z)$ possesses a center point $p$ located on $\mathbb{S}^1$. When perturbed within the class of piecewise holomorphic vector fields sharing $\mathbb{S}^1$ as the switching manifold, the continuum of periodic orbits surrounding $p$ may break, giving rise to limit cycles.

\begin{figure}[h]
\centering
\begin{minipage}{0.32\textwidth}
    \centering
    \begin{overpic}[scale=0.65]{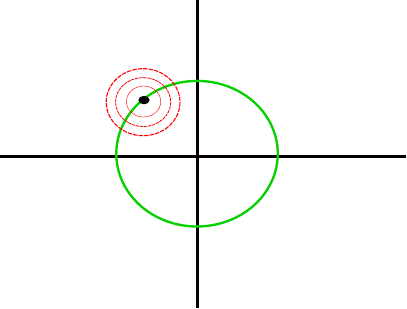}
    \end{overpic}
\end{minipage}
\hfill
\begin{minipage}{0.32\textwidth}
    \centering
    \begin{overpic}[scale=0.65]{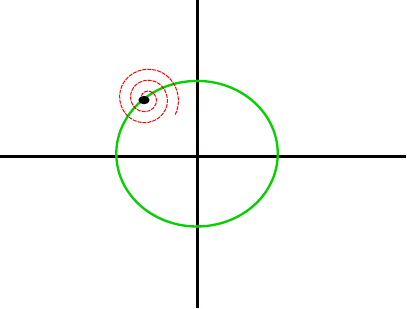}
    \end{overpic}
\end{minipage}
\hfill
\begin{minipage}{0.32\textwidth}
    \centering
    \begin{overpic}[scale=0.65]{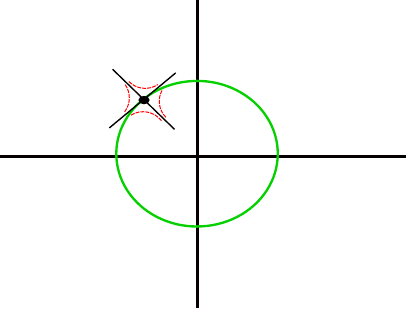}
    \end{overpic}
\end{minipage}
\caption{\footnotesize{Three possible types of equilibrium points lying on the switching manifold $\mathbb{S}^1$: a center (left), a focus (center), and a saddle (right). The center gives rise to a continuum of periodic orbits that can bifurcate into limit cycles under perturbation (Theorem \ref{teo1}). The focus allows for degenerate Hopf bifurcations (Theorem \ref{teonew}). The saddle is the typical configuration for piecewise antiholomorphic systems (Theorem \ref{teo11}).}}
\label{fig:equilibria_s1}
\end{figure}

To construct explicit perturbations that yield lower bounds we work with a concrete unperturbed system that has several desirable properties. We choose $\dot{z}=\frac{1}{2}(1+z^2)$, which has two centers: one at $z=i$ and another at $z=-i$, both lying on $\mathbb{S}^1$. This system is particularly suitable because it is conjugate via a Möbius transformation to a linear center, allowing to apply averaging techniques developed for systems with straight-line discontinuities.

\begin{mtheorem}\label{teo1}
Suppose that $h^\pm$ are holomorphic polynomial functions such that \(\operatorname{deg}(h^\pm) = n^\pm\). Consider the system \eqref{ch4:eq111} with \(F^\pm(z) = \frac{1}{2}(1+z^2) + \epsilon h^\pm\left(\phi(z)\right)\) and the Möbius transformation $\phi$ \eqref{mobius_map1}. Then using averaging theory up to second order, for sufficiently small \(\epsilon\) the maximum number of limit cycles that can bifurcate from the periodic orbits surrounding the center $i$ is given by \(\left\lfloor \frac{3 \max\{n^+, n^-\} + 5}{2} \right\rfloor\)\footnote{\(\lfloor x \rfloor\) is the unique integer satisfying \(\lfloor x \rfloor \le x < \lfloor x \rfloor + 1\).}. This upper bound is achievable, and in this case all these limit cycles are hyperbolic. Consequently we have \(\mathcal{L}_{n^+, n^-} \geq \left\lfloor \frac{3 \max\{n^+, n^-\} + 5}{2} \right\rfloor\).
\end{mtheorem}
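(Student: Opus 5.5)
The plan is to transport the problem to a straight-line switching manifold with the Möbius map $\phi$ in \eqref{mobius_map1}, and there apply the averaging theory for discontinuous systems separated by a line.

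\textbf{Step 1: conjugation.} Set $w=\phi(z)$. A direct computation gives $\phi'(z)=2/(iz-1)^2$. Hence the unperturbed field transforms as
\[
\dot w=\phi'(z)\tfrac12(1+z^2)=\frac{1+iz}{1-iz}.
\]
Expressing $z=\phi^{-1}(w)$, this should reduce to a linear centre $\dot w=\pm i w$ at $w=0$ (the image of $z=i$). The perturbation transforms as $\epsilon\,\phi'(\phi^{-1}(w))\,h^\pm(w)$. Since $\phi'$ is the reciprocal of a square of an affine function of $z$, the factor $\phi'(\phi^{-1}(w))=q(w)$ should be a quadratic polynomial in $w$. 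The transformed system is therefore
\[
\dot w=\pm i w+\epsilon\,q(w)\,h^\pm(w),
\]
a piecewise holomorphic polynomial system of degree $n^\pm+2$, separated by $\mathbb{R}$. The disk $|z|<1$ goes to one half-plane and $|z|>1$ to the other. Because $\phi$ is a diffeomorphism on the relevant region, crossing periodic orbits, their number and their hyperbolicity are preserved. So it suffices to count limit cycles bifurcating from the linear centre in the $w$-plane.

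\textbf{Step 2: averaged functions.} Write $w=re^{i\theta}$, take $\theta$ as the new time, and split $[0,2\pi]$ into $[0,\pi]$ and $[\pi,2\pi]$ according to the two half-planes. This puts the system in the standard form
\[
r'=\epsilon F_1^\pm(\theta,r)+\epsilon^2F_2^\pm(\theta,r)+O(\epsilon^3),
\]
to which the first- and second-order averaging theorem for discontinuous systems applies. The switching set $\theta\in\{0,\pi\}$ is a fixed line, so the nonsmooth correction terms should vanish.

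I would write $h^\pm(w)=\sum_k(a_k^\pm+ib_k^\pm)w^k$ and integrate $\mathrm{Re}\big(e^{-i\theta}q(re^{i\theta})h^\pm(re^{i\theta})\big)$ over half-periods. The result is that the first averaged function $f_1(r)$ is a polynomial in $r$ whose coefficients are explicit linear combinations of the $a_k^\pm,b_k^\pm$. The half-period integrals of $e^{ij\theta}$ are nonzero only for odd $j$, so a parity pattern should appear.

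Next I impose $f_1\equiv0$, which amounts to linear conditions on the parameters. Then I compute $f_2(r)$, the sum of $\int F_2$ and $\int \partial_rF_1\,y_1$ over the two half-periods. This should be a polynomial in $r$ of higher degree (roughly double), with coefficients bilinear in the remaining parameters. Counting its monomials, and dividing by the trivial factor $r$, should give at most $\lfloor(3\max\{n^+,n^-\}+5)/2\rfloor$ positive zeros. That gives the upper bound within second-order averaging.

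\textbf{Step 3: realization and the main obstacle.} The hardest step is to show that this bound is attained: that the coefficients of $f_2$, as functions of the parameters surviving $f_1\equiv0$, are independent enough to prescribe all coefficients of a polynomial with that many simple positive roots. My first attempt would be to compute the Jacobian of the map from parameters to coefficients at a convenient point, for example with most parameters of one side set to zero, and show it has full rank. If that is too messy, I would fall back on exhibiting explicit low-degree families and arguing by induction on $n=\max\{n^+,n^-\}$, using a parity split between $n$ even and $n$ odd to account for the floor.

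Simple zeros of $f_2$ give hyperbolic limit cycles by the averaging theorem. Pulling these back by $\phi^{-1}$ gives the same number of hyperbolic limit cycles surrounding $i$, and hence $\mathcal{L}_{n^+,n^-}\ge\lfloor(3\max\{n^+,n^-\}+5)/2\rfloor$.
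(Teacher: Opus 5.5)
Your plan is essentially the paper's proof: push forward by $\phi$ to $\dot w=iw+\varepsilon\,\tfrac12(1+iw)^2h^\pm(w)$, compute $M_1$ and $M_2$ over the two half-periods, impose $M_1\equiv0$, bound the positive zeros of $M_2$ by Descartes, and realize the bound by choosing the coefficients of $M_2$ independently. The paper merely asserts that independence. If you verify it with a Jacobian, do so at a point where every coefficient of $M_2$ except the leading one vanishes: full rank of a quadratic map at an arbitrary point does not guarantee that a coefficient vector with the maximal number of simple positive zeros lies in the image.

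One correction: $M_2$ has no trivial factor $r$. The perturbation $G^\pm(0)=\tfrac{\varepsilon}{2}h^\pm(0)$ need not vanish, so the equilibrium moves off $w=0$ and both $M_1$ and $M_2$ have generically nonzero constant terms. The bound is therefore Descartes applied directly to the $\lfloor(3\max\{n^+,n^-\}+7)/2\rfloor$ monomials of $M_2$: $1$ and $r$, the even powers $r^{2},r^{4},\dots,r^{2\max\{n^+,n^-\}+2}$, and the odd powers $r^{3},r^{5},\dots,r^{2\lfloor(\max\{n^+,n^-\}+1)/2\rfloor+1}$.
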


Theorem \ref{teo1} provides a lower bound for the maximum number $\mathcal{L}_{n^+, n^-}$ of limit cycles that piecewise holomorphic systems of degrees $n^+$ and $n^-$ can exhibit. This bound comes from analyzing the second-order averaged function, which captures how the perturbation breaks the continuum of periodic orbits into limit cycles. By symmetry, an identical number of limit cycles bifurcates from the center at $z=-i$, though these may coincide with those from $z=i$ depending on the specific perturbation chosen.

\subsubsection{Lower bound for the maximum number of hyperbolic limit cycles bifurcating from foci on $\mathbb{S}^1$}

Our second result concerns the bifurcation of limit cycles from a focus equilibrium point lying on the switching manifold $\mathbb{S}^1$. This situation is illustrated in Figure \ref{fig:equilibria_s1} (center). By composing with a rotation we may assume without loss of generality that this equilibrium is located at $z=i$. Indeed, if the equilibrium is at $p=e^{i\theta_0}\in\mathbb{S}^1$, the rotation $R(z)=e^{i(\pi/2-\theta_0)}z$ satisfies $R(p)=i$, preserves the unit circle, and is holomorphic hence it conjugates the dynamics. This normalization is convenient because the Möbius transformation $\phi$ given in \eqref{mobius_map1} sends $i$ to $0$ and maps $\mathbb{S}^1$ onto the real line, thereby transferring the problem to the well-studied straight-line setting.

Consider the one-parameter unfolding
\[
\dot{z}=F_s^\pm(z),\qquad |z|\gtrless 1,
\]
where $F_s^\pm(z)$ are holomorphic polynomials of degrees $n^\pm$ and $s\in\mathbb{R}^k$ is a vector of small parameters. Assume that for $s=0$ the system has an equilibrium at $z=i$ which is a weak focus of order $k$ for both the inner and outer vector fields, such configuration is called a \emph{weak focus–weak focus} on $\mathbb{S}^1$. 

Taking the pushforward of the unfolding by the Möbius transformation $\phi$ given in \eqref{mobius_map1} we obtain a system on the real line with a weak focus–weak focus at the origin. For this transformed system, the explicit expressions of the first five Lyapunov quantities $V_1,\dots,V_5$ are those computed in \cite{GASRONSIL1}. By virtue of the Möbius equivalence established in Theorem~\ref{thm:mobius_limit_cycle} (which preserves limit cycles and their hyperbolicity), the bifurcation of limit cycles in the original system on $\mathbb{S}^1$ is equivalent to that in the transformed system on $\mathbb{R}$. Therefore, applying the criterion from Proposition~\ref{aux_lemma} (with the known Lyapunov quantities) yields lower bounds for the original system.
\begin{mtheorem}\label{teonew}
Let $\mathcal{L}^0_{n^+,n^-}\le \mathcal{L}_{n^+,n^-}$ denote the maximal number of hyperbolic limit cycles that can bifurcate from a weak focus–weak focus equilibrium on $\mathbb{S}^1$ in a piecewise holomorphic system \eqref{ch4:eq111}. For $n^\pm\in\{1,2\}$ with $n^++n^-\le 4$, the following lower bounds hold:
\[
\mathcal{L}^0_{n^+,n^-}\ge n^++n^-.
\]
In particular, $\mathcal{L}^0_{1,1}\ge 2$, $\mathcal{L}^0_{1,2}=\mathcal{L}^0_{2,1}\ge 3$, and $\mathcal{L}^0_{2,2}\ge 4$.
\end{mtheorem}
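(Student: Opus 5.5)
The plan is to transfer the whole problem to the straight-line setting and then quote the Lyapunov-quantity computations of \cite{GASRONSIL1}. First, given an unfolding $\dot z=F^\pm_s(z)$ on either side of $\mathbb{S}^1$, we push it forward by the Möbius map $\phi$ of \eqref{mobius_map1}. Since $\phi$ is biholomorphic on $\widehat{\mathbb{C}}$ minus the pole $z=-i$, the pushforward is $\dot w=\phi'(\phi^{-1}(w))\,F^\pm_s(\phi^{-1}(w))$, which is again holomorphic. We then check that it sends $i$ to $0$, maps $\mathbb{S}^1$ onto $\mathbb{R}$, and sends the disc and its exterior onto the two half-planes. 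A weak focus of order $k$ at $i$ for each field becomes a weak focus of the same order at $0$, because the linear part at $i$ is conjugated by the complex number $\phi'(i)\neq0$, and holomorphic normal forms are preserved under biholomorphic conjugacy. By Theorem~\ref{thm:mobius_limit_cycle}, crossing limit cycles near $i$, together with their hyperbolicity, correspond bijectively to those near $0$ for the transformed system. It therefore suffices to produce $n^++n^-$ hyperbolic limit cycles bifurcating from a weak focus–weak focus at the origin with switching line $\mathbb{R}$.

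The point requiring care is that the pushforward of a polynomial field of degree $n^\pm$ is rational, not polynomial. For instance, with $\phi^{-1}(w)=(w+i)/(iw+1)$ (up to normalization) the denominator is a power of $(iw+1)$. To handle this, we go in the reverse direction. We take the polynomial holomorphic families on the line realizing the bounds in \cite{GASRONSIL1}, namely systems of degree $n^\pm$ whose Lyapunov quantities $V_1,\dots,V_{n^++n^-+1}$ are explicitly known, and pull them back by $\phi$. Alternatively, we work directly with the class of fields $\phi_*F$ with $F$ polynomial of degree $n^\pm$; this class is a finite-dimensional family whose local jet at $0$ can be prescribed. The key observation is that the Lyapunov quantities depend only on finite jets at the equilibrium. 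The $N$-jet at $0$ of $\phi_*F$ is a local biholomorphic reparametrization of the $N$-jet of $F$ at $i$. Hence the map from the parameters of $F^\pm$ to the jets at $0$ has the same rank as in the polynomial-on-the-line setting, and the computed $V_j$ can be expressed in the new parameters.

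Next we apply Proposition~\ref{aux_lemma}. We exhibit a parameter value where $V_1=\dots=V_{m}=0$ and $V_{m+1}\neq0$ with $m=n^++n^-$. We then verify that the Jacobian of $(V_1,\dots,V_m)$ with respect to $m$ independent parameters is nonzero there. This gives $m$ hyperbolic limit cycles for small perturbations, via successive sign changes in a degenerate Hopf-type bifurcation, with $V_1$ playing the trace or sliding-shift role. We carry this out for each case $(n^+,n^-)\in\{(1,1),(1,2),(2,2)\}$, and the case $(2,1)$ follows by the symmetry $z\mapsto 1/\bar z$-type exchange, or simply by relabeling.

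The main obstacle is this transfer of the independence condition. Concretely, we must ensure that the pulled-back perturbations are genuine holomorphic polynomials of degree $n^\pm$, or equivalently that the degree-$n^\pm$ polynomial family on the circle still has the needed number of free jet coefficients at $i$. The first approach we will try is to use the rotation/translation freedom together with the fact that a polynomial of degree $n$ has exactly $2n+2$ real parameters. These suffice to prescribe the $n$-jet at $i$ arbitrarily. Since the computations in \cite{GASRONSIL1} for degrees $\le2$ only involve coefficients up to the quadratic jet, the same Jacobian rank, and hence the same count $n^++n^-$, is achieved.
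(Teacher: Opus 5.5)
There is a genuine gap, in two places. First, your degree bookkeeping for the transfer is wrong, and this matters. For $\deg F\le 2$ the pushforward $G(w)=\tfrac{(iw+1)^2}{2}F\bigl(\tfrac{w+i}{iw+1}\bigr)$ is a polynomial of degree at most $2$, because the factor $(iw+1)^2$ clears the denominator. So no rational fields occur when $n^\pm\in\{1,2\}$. What actually fails is that degree is \emph{not} preserved. Writing $G(w)=g_0+g_1w+g_2w^2$, the $z^2$-coefficient of $F$ is $-\tfrac12 G(i)$ (recall $i=\phi(\infty)$). Hence fields of degree $\le 1$ on the circle side correspond exactly to quadratic fields on $\mathbb{R}$ with $G(i)=0$. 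For instance, $F(z)=\alpha(z-i)$ becomes $\alpha w(1+iw)$, whose $w^2$-coefficient is rigidly tied to the linear one.

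Consequently, pulling back the degree-$n^\pm$ line families of \cite{GASRONSIL1} lands in the wrong class whenever a side is linear. A linear field $(i+\lambda)w$ pulls back to the quadratic $\tfrac{i+\lambda}{2}(iz-1)(i-z)$. Your claim that the jet at $0$ can be prescribed ``with the same rank as on the line'' is also false for $n=1$. More fundamentally, a rank count for the parameter-to-jet map does not control the Jacobian of $(V_1,\dots,V_m)$. You must compute the Lyapunov quantities of the actual constrained family. That is what Propositions~\ref{imp_prop31}--\ref{imp_prop3} do: all three cases become quadratic--quadratic systems on $\mathbb{R}$, with $G(i)=0$ on the linear sides.

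Second, your counting mechanism cannot work as set up. You want all $m=n^++n^-$ cycles from Proposition~\ref{aux_lemma}, which requires a focus of order $m+1$ with $m$ independent parameters. For $(n^+,n^-)=(1,1)$ this is impossible. Up to positive time rescalings the class is $G^\pm=(i+\lambda^\pm)w(1+iw)$. Then $V_1=0$ forces $\lambda^-=-\lambda^+=-\lambda$, and $V_2=2e^{\pi\lambda}(1+e^{\pi\lambda})\lambda$ vanishes only at $\lambda=0$. There both sides equal $iw-w^2$ and the origin is a center, so the Hopf-type route yields at most one cycle in this case.

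The missing ingredient is a second, independent mechanism that Proposition~\ref{aux_lemma} does not cover, since it only treats perturbations keeping the origin an equilibrium of both sides. Your parenthetical about $V_1$ playing a ``sliding-shift role'' conflates the two: $V_1=e^{(\lambda^++\lambda^-)\pi}-1$ is the trace term. The extra cycle comes from the sliding bifurcation of Proposition~\ref{prop_sliding_GAS}: add $(i+\lambda^+)d$ to $G^-$, with $d$ small and of sign opposite to $\lambda^++\lambda^-$. The paper obtains weak foci of orders $2,3,4$ (hence $1,2,3$ cycles) and adds one sliding cycle in each case. To keep a linear side linear on $\mathbb{S}^1$, the $w^2$-coefficient of $G^-$ must be shifted by the same $(i+\lambda^+)d$ so that $G^-(i)=0$ persists; this is how the explicit $z$-systems are produced.

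Finally, $z\mapsto 1/\bar z$ sends $\alpha(z-i)$ to $-\bar\alpha(w+iw^2)$, so it does not exchange the $(1,2)$ and $(2,1)$ classes either. The explicit family of Proposition~\ref{imp_prop32} realizes only the case with the quadratic field outside and the linear one inside.
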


\subsubsection{Maximum number of limit cycles of a piecewise anti-holomorphic system}

We now consider piecewise antiholomorphic systems of the form \eqref{ch4:eq111} with $F^\pm(z)=\overline{f^\pm(z)}$, where $f^\pm$ are holomorphic polynomials. These systems possess a Hamiltonian structure which imposes strong constraints on their dynamics. In particular, as illustrated in Figure \ref{fig:equilibria_s1} (right), any nondegenerate equilibrium point of such a system that lies on the switching manifold $\mathbb{S}^1$ is necessarily a saddle. More generally, even when the equilibrium is not located on $\mathbb{S}^1$, the Hamiltonian nature of the system forces the phase portrait to exhibit a symmetric structure that limits the number of possible limit cycles.

By analyzing the half-return maps via resultants we obtain the following upper bounds on the number of limit cycles for low-degree antiholomorphic systems, regardless of whether the equilibrium lies on the switching manifold or not.

\begin{mtheorem}\label{teo11}
    Consider a piecewise antiholomorphic system \eqref{ch4:eq111} with $F^\pm(z)=\overline{f^\pm(z)}$, where $f^\pm$ are holomorphic polynomials. The following statements hold:
    \begin{itemize}
        \item[(a)] If $f^\pm$ are linear, then system \eqref{ch4:eq111} has at most $3$ limit cycles.
        \item[(b)] If $f^\pm$ are quadratic, then system \eqref{ch4:eq111} has at most $10$ limit cycles.
    \end{itemize}
\end{mtheorem}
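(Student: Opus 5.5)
The plan is to use the Hamiltonian structure. For a holomorphic $f$, the system $\dot z=\overline{f(z)}$ is Hamiltonian. Writing $F'=f$ and $\dot x + i\dot y=\overline{f}$, we get
\[
\dot x=\operatorname{Re} f=\partial_y \operatorname{Im}F,\qquad \dot y=-\operatorname{Im} f=-\partial_x\operatorname{Im}F .
\]
Hence $H^\pm=\operatorname{Im}F^\pm(z)$, with $(F^\pm)'=f^\pm$, are first integrals of the two components.

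These first integrals are harmonic polynomials of degree $n^\pm+1$, that is, of degree $2$ in case (a) and $3$ in case (b). They are free of $|z|$-dependence, so they behave like real polynomials in $(x,y)$ of those degrees. A crossing periodic orbit meeting $\mathbb{S}^1$ at two points $p,q$ must satisfy
\[
H^+(p)=H^+(q),\qquad H^-(p)=H^-(q),
\]
together with the condition that the arcs of the level curves joining $p$ and $q$ lie in the correct regions. A limit cycle corresponds to an isolated solution pair $(p,q)$ with $p\neq q$. A sliding-free crossing cycle can cross $\mathbb{S}^1$ more than twice, but I would first treat two-crossing cycles and then argue that the same counting bounds the general case.

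The key steps are as follows.
\begin{enumerate}
\item Parametrize $\mathbb{S}^1$ rationally, for instance with $p=\frac{1-s^2+2is}{1+s^2}$ and $q$ likewise with parameter $t$. Alternatively, push forward by the Möbius map $\phi$ of \eqref{mobius_map1}, which by Theorem~\ref{thm:mobius_limit_cycle} preserves limit cycles. Note, however, that $\phi$ turns an antiholomorphic field into a non-polynomial one, so working directly on the circle with the rational parametrization is cleaner.
\item Form $G^\pm(s,t)=\big(H^\pm(p(s))-H^\pm(q(t))\big)/(s-t)$, clearing denominators $(1+s^2)^{n^\pm+1}(1+t^2)^{n^\pm+1}$. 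This gives polynomials whose degrees in each variable are bounded by $2(n^\pm+1)-1$.
\item Count the common zeros of $G^+$ and $G^-$. I would use the resultant with respect to $t$ to get a one-variable polynomial in $s$, or equivalently a Bézout bound. Then I would remove spurious factors: the factors $1+s^2$, the diagonal, and the symmetric duplicate $(s,t)\leftrightarrow(t,s)$, since each cycle gives two ordered solutions.
\item Divide the resulting count by $2$ for the symmetry. Use the Hamiltonian symmetry of each half (each $H^\pm$ is invariant under the reflection induced by the equilibrium) to identify further redundant solutions. The targets are $3$ for $n^\pm=1$ and $10$ for $n^\pm=2$.
\end{enumerate}

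The main obstacle is step 3 combined with step 4: obtaining a sharp count instead of the crude Bézout bound. The crude bound is roughly $(2n+1)^2$, giving $9$ and $25$, which after halving gives $4$ and $12$. These exceed the targets $3$ and $10$, so additional degenerate solutions must be identified.

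The first thing I would try is exploiting the mixed-degree (Newton polytope / BKK) structure of $G^\pm$, whose bidegrees are $(2n+1,2n+1)$ but with sparse monomial support after symmetrization. I would pass to the symmetric variables $\sigma=s+t$ and $\pi=st$, which automatically quotients out the $(s,t)\leftrightarrow(t,s)$ duplication. I would then compute the resultant in $\pi$ and bound the degree of the resulting polynomial in $\sigma$, discarding roots at infinity, that is, points of $\mathbb{S}^1$ that are fixed points or tangencies.

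For case (a), linear $f^\pm$ give quadratic $H^\pm$, and everything can be done by hand to check the bound $3$. For case (b), I expect to need a computer algebra resultant computation, arguing that the leading coefficients generically force a degree drop to $10$.
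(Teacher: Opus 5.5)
Your plan is essentially the paper's proof: Hamiltonians $H^\pm=\operatorname{Im}P^\pm$ with $(P^\pm)'=f^\pm$, the rational parametrization of $\mathbb{S}^1$, the divided differences of bidegree $(2n+1,2n+1)$, a resultant in one variable with the non-real factor $1+s^2$ discarded, and halving by the $(s,t)\leftrightarrow(t,s)$ symmetry. The obstacle you anticipate is not real, and no BKK or symmetric-variable refinement is needed: the resultant has degree $2(2n+1)^2$ ($18$, resp.\ $50$) and factors identically in the coefficients as $(1+s^2)^{6}$ times a polynomial of degree at most $6$, resp.\ $(1+s^2)^{15}$ times one of degree at most $20$, so removing that factor (the spurious common zeros lie over $s=\pm i$, not at $s=\infty$ or at tangencies) already gives $6/2=3$ and $20/2=10$.
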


Recall that Theorem \ref{teo11} applies to general piecewise antiholomorphic systems, without requiring that an equilibrium point lies on the switching manifold $\mathbb{S}^1$. Figure \ref{fig:equilibria_s1} (right) serves to illustrate the typical local behavior near an equilibrium on $\mathbb{S}^1$ when it exists, but the upper bounds hold in all cases. The proof relies on the Hamiltonian structure and resultant computations, which are independent of the location of equilibria relative to the discontinuity.

\subsubsection{PWHS in which each component is in normal form and the switching curve is $\mathbb{S}^1$}
One of the main results of this work is a rigidity theorem concerning the
existence of crossing limit cycles for system~\eqref{ch4:eq111}. 
We consider the case in which the holomorphic singularity is located at the
origin.

Using the holomorphic classification of singularities at the origin we show
that whenever the vector fields on each side of the switching circle admit one
of the classical normal forms, the corresponding piecewise system cannot
generate crossing limit cycles.

This establishes a direct link between the local analytic structure of the
holomorphic singularities at the origin and the global dynamics of the
piecewise system.

   \begin{mtheorem}\label{teod}
Consider the piecewise complex system \eqref{ch4:eq111} where $F^+$ and $F^-$ 
are holomorphic functions defined in punctured neighborhoods of the origin. 
Suppose that each $F^\pm$ satisfies one of the following conditions:

\begin{itemize}
  \item $F^\pm(0) \neq 0$ (regular case);
  \item $F^\pm$ has a simple zero at the origin;
  \item $F^\pm$ has a zero of order $n \ge 2$ at the origin;
  \item $F^\pm$ has a pole at the origin.
\end{itemize}

If $F^\pm$ has a zero of order $n \ge 2$ with nonzero residue, assume 
additionally that $\operatorname{Res}(1/F^\pm, 0) \in \mathbb{R}$ and 
$|\operatorname{Res}(1/F^\pm, 0)| \ge 1$.

Then the system admits no crossing limit cycles.
\end{mtheorem}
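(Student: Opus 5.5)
The plan is to work with the normal forms literally, i.e. to assume $F^\pm$ is exactly one of the classical holomorphic normal forms at the origin. These are $F=1$ (regular), $F=\lambda z$ (simple zero), $F=z^n/(1+cz^{n-1})$ (zero of order $n\ge 2$, with $c=\operatorname{Res}(1/F,0)$), and $F=z^{-m}$ (pole of order $m$). I would take a rotation of the regular field to be absorbed by the rotational symmetry of $\mathbb{S}^1$. The key tool is the complex time $T^\pm(z)=\int dz/F^\pm(z)$. Along trajectories of $\dot z=F^\pm$ we have $\frac{d}{dt}T^\pm=1$, so $H^\pm=\operatorname{Im}T^\pm$ is a (possibly multivalued) first integral. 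A crossing periodic orbit is then encoded by two points $e^{i\theta_1}\neq e^{i\theta_2}$ of $\mathbb{S}^1$ that lie on a common level of $H^+$ (outer arc) and on a common level of $H^-$ (inner arc), together with the Filippov crossing and orientation conditions. So I would restrict everything to the circle and study the functions
\[
g^\pm(\theta)=\operatorname{Im}T^\pm(e^{i\theta}).
\]

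Next I will compute $g^\pm$ case by case:
\begin{itemize}
\item Regular case: $g=\sin\theta$.
\item Simple zero: $T=\lambda^{-1}\log z$ gives $g=a\theta/|\lambda|^2$ with $a=\operatorname{Re}\lambda$.
\item Pole: $g=\sin((m+1)\theta)/(m+1)$.
\item Zero of order $n$ with $c=0$: $g$ is a multiple of $\sin((n-1)\theta)$.
\item Zero of order $n$ with $c\ne 0$: since $c\in\mathbb{R}$, $g=\sin((n-1)\theta)/(n-1)+c\,\theta$.
\end{itemize}
From these formulas I get a dichotomy.

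\emph{Monotone case (no return).} If $a\neq0$ in the linear case, or if $|c|\ge1$, then $g'$ has constant sign: $g'=\cos((n-1)\theta)+c$ does not change sign when $|c|\ge 1$, with at most isolated zeros. Hence no orbit of that component leaves $\mathbb{S}^1$ and returns to it, so there is no crossing cycle. This is exactly where the hypothesis $\operatorname{Res}(1/F^\pm,0)\in\mathbb{R}$, $|\operatorname{Res}|\ge1$ enters. If $a=0$, the linear field is a center whose orbits are circles centered at $0$, so $\mathbb{S}^1$ itself is an orbit and there is no crossing.

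\emph{Symmetric case.} In the remaining cases $g$ is a trigonometric function $\sin(k\theta)$. Each of its level sets on an arc between consecutive critical points is paired by a reflection $\theta\mapsto \theta^*-\theta$ of the circle, where $\theta^*$ is determined by the critical point, i.e.\ the axis of the relevant sector. So each half-return map $P^\pm$ is, where defined, the restriction of a reflection $\sigma^\pm$ of $\mathbb{S}^1$. I would double-check this by verifying that the fields $1$, $z^n$ and $z^{-m}$ are equivariant under $z\mapsto e^{i\alpha}\bar z$ for suitable $\alpha$, so that the reflection symmetry is genuinely dynamical.

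The full return map is then $P^-\circ P^+=\sigma^-\circ\sigma^+$, a composition of two reflections of the circle, i.e.\ a rotation. A rotation either has no fixed points or is the identity. In the latter case every nearby crossing orbit is periodic, giving a continuum (a period annulus), so none is isolated. In either case there is no crossing limit cycle. Mixed cases, where one side is monotone and the other symmetric, are already covered by the monotone side.

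I expect the main obstacle to be the multi-sector situation ($n\ge3$ or poles of higher order). There the return maps are only \emph{locally} reflections, different sectors use different axes, and an orbit could in principle cross $\mathbb{S}^1$ several times or be concatenated through sliding regions. To handle this I would argue sector by sector. First, check that a crossing orbit of each field joins two points in the same sector, since $g$ takes each value once on each monotone arc and orientation forces the partner point into the adjacent arc. Second, use the analyticity of $P^-\circ P^+$: it is a piecewise rotation on each connected domain, so an isolated fixed point is impossible. Finally, I would need to check carefully that the Filippov crossing conditions, namely the signs of $\operatorname{Re}(\bar z F^\pm(z))$ on $\mathbb{S}^1$, are compatible with this pairing.
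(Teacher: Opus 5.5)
Your proposal is correct and is essentially the paper's proof. You work with the literal normal forms and restrict a first integral to $\mathbb{S}^1$. You rule out returns for $\lambda z$ and for $z^n/(1+\gamma z^{n-1})$ with $|\gamma|\ge 1$ by a one-signed transversality condition: on $\mathbb{S}^1$ your $g'$ equals $\dot r/|F|^2$, so this is exactly the paper's first-integral/radial-velocity criterion. You treat $1$, $z^n$, $z^{-m}$ through the reflection symmetry of the level sets of $\sin(k\theta)$. Your way of closing the symmetric case (each half-return map is locally a reflection, so any return map is locally a rotation, and a fixed point forces the identity) is tidier and more robust than the center construction in Proposition~\ref{prop:monomial}, since it also covers cycles with several crossings. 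Your explicit restriction to literal normal forms is the right reading rather than a gap: a local conformal conjugacy at $0$ does not preserve $\mathbb{S}^1$, and the fields of Proposition~\ref{imp_prop31} are regular at $0$ yet produce a crossing limit cycle.
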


\subsubsection{Algebraic limit cycles for PWHS separated by $\mathbb{S}^1$}

A classical notion in the qualitative theory of polynomial differential systems is that of an \emph{algebraic limit cycle}. Informally a limit cycle is said to be algebraic if it is contained (except possibly for a finite set of points) in the zero set of a polynomial. More precisely, for a smooth planar polynomial vector field, a limit cycle \(\Gamma\) is algebraic if there exists an irreducible polynomial \(p(x,y)\) such that \(\Gamma \subset \{(x,y) \in \mathbb{R}^2 \mid p(x,y)=0\}\). The study of such objects has a long history, particularly for quadratic systems, and is intimately related to Darboux integrability theory.

In the piecewise setting the definition must accommodate the fact that the limit cycle may be composed of arcs coming from different vector fields on each side of the switching manifold. Following the ideas introduced in \cite{doi:10.1142/S0218127418500396}, we say that an isolated periodic orbit \(\Gamma\) of a piecewise holomorphic system \eqref{ch4:eq111} with switching manifold \(\Sigma = \mathbb{S}^1\) is an \emph{algebraic limit cycle} if there exist irreducible polynomials \(p^-\) and \(p^+\) (which may coincide) such that
\[
\Gamma \cap (\Sigma^+ \setminus \Sigma) \subset \{ (x,y) \in \mathbb{R}^2 \mid p^+(x,y)=0 \}
\]
and
\[
\Gamma \cap (\Sigma^- \setminus \Sigma) \subset \{ (x,y) \in \mathbb{R}^2 \mid p^-(x,y)=0 \},
\]
 If \(\deg(p^-)=m\) and \(\deg(p^+)=n\), we say that \(\Gamma\) has \emph{bidegree} \((m,n)\) and write \(\Gamma = \{p^-, p^+\}\) for brevity.

In the piecewise linear context fundamental contributions to the study of algebraic limit cycles have been made by Buzzi, Gasull and Torregrosa \cite{doi:10.1142/S0218127418500396}. Among their results they proved that piecewise linear differential systems can exhibit explicit hyperbolic algebraic limit cycles, including examples with two such cycles in the same system. Moreover they showed that for any integers \(m, n \ge 2\) there exist piecewise linear systems possessing a hyperbolic algebraic limit cycle of bidegree \((m, n)\); that is algebraic limit cycles of arbitrarily high degree can be realized.

In this work we extend this concept to piecewise holomorphic systems with switching manifold \(\mathbb{S}^1\) and investigate how algebraic limit cycles behave under Möbius transformations. A fundamental observation is that the integrability properties of holomorphic vector fields, combined with the flexibility of the piecewise definition, allow to construct explicit examples of algebraic limit cycles in the circular setting. This is achieved by first constructing a limit cycle that is an algebraic curve (a circle) in a straight-line switching system and then transferring it via the Möbius transformation \eqref{mobius_map1} to a PWHS with \(\mathbb{S}^1\) as the discontinuity. The following result establishes that such algebraic limit cycles do indeed exist.

\begin{mtheorem}\label{thm:alg_s1}
There exist piecewise holomorphic systems separated by \(\mathbb{S}^1\) that possess algebraic limit cycles.
\end{mtheorem}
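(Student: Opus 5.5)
The plan is the construction sketched just before the statement. First we build a piecewise holomorphic system with switching line $\mathbb{R}$ that has a hyperbolic limit cycle lying on circles on each side. Then we pull it back to the $\mathbb{S}^1$ setting with the Möbius map $\phi$ of \eqref{mobius_map1}.

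\textbf{Model on the line.} Take as building blocks the linear holomorphic centers $\dot w = i(w-c^\pm)$ with real centers $c^\pm$. Their orbits are circles centred at $c^\pm$, which are algebraic of degree $2$. To get an isolated crossing cycle, the two half-circles must close up in only one way. A convenient choice is
\[
\dot w = G^+(w)=i\alpha^+ (w-c^+)\ (\operatorname{Im} w>0),\qquad \dot w = G^-(w)= i\alpha^-(w-c^-)+\beta^-\ (\operatorname{Im}w<0),
\]
where $G^-$ is a linear field whose equilibrium $c^-+i\beta^-/\alpha^-$ is off the real axis, so its orbits are circles not centred on $\mathbb{R}$. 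The upper half-return map is the reflection $x\mapsto 2c^+-x$. The lower one sends $x$ to the other intersection of the circle through $x$ centred at $q=a+ib$ with $\mathbb{R}$, which is $x\mapsto 2a-x$. This is still an isometry, so no isolated cycle appears.

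The holomorphic linear choice is therefore too rigid. Instead we use an integrable holomorphic field whose orbits are circles but whose transit map is nonlinear. The natural candidate is $\dot w = \tfrac12(1+w^2)$, or in general $\dot w = \lambda(w-p)(w-\bar p)$ with $p\notin\mathbb{R}$. Its trajectories are the Apollonius circles $|w-p|/|w-\bar p|=\text{const}$. These form a coaxal family, which is symmetric about $\mathbb{R}$ only when the poles are conjugate. We break the symmetry by placing the two foci at $p,q$ with $q\ne\bar p$, using $\dot w=\lambda(w-p)(w-q)$ with $\lambda$ chosen so that the equilibrium $p$ is a center; this requires $\lambda(p-q)\in i\mathbb{R}$. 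The orbits are then the circles $|w-p|=k|w-q|$, and the half-return map on $\mathbb{R}$ is an algebraic involution $x\mapsto \sigma^\pm(x)$, in general a Möbius involution of $\mathbb{R}$.

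\textbf{Finding the cycle.} Crossing cycles are the solutions of $\sigma^+(x)=\sigma^-(x)$ with the appropriate orientation. Since both maps are real Möbius involutions, the difference $\sigma^+-\sigma^-$ has numerator quadratic in $x$. Choosing parameters so that this quadratic has a simple root $x_0$ in the admissible crossing region gives a hyperbolic cycle: the derivative of the full return map $\sigma^-\circ\sigma^+$ at the fixed point is not $1$. The cycle consists of two circular arcs, so it is algebraic of bidegree $(2,2)$. I will check this with an explicit numerical choice, for example $G^+ = \tfrac12(1+w^2)$ and $G^-$ built from foci $\pm i + \delta$. This is the main obstacle. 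The composition of two involutions fixing a conjugate-pair configuration can easily be the identity (a center) or have no admissible fixed points, so the parameters must make the two involutions distinct while keeping the crossing condition valid on both half-arcs. Filippov crossing requires the vertical components on both sides to have the same sign at the two intersection points.

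\textbf{Transfer to $\mathbb{S}^1$.} Define $F^\pm=(\phi^{-1})_*G^\pm$, that is $F^\pm(z)=G^\pm(\phi(z))/\phi'(z)$. These are holomorphic, and rational; polynomiality is not demanded by the statement. Here $\phi$ sends the regions $|z|\lessgtr1$ to the half-planes and $\mathbb{S}^1$ to $\mathbb{R}$. By the Möbius equivalence (Theorem~\ref{thm:mobius_limit_cycle}), the cycle maps to a hyperbolic limit cycle of \eqref{ch4:eq111}. Möbius maps send circles to circles or lines. If $\phi^{-1}$ sends a circle through the pole $\phi(z)\to\infty$, namely the image of $z=-i$, to a line, that line is still algebraic. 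Hence both arcs lie on zero sets of irreducible real quadrics or lines, and the cycle is algebraic.
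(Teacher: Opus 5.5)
\textbf{There is a genuine gap.} Your reduction is correct as far as it goes. With circle pencils on both sides the half-return maps are real Möbius involutions, crossing cycles are fixed points of $\sigma^-\circ\sigma^+$, and a simple fixed point has multiplier $\neq1$. This is the same mechanism the paper uses: it composes $u\mapsto -u$, coming from the linear center $\dot w=iw$, with the involution $v=(-3u-4)/(u+3)$ of a quadratic field whose orbits are the circles $x^2+y^2-4=h\bigl(\tfrac{1}{3}x-\tfrac{3}{2}y+1\bigr)$, and gets $\pi(u)=(3u-4)/(3-u)$ with $\pi'(2)=5$.

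The problem is that the statement is purely existential, so exhibiting one admissible system is its entire content. That is exactly the step you postpone (``I will check this with an explicit numerical choice''). Worse, the choice you indicate fails. In the $w$-plane, $G^+(w)=\tfrac12(1+w^2)$ is real on $\mathbb{R}$, so the switching line is itself an orbit of $G^+$: it is the $k=1$ Apollonius circle, i.e.\ the radical axis of the conjugate foci $\pm i$. Thus $G^+$ is tangent to $\mathbb{R}$ at every point, and there are no crossing points and no half-return map. Foci $\pm i+\delta$ with $\delta\in\mathbb{R}$ are again conjugate and degenerate in the same way. You seem to have carried the $z$-field $\tfrac12(1+z^2)$, which $\phi$ turns into $\dot w=iw$, over to the $w$-plane. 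Conjugate foci do not merely make the pencil symmetric; they destroy every crossing.

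What is missing is a criterion that guarantees an admissible hyperbolic fixed point.
Each $\sigma^\pm$ is an orientation-reversing involution of $\mathbb{R}\cup\{\infty\}$ whose two fixed points are the fold points, i.e.\ the zeros of $\operatorname{Im}G^\pm$ on $\mathbb{R}$. The composition $\sigma^-\circ\sigma^+$ is hyperbolic exactly when the two fold pairs are distinct and do not interleave. Its fixed points $x_0,x_1$ are then the unique pair exchanged by both involutions. The Filippov crossing condition with compatible orientation can always be met by replacing $G^-$ with $-G^-$, which keeps the orbits and reverses time.

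For example, foci $i$ and $e^{-i\pi/4}$ give $\sigma^+(x)=1/x$, with folds $\pm1$. Translating both foci by $3$ gives $\sigma^-(x)=3+1/(x-3)$, with folds $2$ and $4$. The cycle then crosses $\mathbb{R}$ at $(3\pm\sqrt5)/2$, and $\sigma^-\circ\sigma^+(x)=(3-8x)/(1-3x)$ has multiplier $\bigl(\tfrac{7+3\sqrt5}{2}\bigr)^2\neq1$ at $(3-\sqrt5)/2$.

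Your last paragraph also needs correcting. The point the cycle must avoid is $w=i$, the pole of $\phi^{-1}$, not $\phi(-i)=\infty$, which a bounded cycle never meets. A cycle through $w=i$ would pull back to an unbounded curve that is not a periodic orbit in $\mathbb{C}$ (cf.\ Remark~\ref{rem:mobius_pole}), so ``the line is still algebraic'' does not rescue it. With a focus at $i$, as in the example above, the upper arc avoids $w=i$ automatically.
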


The proof is constructive and appears in Section~\ref{sec:alg_cycles}. It relies on the invariance of algebraic curves under Möbius transformations (Proposition~\ref{prop:alg_mobius}) and on an explicit example in the straight-line setting inspired by the techniques developed in \cite{doi:10.1142/S0218127418500396}. This result highlights a key difference with smooth holomorphic systems, which cannot exhibit limit cycles at all, and shows that the presence of the switching manifold \(\mathbb{S}^1\) enables the emergence of this type of structured periodic behavior. In light of the results of Buzzi, Gasull and Torregrosa it is natural to ask whether algebraic limit cycles of arbitrarily high bidegree can also be realized in the PWHS setting, a question that we leave for future investigation.

The paper is organized as follows. Section~\ref{sec:Preliminaries} recalls the necessary background on averaging theory, Lyapunov quantities, local integrability of holomorphic systems, and other auxiliary results. Section~\ref{sec:Mobius} studies Möbius transformations and establishes the equivalence between circular and straight-line switching manifolds. Finally, section~\ref{sec:proofs} contains the proofs of our main results: Theorems~\ref{teo1}, \ref{teonew}, \ref{teo11}, \ref{teod}, and \ref{thm:alg_s1}.

\section{Preliminaries}\label{sec:Preliminaries}
This section presents basic results organized into four subsections.

\subsection{The Averaging Method}\label{aver_metd}

We summarize key aspects of averaging theory for piecewise smooth systems in polar coordinates. Consider systems of the form
\begin{equation}\label{polar_sys}
\frac{dr}{d\theta} = 
\begin{cases}
F^+(\theta,r,\epsilon) & \text{if } 0 \leq \theta \leq \pi, \\
F^-(\theta,r,\epsilon) & \text{if } \pi \leq \theta \leq 2\pi,
\end{cases}
\end{equation}
where 
\begin{equation}\label{expansion}
F^\pm(\theta,r,\epsilon) = \sum_{i=1}^{k} \epsilon^i F_i^\pm(\theta,r) + \epsilon^{k+1} R^\pm(\theta,r,\epsilon),
\end{equation}
with $\theta \in \mathbb{S}^1$, $r > 0$, and $\epsilon > 0$ sufficiently small. Complete proofs can be found in \cite{MR3729598}.

Define the auxiliary functions
\begin{equation}\label{M_functions}
M_j^\pm(r) = \frac{1}{j!} y_j(\pm\pi,r), \quad j = 1,2,
\end{equation}
where
\begin{align*}
y_1^\pm(t,r) &= \int_0^{t} F_1^\pm(\theta,r) d\theta, \\
y_2^\pm(t,r) &= \int_0^{t} \left[2F_2^\pm(\theta,r) + 2\partial_rF_1^\pm(\theta,r)y_1^\pm(\theta,r)\right] d\theta.
\end{align*}

The \emph{averaged function of order $j$} is given by:
\begin{equation}
M_j(r) = M_j^+(r) - M_j^-(r).
\end{equation}

\begin{proposition}\label{prop3} 
Let $M_l$ ($l \in \{1,2\}$) be the first non-vanishing averaged function. For $\epsilon$ sufficiently small, each simple zero $r_0$ of $M_l$ corresponds to a hyperbolic limit cycle of system \eqref{polar_sys} that approaches $r_0$ as $\epsilon \to 0$.
\end{proposition}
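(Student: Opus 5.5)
The plan is to reduce Proposition~\ref{prop3} to the zeros of a displacement function, built from the two half-return maps, and then apply the implicit function theorem.

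\emph{Step 1: the half-flows.} For $\epsilon$ small, let $r^\pm(\theta,r_0,\epsilon)$ be the solution of the $\pm$ branch of \eqref{polar_sys} with $r^\pm(0,r_0,\epsilon)=r_0$. The $+$ branch is integrated forward on $[0,\pi]$; the $-$ branch is integrated backward on $[-\pi,0]$, which is the same as $[\pi,2\pi]$ by $2\pi$-periodicity. Both branches are smooth in $(\theta,r_0,\epsilon)$. Since the right-hand side is $O(\epsilon)$, the solutions exist on these compact intervals for every $r_0$ in a compact subset of $(0,\infty)$.

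\emph{Step 2: the Taylor expansion.} I will expand $r^\pm(\theta,r_0,\epsilon)=r_0+\epsilon y_1^\pm(\theta,r_0)+\frac{\epsilon^2}{2}y_2^\pm(\theta,r_0)+O(\epsilon^3)$. Substituting into the integral equation
\[
r^\pm(\theta)=r_0+\int_0^\theta F^\pm(s,r^\pm(s),\epsilon)\,ds
\]
and matching powers of $\epsilon$, with a Taylor expansion of $F_1^\pm$ in $r$ at the second step, gives exactly the recursive formulas defining $y_1^\pm$ and $y_2^\pm$.

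\emph{Step 3: the displacement function.} A crossing periodic orbit through $(\theta,r)=(0,r_0)$ corresponds to a zero of
\[
\Delta(r_0,\epsilon)=r^+(\pi,r_0,\epsilon)-r^-(-\pi,r_0,\epsilon).
\]
Indeed, both half-orbits then meet at the same point of the ray $\theta=\pi$, and gluing them gives a closed orbit. By Step~2,
\[
\Delta(r_0,\epsilon)=\epsilon M_1(r_0)+\epsilon^2 M_2(r_0)+O(\epsilon^3).
\]

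\emph{Step 4: the implicit function theorem.} If $M_l$ is the first averaged function not identically zero, set $\Delta=\epsilon^l g(r_0,\epsilon)$ with $g(r_0,0)=M_l(r_0)$. At a simple zero $r_0^*$ of $M_l$ we have $\partial_{r_0}g(r_0^*,0)=M_l'(r_0^*)\neq0$. The implicit function theorem then gives a unique smooth branch $r_0(\epsilon)\to r_0^*$ with $g(r_0(\epsilon),\epsilon)=0$, and locally there are no other zeros. So the corresponding periodic orbit is isolated, that is, a limit cycle.

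\emph{Step 5: hyperbolicity.} Write $P^\pm(r_0)=r^\pm(\pm\pi,r_0,\epsilon)$. The full return map is $P=(P^-)^{-1}\circ P^+$. At a fixed point, $P'=(P^+)'/(P^-)'$, and
\[
(P^+)'-(P^-)'=\epsilon^l M_l'(r_0^*)+O(\epsilon^{l+1})\neq0,
\]
while $(P^-)'=1+O(\epsilon)$. Hence $P'\neq1$, so the limit cycle is hyperbolic.

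The main obstacle is the bookkeeping in Step~2. One must use the correct Taylor expansion of $F_1^\pm(\theta,r^\pm)$ in $r$ to get the $2\partial_rF_1^\pm\,y_1^\pm$ term. One must also control the remainder uniformly on compact sets of $r_0$, so that $g$ is $C^1$ up to $\epsilon=0$. This follows from smooth dependence of solutions on parameters, given that $F^\pm$ and $R^\pm$ are smooth.
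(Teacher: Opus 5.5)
Your proposal is correct and takes the same route as the result the paper relies on; the paper itself gives no proof and defers to \cite[Theorem 1]{MR3729598}. You build the displacement $\Delta(r_0,\epsilon)=r^+(\pi,r_0,\epsilon)-r^-(-\pi,r_0,\epsilon)$ from the forward $+$ and backward $-$ half-solutions, expand it in $\epsilon$ to recover exactly the recursive $y_1^\pm,y_2^\pm$ and hence $\epsilon M_1+\epsilon^2M_2+O(\epsilon^3)$, apply the implicit function theorem to $\Delta/\epsilon^l$ at a simple zero, and then correctly obtain hyperbolicity in Step~5 from $P'(r_0)-1=\partial_{r_0}\Delta(r_0,\epsilon)/(P^-)'(r_0)$ with $\partial_{r_0}\Delta=\epsilon^l M_l'(r_0^*)+O(\epsilon^{l+1})\neq0$.
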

The proof appears in \cite[Theorem 1]{MR3729598}.

\subsection{Lyapunov Approach for Piecewise Holomorphic Systems}\label{lya_metd}

We summarize the Lyapunov method for analyzing limit cycles in piecewise holomorphic systems. Consider the system
\begin{equation}\label{pwhs_eq_1}
\begin{cases}
\dot{w}=G^{+}(w) & \text{when } \operatorname{Im}(w)\geq0, \\ 
\dot{w}=G^{-}(w) & \text{when } \operatorname{Im}(w)\leq0,
\end{cases}
\end{equation}
where $G^\pm(w)=\sum_{k=1}^\infty G_k^\pm(w)$ are holomorphic functions with $G_1^\pm(w)=(i+\lambda^\pm)w$. To analyze the existence of limit cycles for system \eqref{pwhs_eq_1}, it suffices to study the zeros of the displacement function $\Delta_1(r) = (g^-)^{-1}(r) - g^+(r)$, where $g^\pm$ are the half-return maps associated with $\dot{w} = G^\pm(w)$. These maps are locally well-defined when the origin is a weak focus for both $\dot{w} = G^+(w)$ and $\dot{w} = G^-(w)$, and lies on the discontinuity line.
\begin{definition}
The origin is a \emph{weak focus of order $k$} for \eqref{pwhs_eq_1} if the displacement function $\Delta(r)=g^+(g^-(r))-r$ satisfies
$$\Delta(r)=V_k r^k + \mathcal{O}(r^{k+1}), \quad V_k\neq 0.$$
The coefficient $V_k$ is called the \emph{$k$-th Lyapunov quantity}.
\end{definition}

The following results, proved in \cite[Theorem D]{GASRONSIL1}, provide tools for analyzing limit cycles.

\begin{theorem}[Lyapunov Quantities]\label{teob}
For system \eqref{pwhs_eq_1} the first five Lyapunov quantities $V_1,\ldots,V_5$ can be computed as:
\begin{itemize}
\item[(i)] $V_1=e^{(\lambda^++\lambda^-)\pi}-1$; \vspace{0.3cm}
\item[(ii)] $V_2=\omega_2^+(\pi)+\omega_2^-(\pi)e^{3\lambda^+\pi}$;\vspace{0.3cm}
\item[(iii)] $V_3=e^{\lambda^+\pi}\omega_3^+(\pi)-2(\omega_2^+(\pi))^2+\omega_3^-(\pi)e^{5\lambda^+\pi}$;\vspace{0.3cm}
\item[(iv)] $V_4=e^{2\lambda^+\pi}\omega_4^+(\pi)-5e^{\lambda^+\pi}\omega_2^+(\pi)\omega_3^+(\pi)+5(\omega_2^+(\pi))^3+\omega_4^-(\pi)e^{7\lambda^+\pi};$\vspace{0.3cm}
 \item[(v)] $\!\!\!\begin{array}{rcl}
 V_5\!\!\!\!&=&\!\!\!\!e^{3\lambda^+\pi}\omega_5^+(\pi)+21e^{\lambda^+\pi}(\omega_2^+(\pi))^2\omega_3^+(\pi)-14(\omega_2^+(\pi))^4\vspace{0.3cm}\\
 & &\!\!\!\!-3e^{2\lambda^+\pi}(\omega_3^+(\pi))^2-6e^{2\lambda^+\pi}\omega_2^+(\pi)\omega_4^+(\pi)+\omega_5^-(\pi)e^{9\lambda^+\pi};
           \end{array}$
\end{itemize}
where $\omega_i^\pm(\pi)$ are explicit functions of the coefficients of $F^\pm$ (full expressions in \cite{GASRONSIL1}).
\end{theorem}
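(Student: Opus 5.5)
The plan is to compute the half-return maps directly as power series in the initial radius, and then expand the composition $g^+\circ g^-$ order by order. Because the linear parts are $(i+\lambda^\pm)w$, the origin is monodromic for each of $\dot w=G^\pm(w)$. Writing $w=re^{i\theta}$, the angular velocity is $\dot\theta=1+\mathcal{O}(r)$, so near $r=0$ we may use $\theta$ as the independent variable. This gives
\[
\frac{dr}{d\theta}=\frac{\operatorname{Re}\bigl(\overline{w}\,G^\pm(w)\bigr)/r}{\operatorname{Im}\bigl(\overline{w}\,G^\pm(w)\bigr)/r^{2}}=\lambda^\pm r+\sum_{k\ge2}R_k^\pm(\theta)\,r^k ,
\]
where the $R_k^\pm$ are trigonometric polynomials determined by the coefficients of $G^\pm$.

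First, for the upper system I write the solution with $r(0)=r_0$ as $r(\theta)=\sum_{k\ge1}\omega_k^+(\theta)r_0^k$. Substituting into the equation gives a triangular linear recursion: $\omega_1^+=e^{\lambda^+\theta}$, and each $\omega_k^+$ satisfies $\omega_k'=\lambda^+\omega_k+P_k(\theta;\omega_1,\dots,\omega_{k-1})$ with $\omega_k(0)=0$. Each $\omega_k^+$ is therefore obtained by variation of constants, which produces the explicit $\omega_k^+(\pi)$. The upper half-return map, from the positive real axis to the negative one, is then $g^+(r_0)=\sum_k\omega_k^+(\pi)r_0^k$.

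Second, for the lower system I rotate by $w\mapsto -w$. This preserves holomorphy, fixes the linear part, and maps $\{\operatorname{Im}w\le0\}$ onto the upper half-plane. The lower half-return map is therefore again a series $g^-(r)=\sum_k\omega_k^-(\pi)r^k$ with $\omega_1^-(\pi)=e^{\lambda^-\pi}$, where the $\omega_k^-$ are computed from the rotated coefficients.

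Third, I expand $\Delta(r)=g^+(g^-(r))-r$ in powers of $r$. The coefficient of $r$ is $e^{(\lambda^++\lambda^-)\pi}-1$, which gives (i). Each $V_k$ is only meaningful when $V_1=\dots=V_{k-1}=0$, so I impose these conditions successively. $V_1=0$ forces $\lambda^-=-\lambda^+$, hence $\omega_1^-(\pi)=e^{-\lambda^+\pi}$. I then reduce the coefficient of $r^k$ modulo the ideal generated by the earlier quantities, rescaling by the nonzero factors $e^{m\lambda^+\pi}$, to reach the normalized forms (ii)--(v).

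The hardest step is this reduction for $k=4,5$. The raw composition produces many Bell-polynomial terms in the $\omega_j^-(\pi)$. To remove them I plan to use the relations $V_2=V_3=\dots=0$ to eliminate $\omega_2^-(\pi),\omega_3^-(\pi),\dots$ in favour of the corresponding upper quantities. This is essentially inverting $g^+$, which is why the formulas involve only $\omega_j^-$ at top order, together with the Catalan-type coefficients $2,5,14$ and $21,3,6$ that are characteristic of series reversion. Concretely, I would compute the inverse series $(g^+)^{-1}$ up to order $5$. I would then write $\Delta_1=(g^-)^{-1}-g^+$, or equivalently compare $g^-$ with $(g^+)^{-1}$, and match the two expressions coefficientwise. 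The displayed formulas should then appear as the first nonvanishing coefficient, up to a positive factor, after earlier coefficients are set to zero. I would verify the bookkeeping with a computer algebra check of the reversion identities.
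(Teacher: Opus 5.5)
Your proposal is correct. The paper gives no argument of its own for this statement; it imports the formulas verbatim from \cite[Theorem D]{GASRONSIL1}. Your scheme is the standard derivation behind that cited result. It uses the polar equation $dr/d\theta=\lambda^\pm r+\sum_{k\ge2}R_k^\pm(\theta)r^k$, the recursively defined $\omega_k^\pm$, the rotation $w\mapsto -w$ to turn the lower half-return map into an upper one, and the expansion of $g^+\circ g^- -\mathrm{id}$. What you gain over the paper is a self-contained proof instead of a reference, together with an explanation of why the particular coefficients appear.

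Two points sharpen your plan.

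First, the step you flag as hardest needs no elimination of Bell-polynomial terms and no computer check. If $g^+\circ g^-=\mathrm{id}+O(r^k)$, then $g^-=(g^+)^{-1}+O(r^k)$. Write $(g^+)^{-1}(r)=\sum_{j\ge1}d_jr^j$, $a_j=\omega_j^+(\pi)$ and $b_j=\omega_j^-(\pi)$. Then you get directly
\[
g^+\bigl(g^-(r)\bigr)-r=a_1\,(b_k-d_k)\,r^k+O(r^{k+1}),\qquad a_1=e^{\lambda^+\pi}.
\]
Multiply by $a_1^{2(k-1)}$ and insert the Lagrange inversion formulas
\[
\begin{aligned}
-a_1^{3}d_2&=a_2, & -a_1^{5}d_3&=a_1a_3-2a_2^2,\\
-a_1^{7}d_4&=a_1^2a_4-5a_1a_2a_3+5a_2^3, & -a_1^{9}d_5&=a_1^3a_5-6a_1^2a_2a_4-3a_1^2a_3^2+21a_1a_2^2a_3-14a_2^4 .
\end{aligned}
\]
This reproduces (ii)--(v) exactly, including the weights $e^{(2k-1)\lambda^+\pi}$ on $\omega_k^-(\pi)$.

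Second, with your conventions this identity shows that the displayed $V_k$ ($k\ge2$) are the raw $r^k$-coefficients of $\Delta$ multiplied by $e^{2(k-1)\lambda^+\pi}$. So your caveat ``up to a positive factor'' is necessary: the theorem lists normalized Lyapunov quantities, not literally the coefficient in the paper's Definition. Also, these weights come out only with your convention that $\omega_k^-$ encode the \emph{forward} lower map from the negative to the positive semi-axis. Using the coefficients of the backward map would give different formulas. The statement leaves $\omega_k^-$ undefined, so you should fix that convention explicitly in the write-up.
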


We conclude this section by establishing that if the piecewise holomorphic system \eqref{pwhs_eq_1} has a weak focus of order $k$ at the origin, then any sufficiently small perturbation preserving the origin as an equilibrium point can produce at most $k-1$ limit cycles. Furthermore we provide computable conditions to determine when exactly $k-1$ limit cycles emerge. Consider the perturbed system
\begin{equation}\label{pwhs_eq_per_1}
\begin{cases}
\dot{w} = \widetilde{G}^+(w,s), & \text{for } \operatorname{Im}(w) \geq 0, \\
\dot{w} =  \widetilde{G}^-(w,s), & \text{for } \operatorname{Im}(w) \leq 0,
\end{cases}
\end{equation}
where $ \widetilde{G}^\pm(w,0) = G^\pm(w)$ and $s = (s_1,\ldots,s_{k-1}) \in \mathbb{R}^{k-1}$ are perturbation parameters. Let $W_j(s)$ denote the corresponding Lyapunov quantities, with $W_j(0) = V_j$ from Theorem~\ref{teob}.
 \begin{proposition}\label{aux_lemma} 
 	If piecewise holomorphic system  \eqref{pwhs_eq_per_1}, with $s=0$ has a weak focus order $k$ at $p=(0,0),$ then at most $k-1$ limit cycles (counting their multiplicities) bifurcate from $p.$ 
 	Moreover, if $\det(J_{s}({W_1},\dots,{W_{k-1}}))(0)\neq 0$\footnote{Here $J_s$ denotes the Jacobian matrix of $(W_1,\dots,W_{k-1})$ with respect to the parameters $s=(s_1,\dots,s_{k-1})$.}, then there exist real parameters $s_j,$ $j=1,\dots,k-1$ small enough such it  has $k-1$ hyperbolic limit cycles bifurcating from the origin.   
 \end{proposition}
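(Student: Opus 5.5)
The plan is the standard Lyapunov-type bifurcation argument, based on the implicit function theorem applied to the displacement function. Let $\Delta(r,s)=g^+_s(g^-_s(r))-r$ be the displacement function of the perturbed system \eqref{pwhs_eq_per_1}. Both half-return maps are analytic in $(r,s)$ near $(0,0)$, because the origin stays a monodromic equilibrium on the line and the holomorphic fields depend analytically on $s$. We can therefore expand
\[
\Delta(r,s)=\sum_{j\ge1}W_j(s)r^j,
\]
with $W_j(0)=V_j$. The hypothesis says $W_1(0)=\dots=W_{k-1}(0)=0$ and $W_k(0)=V_k\neq0$.

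For the upper bound we first note that
\[
\partial_r^k\Delta(r,s)=k!\,W_k(s)+\mathcal{O}(r),
\]
and this does not vanish in a small neighbourhood $|r|<r_0$, $|s|<s_0$. By Rolle's theorem, $\Delta(\cdot,s)$ has at most $k$ zeros counted with multiplicity in $(-r_0,r_0)$. One of them is always $r=0$, since the origin is preserved. Positive zeros correspond to limit cycles, so at most $k-1$ limit cycles, counted with multiplicity, bifurcate from $p$. If needed, we can handle the sign symmetry by extending $\Delta$ analytically to negative $r$; the counting of zeros of an analytic function in $r$ works directly.

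For the realization part, the assumption $\det J_s(W_1,\dots,W_{k-1})(0)\neq0$ lets us use $(W_1,\dots,W_{k-1})$ as new local coordinates in parameter space, by the inverse function theorem. So any small vector $(w_1,\dots,w_{k-1})$ is attained as $(W_1(s),\dots,W_{k-1}(s))$ for a unique small $s$. We choose these values inductively with alternating signs and decreasing magnitudes:
\[
0<|w_1|\ll|w_2|\ll\dots\ll|w_{k-1}|\ll|V_k|,
\qquad w_jw_{j+1}<0,\quad w_{k-1}V_k<0 .
\]
The classical argument, as in Andronov--Hopf degenerate bifurcations, then applies. At each step $j$, going from $k-1$ down to $1$, we perturb the lowest surviving coefficient to change sign, which creates one new simple positive zero near $0$ and keeps the previous ones, since they are simple and hence persist under small perturbations. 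After $k-1$ steps, $\Delta(r,s)/r$ has $k-1$ simple positive zeros, and these are hyperbolic limit cycles.

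The main obstacle is making the upper bound uniform: the count of zeros of $\Delta(\cdot,s)$ must be controlled in a fixed neighbourhood for all small $s$, and these zeros must match limit cycles with multiplicities. This rests on the analyticity and uniformity of the half-return maps in $(r,s)$ near a monodromic point on the discontinuity line. I would get it from the fact that holomorphic foci are linearizable, and in particular from the explicit form of the maps in \cite{GASRONSIL1}.
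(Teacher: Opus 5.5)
Your proposal is correct and is the standard argument behind \cite[Proposition 4]{GASRONSIL1}, which the paper simply cites without giving a proof. The bound comes from Rolle's theorem applied to $\Delta(r,s)=\sum_{j\ge1}W_j(s)r^j$, using $\partial_r^k\Delta\neq0$ near $(0,0)$ together with the forced zero at $r=0$, and the realization comes from the inverse function theorem plus successive sign changes of $W_{k-1},\dots,W_1$. The uniformity you flag as the main obstacle is already settled by the joint continuity of $\partial_r^k\Delta$ in $(r,s)$, which follows from analytic dependence on initial data and parameters for $d\rho/d\theta$ in polar coordinates, so you do not need linearizability of holomorphic foci.
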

This result follows from \cite[Proposition 4]{GASRONSIL1}.

\subsection{Local Integrability of Holomorphic Systems}\label{sec7}

This subsection presents preliminary results on normal forms and integrability for holomorphic systems. We begin by introducing \emph{conformal conjugacy}, a key tool for classifying holomorphic functions near singularities, and then derive first integrals for associated differential equations.

\subsubsection{Conformal Conjugacy and Normal Forms}

The following definition establishes equivalence between holomorphic functions via conformal mappings.

\begin{definition}  
Let \( f \) and \( g \) be holomorphic in a punctured neighborhood of \( 0 \in \mathbb{C} \). We say \( f \) and \( g \) are \emph{\(0\)-conformally conjugated} if there exist \( R > 0 \) and a conformal map \( \Phi \colon D(0,R) \to D(0,R) \)\footnote{Here $D(0,R) = \{w \in \mathbb{C} : |w| < R\}$ denotes the open disk of radius $R$ centered at $0$.} such that:  
\begin{itemize}  
    \item[(i)] \( \Phi(0) = 0 \), and  
    \item[(ii)] \( \Phi \circ \varphi_f(t,w) = \varphi_g(t,\Phi(w)) \) for all \( w \in D(0,R) \setminus \{0\} \) and \( t \) where both flows \( \varphi_f, \varphi_g \) are defined in \( D(0,R) \).  
\end{itemize}  
More generally, let \( f \) and \( g \) be holomorphic functions defined in some punctured neighborhoods of \( w_1 \in \mathbb{C} \) and \( w_2 \in \mathbb{C} \), respectively. We say that \( f \) and \( g \) are \emph{\(w_1w_2\)–conformally conjugated} if \( f \circ (w - w_1) \) and \( g \circ (w - w_2) \) are \(0\)-conformally conjugated.
\end{definition}  

The next proposition classifies holomorphic functions near zeros/poles up to conformal conjugacy \cite{BT,GGJ2}.

\begin{proposition}[Normal Forms]\label{GGJ}  
Let \( f \) be holomorphic near \( w_0 \in \mathbb{C} \). Then \( f \) is conformally conjugate at \( w_0 \) to:  
\begin{itemize}  
    \item[(a)] \( g(w) \equiv 1 \) if \( f(w_0) \neq 0 \),  
    \item[(b)] \( g(w) \equiv f'(w_0)w \) if \( f \) has a simple zero at \( w_0 \),  
    \item[(c)] \( g(w) \equiv \frac{ w^n}{1 + \gamma w^{n-1}} \) if \( w_0 \) is a zero of order \( n > 1 \) and \( \operatorname{Res}(1/f, w_0) = \gamma\in\mathbb{C} \),  
    \item[(d)] \( g(w) \equiv w^{-n} \) if \( w_0 \) is a pole of order \( n \).  
\end{itemize}  
\end{proposition}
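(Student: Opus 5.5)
The plan is to reduce flow conjugacy to a functional equation and then solve that equation case by case using the \emph{time function} $T_f(w)=\int dw/f(w)$.

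\textbf{Reduction.} For a conformal map $\Phi$ with $\Phi(w_0)=0$, the chain rule shows that $t\mapsto \Phi(\varphi_f(t,w))$ solves $\dot u=g(u)$ exactly when
\[
\Phi'(w)\,f(w)=g(\Phi(w))
\]
holds on a punctured neighbourhood of $w_0$. By uniqueness of solutions, this identity gives condition (ii) of the definition. Equivalently, away from zeros and poles, $\Phi'/g(\Phi)=1/f$. Hence it suffices to find a local biholomorphism $\Phi$ satisfying
\[
T_g\circ\Phi=T_f+\text{const}
\]
with branches chosen consistently. After translating we take $w_0=0$. The requirement that $\Phi$ map a disc onto a disc is met by shrinking $R$: one restricts to a small disc and, if necessary, composes with a dilation absorbed into the normal form. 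I treat this as a technicality.

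\textbf{Cases (a), (b), (d).}
\begin{itemize}
\item[(a)] Since $f(0)\neq0$, put $\Phi(w)=\int_0^w ds/f(s)$. Then $\Phi'(0)=1/f(0)\neq0$ and $\Phi' f\equiv1$.
\item[(b)] Let $\lambda=f'(0)$. The function $\lambda/f$ has a simple pole at $0$ with residue $\lambda/f'(0)=1$, so $\lambda/f=1/w+k(w)$ with $k$ holomorphic. Put $\Phi(w)=w\exp\!\bigl(\int_0^w k\bigr)$. Then $\Phi'/\Phi=\lambda/f$, that is, $\Phi' f=\lambda\Phi$, and $\Phi'(0)=1$.
\item[(d)] Write $1/f=w^n h(w)$ with $h(0)\neq0$. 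Then $T_f(w)=\int_0^w 1/f=w^{n+1}k(w)$ with $k(0)=h(0)/(n+1)\neq0$. Since $T_g(u)=u^{n+1}/(n+1)$, take $\Phi(w)=w\bigl((n+1)k(w)\bigr)^{1/(n+1)}$ for a fixed holomorphic branch of the root. This $\Phi$ is a local biholomorphism with $T_g\circ\Phi=T_f$. Differentiating gives the required identity on the punctured disc.
\end{itemize}

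\textbf{Case (c), the main obstacle.} Here the logarithmic term must be matched. First I would use the fact that residues of $1/f$ are invariant under conformal change of variables, since $\operatorname{Res}(1/f)=\frac{1}{2\pi i}\oint dw/f$ is the integral of the invariant form $dw/f$. A preliminary linear scaling $w\mapsto cw$ makes the leading coefficient of $f$ equal to $1$, so
\[
\frac1f=\frac1{w^n}+\dots+\frac{\gamma}{w}+\text{holomorphic}.
\]
Consequently $T_f(w)=-\dfrac{A(w)}{(n-1)w^{n-1}}+\gamma\log w$ with $A$ holomorphic and $A(0)=1$, and for the model $T_g(u)=-\dfrac{1}{(n-1)u^{n-1}}+\gamma\log u$.

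Next seek $\Phi=w(1+v(w))$. Multiplying $T_g(\Phi)=T_f$ by $-(n-1)w^{n-1}$ leads to
\[
G(w,v):=(1+v)^{-(n-1)}-(n-1)\gamma\,w^{n-1}\log(1+v)-A(w)=0 .
\]
We have $G(0,0)=0$ and $\partial_vG(0,0)=-(n-1)\neq0$. The holomorphic implicit function theorem therefore yields a holomorphic $v$ with $v(0)=0$, so $\Phi'(0)=1$. Differentiating $T_g\circ\Phi=T_f$ gives $\Phi'/g(\Phi)=1/f$, and this identity is single-valued even though the logarithms individually are not.

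The delicate points are the consistency of the logarithm branches and the verification that the constant terms match. The first is handled because $\log(1+v)$ is single-valued near $v=0$. The second is exactly where the equality of residues, $\gamma$ on both sides, is used.
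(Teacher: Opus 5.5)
The paper gives no proof of this proposition; it quotes it from \cite{BT,GGJ2}. Your proposal is therefore a genuinely self-contained alternative, and it is correct.

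The reduction to $\Phi' f=g\circ\Phi$, solved through the time function $T_f=\int dw/f$, works in every case:
\begin{itemize}
\item[(a)] $\Phi'=1/f$.
\item[(b)] $\Phi'/\Phi=\lambda/f$.
\item[(d)] $\Phi^{n+1}/(n+1)=T_f$.
\item[(c)] The implicit function theorem applies, since $G(0,0)=0$ and $\partial_vG(0,0)=-(n-1)$.
\end{itemize}

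For the last step of (c) you can drop all talk of branches. Divide $G(w,v(w))=0$ by $-(n-1)w^{n-1}$ and differentiate, using $\frac{d}{dw}\log(1+v)=\Phi'/\Phi-1/w$ and $\frac{d}{dw}\left(-\frac{A}{(n-1)w^{n-1}}\right)=\frac1f-\frac{\gamma}{w}$. The two $\gamma/w$ terms cancel, leaving $\Phi'(\Phi^{-n}+\gamma\Phi^{-1})=1/f$, that is, $\Phi'f=g\circ\Phi$.

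One correction of wording: the equality of residues matches the $\log w$ terms, not ``constant terms''. An integration constant in $T_f$ only changes $A$ by a multiple of $w^{n-1}$, which leaves $A(0)=1$ untouched.

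Citing the classification buys brevity. Your route buys transparency: it shows the residue is exactly the obstruction, since a holomorphic $v$ cannot produce $\log w$. Combined with your observation that $\oint dw/f$ is invariant under conjugacy, it also shows (c) is sharp: different values of $\gamma$ give non-conjugate germs.

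On the technicality you set aside: the paper's requirement that $\Phi$ map $D(0,R)$ to $D(0,R)$ cannot be meant literally. In (a), any conjugacy with $\Phi(0)=0$ must satisfy $\Phi'=1/f$, so $\Phi'(0)=1/f(0)$. By Schwarz's lemma this is incompatible with $\Phi(D(0,R))\subset D(0,R)$ when $|f(0)|<1$. The definition should be read as ``$\Phi$ conformal from $D(0,R)$ onto a neighbourhood of $0$'', which is what you construct.

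Your proposed repair, a dilation ``absorbed into the normal form'', does not work. Conjugating by $w\mapsto cw$ replaces $g$ by $c\,g(w/c)$, which sends:
\begin{itemize}
\item[] $1$ to $c$;
\item[] $w^{-n}$ to $c^{n+1}w^{-n}$;
\item[] $w^n/(1+\gamma w^{n-1})$ to $c^{1-n}w^n/(1+\gamma c^{1-n}w^{n-1})$.
\end{itemize}
Only the linear form in (b) is dilation-invariant. This affects only how the definition is read, not the validity of your argument.
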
  

\subsubsection{Integrability and First Integrals}

For integrability we restrict to star-shaped domains.

\begin{definition}  
A domain \( \Omega \subseteq \mathbb{C} \) is \emph{star-shaped} with respect to \( w_0 \in \Omega \) if the segment with endpoints $w_0$ and $w$ is contained in \( \Omega \) for all \( w \in \Omega \).  
\end{definition}  

The next standard result appears in most complex analysis textbooks, including \cite{Conway}.

\begin{lemma} 
If \( \Omega \) is star-shaped and \( f \colon \Omega \to \mathbb{C} \) is holomorphic, then \( f \) admits a primitive in \( \Omega \).  
\end{lemma}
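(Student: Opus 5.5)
The statement is the standard fact that a holomorphic function on a star-shaped domain has a primitive. I will build the primitive by integrating along radial segments from the center of the star and then check that its complex derivative is $f$.

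Let $\Omega$ be star-shaped with respect to $w_0$. Define
\[
F(w)=\int_{[w_0,w]} f(\zeta)\,d\zeta=(w-w_0)\int_0^1 f\bigl(w_0+t(w-w_0)\bigr)\,dt ,
\]
where $[w_0,w]$ is the oriented segment from $w_0$ to $w$. This is well defined because star-shapedness puts every such segment inside $\Omega$.

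\textbf{Step 1 (triangles).} Fix $w\in\Omega$. Since $\Omega$ is open, there is a disk $D(w,\delta)\subset\Omega$. For $h$ with $|h|<\delta$, the segment $[w,w+h]$ lies in $\Omega$, because the disk is convex. It remains to know that the closed triangle $T$ with vertices $w_0$, $w$, $w+h$ lies in $\Omega$. Every point of $T$ is of the form $w_0+s(u-w_0)$ with $u\in[w,w+h]\subset\Omega$ and $s\in[0,1]$, so $T\subset\Omega$ by star-shapedness with respect to $w_0$.

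\textbf{Step 2 (Goursat).} Apply the Cauchy--Goursat theorem for triangles to $\partial T$. This theorem needs only complex differentiability of $f$ on a neighborhood of $T$, which holds here. It gives
\[
F(w+h)-F(w)=\int_{[w,w+h]} f(\zeta)\,d\zeta .
\]

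\textbf{Step 3 (derivative).} Since $\int_{[w,w+h]}d\zeta = h$, we can write
\[
\frac{F(w+h)-F(w)}{h}-f(w)=\frac1h\int_{[w,w+h]}\bigl(f(\zeta)-f(w)\bigr)\,d\zeta .
\]
The right-hand side is bounded in modulus by $\sup_{\zeta\in[w,w+h]}|f(\zeta)-f(w)|$. This tends to $0$ as $h\to0$ by continuity of $f$, so $F'(w)=f(w)$.

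The only substantive input is the Goursat lemma in Step 2. Everything else is the geometric check in Step 1 and an elementary estimate. This is classical, as in \cite{Conway}.
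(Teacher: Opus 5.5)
Your proof is correct and complete. Defining $F$ by integrating along the segment from the star centre $w_0$, checking that the closed triangle with vertices $w_0$, $w$, $w+h$ lies in $\Omega$, and applying Goursat's lemma is the classical argument; the paper itself gives no proof and only cites the lemma as a standard textbook fact, so you have supplied the usual justification behind that citation.
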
  

Now consider \( f \) holomorphic in \( D = D(0,R) \setminus \{0\} \), with \( 0 \) as its only zero or isolated singularity. Removing a ray \( L \) yields a star-shaped domain \( \Omega = D \setminus L \), where \( 1/f \) admits a primitive \( G \). For holomorphic vector fields we have the following characterization of trajectories, whose proof appears in \cite{MR4537508}.

\begin{theorem}\label{teoint}  
Let \( G \) be a primitive of \( 1/f \) in \( \Omega \). Then the trajectories of \( \dot{w} = f(w) \) in \( \Omega \) coincide with the level curves of \( \operatorname{Im}( G(w) )\).  
\end{theorem}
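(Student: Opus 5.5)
We need to prove Theorem \ref{teoint}: in $\Omega$, the trajectories of $\dot w=f(w)$ coincide with the level curves of $\operatorname{Im}(G(w))$, where $G$ is a primitive of $1/f$. The plan is to show that $\operatorname{Im} G$ is a first integral, and then that its level sets are regular curves which are exactly orbits.

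\emph{Step 1: $\operatorname{Im} G$ is a first integral.} Since $G' = 1/f$ on $\Omega$, along any solution $w(t)$ of $\dot w = f(w)$ staying in $\Omega$ the chain rule for holomorphic functions gives
\[
\frac{d}{dt}G(w(t)) = G'(w(t))\,\dot w(t) = \frac{f(w(t))}{f(w(t))} = 1 .
\]
Hence $G(w(t)) = G(w(0)) + t$. Thus $\operatorname{Re} G$ increases with unit speed, and $\operatorname{Im} G$ is constant. Every trajectory arc in $\Omega$ is therefore contained in a level curve of $\operatorname{Im} G$. (The hypothesis that $0$ is the only zero or singularity ensures $f\neq 0$ on $\Omega$, so $1/f$ is holomorphic there and the primitive exists by the star-shaped lemma.)

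\emph{Step 2: the converse inclusion.} $G$ is locally biholomorphic because $G' = 1/f \neq 0$. Write $G = u + iv$. By the Cauchy--Riemann equations, $\nabla v$ is nonzero wherever $G'\neq 0$, so every level set $\{v = c\}\cap\Omega$ is a regular one-dimensional curve. In the local coordinate $\zeta = G(w)$ the flow becomes $\dot\zeta = 1$, i.e. horizontal translation. The level set $\{\operatorname{Im}\zeta = c\}$ is precisely a horizontal line, so locally each connected component of a level curve is a single orbit segment. A connectedness argument then globalizes this. Along a connected component $\gamma$ of $\{v=c\}$, the set of points lying on the orbit through a fixed $w_0\in\gamma$ is open in $\gamma$ by the local flow-box picture, and it is closed by the same picture applied at limit points. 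Hence $\gamma$ is one orbit (or one maximal orbit arc within $\Omega$).

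\emph{Main obstacle.} The subtle point is Step 2's globalization and the meaning of ``coincide'': the ray $L$ cuts orbits, so the statement must be read as orbit arcs in $\Omega$ versus connected components of level sets in $\Omega$. I would handle this by noting that $G$ is injective on small disks and by using the parametrization of $\gamma$ by $\operatorname{Re} G$, which is strictly monotone along $\gamma$ because $\partial_t \operatorname{Re} G = 1$. Since $G$ may fail to be globally injective, I would state the result componentwise rather than claim a global identification.
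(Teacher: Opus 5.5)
Your proof is correct. The paper gives no proof of its own here; it simply cites an external reference. Your Step 1, the identity $\frac{d}{dt}G(w(t))=G'(w(t))f(w(t))=1$ (so $G$ locally conjugates the flow to $\dot\zeta=1$), is the standard route to this statement.

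Your Step 2 correctly supplies the reverse inclusion, which is usually left implicit:
\begin{itemize}
\item the level sets are regular because $|\nabla\operatorname{Im}G|=|G'|\neq0$;
\item each flow box is taken as a local $G$-preimage of a small disk, so it meets $\{\operatorname{Im}G=c\}$ in a single orbit arc;
\item an open--closed argument on each connected component then shows the component is one orbit.
\end{itemize}
Reading ``coincide'' componentwise is the right interpretation, since $G$ need not be injective on $\Omega$ (e.g.\ $f(w)=w^{-n}$).
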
  

\subsection{A miscellany of results}

The following lemma will be useful to establish the existence of simple zeros of the averaged functions. Its proof can be found in \cite[Lemma 4.5]{MR2170413}.

\begin{lemma}\label{lemma1}
	Consider $r+1$ linearly independent functions $f_i:U\subset\mathbb{R}\rightarrow\mathbb{R},$ $i=0,\dots,r$
	\begin{itemize}
		\item[(a)] Given $r+1$ arbitrary values $x_i\in U,$ $i=0,\dots,r$ there exist $r+1$ constants $C_i,$ $i=0,\dots,r$ such that
		\begin{equation}\label{eq:lemma}
			f(x):=\sum_{i=0}^r C_if_i(x)
		\end{equation}
		is not the zero function and $f(x_i)=0,$ $i=0,\dots,r.$
		\item[(b)] Furthermore, if all $f_i$ are analytic functions on $U$ and there exists $ 0\le j \le r$ such that $f_j|_U$ has a constant sign, it is possible to get an $f$ given by \eqref{eq:lemma}, such that it has at least $r$ simple zeros in $U.$
	\end{itemize}
\end{lemma}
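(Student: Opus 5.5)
The plan is to prove Lemma~\ref{lemma1} by linear algebra for part (a), and by an inductive perturbation argument for part (b).

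For (a), I would consider the linear map $L:\mathbb{R}^{r+1}\to\mathbb{R}^{r+1}$, $C\mapsto (\sum_i C_i f_i(x_0),\dots,\sum_i C_i f_i(x_r))$. The difficulty is that the statement asks for $r+1$ conditions on $r+1$ unknowns, so a nontrivial kernel is not automatic. I would read (a) in its standard form: for $r$ arbitrary points $x_1,\dots,x_r$ one imposes $f(x_j)=0$, giving $r$ homogeneous linear equations in $r+1$ unknowns. This system has a nonzero solution $C$. By linear independence of the $f_i$, the function $f=\sum C_i f_i$ is then not identically zero. If all $r+1$ prescribed points are genuinely required, I would instead pick the points so that the $(r+1)\times(r+1)$ matrix $(f_i(x_j))$ is singular, or note that this is the version used in \cite{MR2170413}. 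I expect this indexing subtlety to be the only delicate point in (a).

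For (b), I would build the zeros inductively. Choose points $x_1<\dots<x_r$ in $U$ and take, by (a), a nonzero $f$ vanishing at them. Analyticity ensures $f$ has isolated zeros of finite multiplicity, but these zeros need not be simple. To force simplicity I would use the freedom in choosing the points. First select $r$ points such that the $r\times(r+1)$ matrix $(f_i(x_j))$ has rank $r$; this is possible because linear independence of analytic functions gives a nonvanishing generalized Wronskian/Gram determinant at generic points. Then the solution $C$ is unique up to scale, given by the cofactor vector, so $f(x)=\det\big(f_i(x),f_i(x_1),\dots,f_i(x_r)\big)$. This $f$ vanishes at each $x_j$ with derivative $\partial_x\det$. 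Generically in the choice of the $x_j$ this derivative is nonzero, since it is an analytic function of $(x_1,\dots,x_r)$ that is not identically zero by linear independence.

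The constant-sign hypothesis on $f_j$ is what guarantees the sign changes survive. Concretely, I would first arrange that $f$ changes sign at each $x_j$. Then I would replace $f$ by $f+\delta f_j$ with small $\delta$ if needed, to split any multiple zero into simple ones, while not creating a zero set that collapses. Because $f_j$ has constant sign, adding $\delta f_j$ moves the graph in one direction, so each odd-multiplicity zero persists and even ones either disappear or split into two simple zeros. A careful count then yields at least $r$ simple zeros.

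The main obstacle is the genericity claim that the determinant's derivative at the $x_j$ does not vanish. I would handle it via analyticity, arguing that vanishing on an open set of point configurations would yield a linear dependence.
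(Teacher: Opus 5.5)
The paper gives no proof of this lemma; it only cites \cite[Lemma 4.5]{MR2170413}. So your plan has to stand on its own.

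\textbf{Part (a).} Your reading is the right one, and you can be firmer: as printed, (a) is false. For $r=0$ and $f_0\equiv 1$ only $C_0=0$ works. In general, linear independence produces points $x_0,\dots,x_r$ with $\det\bigl(f_i(x_k)\bigr)\neq 0$, and at those points only $C=0$ vanishes everywhere. The intended statement uses $r$ points $x_1,\dots,x_r$, and your rank count proves it. Your fallback of choosing points that make the matrix singular proves nothing about arbitrary points, so drop it.

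\textbf{Part (b), the determinant route.} This is a different route from the perturbation argument the constant-sign hypothesis is tailored for, and it can be completed. But its decisive step is only asserted. Write $\mathbf f=(f_0,\dots,f_r)^T$ and $E_j(x_1,\dots,x_r)=\det\bigl(\mathbf f'(x_j),\mathbf f(x_1),\dots,\mathbf f(x_r)\bigr)$. Everything rests on $E_j\not\equiv 0$. Saying ``by linear independence'' is not yet an argument, because $E_j\equiv 0$ is not visibly a linear relation among the $f_i$. One way to close it:
\begin{itemize}
\item The vectors $\mathbf f(x)$, $x\in U$, span $\mathbb{R}^{r+1}$; otherwise some nonzero $c$ gives $\sum c_if_i\equiv 0$.
\item There is $a\in U$ with $\mathbf f(a),\mathbf f'(a)$ independent. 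If all minors $f_if_k'-f_kf_i'$ vanished identically, analyticity would make all $f_i$ proportional.
\item By the exchange lemma, complete $\{\mathbf f'(a),\mathbf f(a)\}$ to a basis with vectors $\mathbf f(x_2),\dots,\mathbf f(x_r)$. Then $E_1(a,x_2,\dots,x_r)\neq 0$.
\item Each $E_j$ is $\pm E_1$ with permuted variables. Since $U$ is an interval, $U^r$ is connected, so their zero sets are nowhere dense and some configuration has all $E_j\neq 0$.
\end{itemize}
That configuration forces the $x_j$ to be distinct, the $r\times(r+1)$ matrix to have rank $r$, and each $x_j$ to be a simple zero of $\det\bigl(\mathbf f(x),\mathbf f(x_1),\dots,\mathbf f(x_r)\bigr)$. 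Connectedness of $U$ is genuinely needed, not only for your genericity step. On $U=(0,1)\cup(2,3)$, with $f_0\equiv 1$ and $f_1$ the indicator of $(0,1)$, every combination is locally constant and the lemma fails.

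\textbf{Your third paragraph.} The argument above finishes (b) without using the sign hypothesis at all. So your claim that the constant sign of $f_j$ ``is what guarantees the sign changes survive'' is misplaced, and the third paragraph is superfluous. As a standalone proof it is also incomplete. ``First arrange that $f$ changes sign at each $x_j$'' is unexplained, and two ingredients are missing:
\begin{itemize}
\item \emph{Simplicity.} With $f_j$ nonvanishing, the zeros of $f+\delta f_j$ are the solutions of $h=-\delta$, where $h=f/f_j$ is nonconstant and analytic. They are all simple whenever $-\delta$ avoids the countably many critical values of $h$.
\item \emph{Counting.} The sign of $\delta$ must follow the majority type among even-multiplicity zeros. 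With $o$ odd zeros and $k_\pm$ even zeros where locally $\pm h\ge 0$, the good sign gives at least $o+2\max(k_+,k_-)\ge o+k_++k_-=r$ zeros.
\end{itemize}

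\textbf{Comparison.} That perturbation proof is shorter and purely one-dimensional, but it needs a nonvanishing $f_j$ and only places the zeros near the chosen points. Your route, once completed, drops the sign hypothesis and puts simple zeros exactly at generic prescribed points.
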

 
Another classical tool that we shall employ is \textit{Descartes Theorem}, which provides information about the number of positive real zeros of a polynomial based on the sign pattern of its coefficients. For a detailed exposition we refer the reader to \cite{MR0174165}. Recall that for an ordered list of $r+1$ non-zero real numbers $[a_0,a_1,\ldots,a_r]$, the number of sign variations $m$ ($0\le m\le r$) is defined as the number of indices $j\le r-1$ such that $a_j a_{j+1}<0$.

\begin{theorem}[Descartes Theorem]\label{descartes}
	Consider the real polynomial $P(x)=a_{0}x^{i_0}+\dots+a_{p}x^{i_r}$ with $0\leq i_0<\dots<i_r$ and  $a_{j}$ non-zero real constants for $j\in\{0,\ldots,r\}.$ If the number of sign variations  of $[a_0,a_1,\ldots,a_r]$  is $m$, then $P(x)$ has exactly   $m-2n$ positive real zeros counting their multiplicities, where $n$ is a non negative integer number.

\end{theorem}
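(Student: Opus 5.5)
We will prove the two facts that together make up the statement: $Z(P)\le m$ and $Z(P)\equiv m \pmod 2$. Here $Z(P)$ is the number of positive real zeros of $P$ counted with multiplicity, and $m$ is the number of sign variations of $[a_0,\dots,a_r]$. Setting $n=(m-Z(P))/2$ then gives a nonnegative integer. First we normalize. Dividing by $x^{i_0}$ changes neither the positive zeros nor the coefficient list, so we may assume $i_0=0$, i.e. $P(0)=a_0\neq 0$.

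\emph{Parity.} Near $0^+$ the sign of $P$ is that of $a_0$, and for $x\to+\infty$ it is that of $a_r$. Factor $P(x)=\prod_k (x-x_k)^{\mu_k}\,Q(x)$, where the $x_k>0$ are the positive roots and $Q$ has no positive zero. Each root of odd multiplicity flips the sign of $P$ and each root of even multiplicity does not. Hence $(-1)^{Z(P)}=\operatorname{sign}(a_0a_r)$. On the other hand, $\operatorname{sign}(a_0a_r)=(-1)^m$, since each variation flips the sign along the list. Therefore $Z(P)\equiv m \pmod 2$.

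\emph{Bound, by induction on $r$.} For $r=0$, $P=a_0$ has no zeros and $m=0$. For the inductive step, consider $P'(x)=\sum_{j\ge1} i_j a_j x^{i_j-1}$. It has $r$ nonzero terms, with coefficients $i_ja_j$ having the same signs as $a_1,\dots,a_r$. So its number of variations $m'$ is either $m$ or $m-1$, according to whether $a_0a_1>0$ or $a_0a_1<0$. By the induction hypothesis, $Z(P')\le m'$. By Rolle's theorem, counting multiplicities, $Z(P)\le Z(P')+1$. Indeed, a root of multiplicity $\mu$ of $P$ is a root of multiplicity $\mu-1$ of $P'$, and between consecutive distinct positive roots of $P$ there lies a further root of $P'$.

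Now split into the two cases. If $m'=m-1$, then $Z(P)\le m$ directly. If $m'=m$, we only get $Z(P)\le m+1$. But the parity step forces $Z(P)\equiv m \pmod 2$, which rules out $m+1$, so $Z(P)\le m$.

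The only delicate point is the case $m'=m$. There the bound from Rolle's theorem overshoots by one, and we must combine it with the parity argument established first. The multiplicity bookkeeping in Rolle's step is the other place to be careful, but it is routine.
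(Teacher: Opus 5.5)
Your proof is correct and complete. The parity identity $(-1)^{Z(P)}=\operatorname{sign}(a_0a_r)=(-1)^m$, the Rolle estimate $Z(P)\le Z(P')+1$ counted with multiplicities, and induction on the number of terms together form the standard proof of Descartes' rule. Using parity to exclude $Z(P)=m+1$ when $a_0a_1>0$ is the right way to close the one case where Rolle alone overshoots.

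The paper does not prove this statement; it quotes it as a classical tool from the literature, so there is no in-paper argument to compare with. One small point: the induction hypothesis must be applied to $P'$, whose lowest exponent $i_1-1$ need not be zero. Your normalization step already covers this, provided the induction is run over all polynomials with a given number of nonzero terms.
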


\section{M\"obius Transformations and the Equivalence Between Circular and Linear Switching Manifolds}\label{sec:Mobius}

To explore the behavior near the switching manifold and to exploit the symmetries of the unit circle, it is convenient to consider Möbius transformations that preserve $\mathbb{S}^1$ (or map it to a line). These transformations allow to translate the problem to a more convenient coordinate system while preserving the piecewise holomorphic structure. In particular, we need to understand how the vector field transforms under such a change of coordinates. The following fundamental lemma provides the explicit expression for the pushforward of a PWCS under a general Möbius transformation.

\begin{lemma}[Fundamental Lemma]\label{pushforward}
   Given the PWCS \eqref{ch4:eq111} and the Möbius transformation \eqref{mobius_map}, the pushforward of the differential system \(\dot{z} = F^\pm(z)\) under the Möbius transformation is given by
\begin{equation}\label{VFPF}
\dot{w} = (\phi_*F^\pm)(\phi^{-1}(w))= \frac{(cw-a)^2}{ad-bc}F^\pm\left(\frac{-dw + b}{cw - a}\right),
\end{equation}
where $\phi_*$ denotes the usual pushforward, \(w = \phi(z)\) and \(\phi^{-1}(w)\) is the inverse Möbius transformation.
\end{lemma}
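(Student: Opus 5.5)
The plan is a direct chain-rule computation, carried out separately on each side of the switching manifold. Since $\phi$ in \eqref{mobius_map} is a biholomorphism of $\widehat{\mathbb{C}}$, the pushforward of $\dot z=F^\pm(z)$ is the vector field $\dot w$ obtained by differentiating $w=\phi(z(t))$ along solutions. Because $\phi$ is holomorphic, its real derivative acts on tangent vectors as multiplication by the complex number $\phi'(z)$. Hence
\[
\dot w=\phi'(z)\,\dot z=\phi'(z)\,F^\pm(z),\qquad z=\phi^{-1}(w),
\]
which is the pushforward $(\phi_*F^\pm)$ expressed in the $w$ coordinate. This remark is what justifies treating $\phi$ as a complex scalar multiplier rather than a $2\times 2$ Jacobian.

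The remaining work is to express everything in terms of $w$.

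First, I will compute $\phi'(z)=\dfrac{ad-bc}{(cz+d)^2}$ by the quotient rule. We need $ad-bc\neq 0$ for $\phi$ to be a genuine Möbius transformation.

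Second, I will invert $w=\dfrac{az+b}{cz+d}$. Solving $w(cz+d)=az+b$ gives
\[
z=\frac{-dw+b}{cw-a}=\phi^{-1}(w).
\]

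Third, I will rewrite $cz+d$ in terms of $w$:
\[
cz+d=\frac{c(-dw+b)+d(cw-a)}{cw-a}=\frac{bc-ad}{cw-a}.
\]
It follows that
\[
\phi'(\phi^{-1}(w))=\frac{(ad-bc)(cw-a)^2}{(ad-bc)^2}=\frac{(cw-a)^2}{ad-bc}.
\]
Substituting this into $\dot w=\phi'(z)F^\pm(z)$ yields \eqref{VFPF}.

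Finally, I will check that the piecewise structure is transported correctly. The regions $\{|z|\lessgtr 1\}$ are carried by $\phi$ onto the two components of $\widehat{\mathbb{C}}\setminus\phi(\mathbb{S}^1)$, since $\phi(\mathbb{S}^1)$ is a circle or a line. Because the formula is applied side by side, the Filippov structure (the crossing and sliding regions) is transported as well, as $\phi$ is a diffeomorphism that maps $\Sigma$ onto $\phi(\Sigma)$.

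I expect no real obstacle. The only point needing care is the justification that the real Jacobian of a holomorphic map equals complex multiplication by $\phi'$, which follows from the Cauchy–Riemann equations. One should also note that \eqref{VFPF} holds away from the pole $w=a/c$ of $\phi^{-1}$ (the image of $\infty$). At that point the formula is understood on the Riemann sphere, or it is simply irrelevant when $c=0$.
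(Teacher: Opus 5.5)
Your proposal is correct and follows the paper's proof: the chain rule $\dot w=\phi'(z)F^\pm(z)$, then substituting $z=\phi^{-1}(w)=\frac{-dw+b}{cw-a}$ into $\phi'(z)=\frac{ad-bc}{(cz+d)^2}$. Your explicit step $cz+d=\frac{bc-ad}{cw-a}$ spells out algebra the paper leaves implicit, and your remarks on the Cauchy--Riemann justification, the transport of the Filippov structure, and the pole $w=a/c$ are harmless additions.
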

\begin{proof} Consider the Möbius Transformation
$   w=\phi(z) = (az + b)/(cz + d)$
where \(a, b, c,\) and \(d\) are constants with \(ad - bc \neq 0\). Notice that
   \begin{equation}\label{Mob_transf}
   \phi^{-1}(w) = \frac{-dw + b}{cw - a}\quad\text{and} \quad\phi'(z) = \frac{ad - bc}{(cz + d)^2}.
   \end{equation}
Applying the chain rule to the equation $w=\phi(z)$, we have that
   $\dot{w} = \frac{d}{dt} \phi(z) = \phi'(z) \dot{z}$.
   Since \(\dot{z} = F^\pm(z)\), we obtain
   \begin{equation}\label{eq_Fz}
   \dot{w} = \phi'(z) F^\pm(z)
   \end{equation}
  Substituting \(z = \phi^{-1}(w)\) into \eqref{eq_Fz} gives \(\dot{w} = \phi'(\phi^{-1}(w)) F^\pm(\phi^{-1}(w))\). The result then follows from \eqref{Mob_transf}.
\end{proof} 

The following corollary shows an important consequence: normal forms of holomorphic singularities are preserved under Möbius transformations. This means that if we understand the local behavior of a system near a singularity in the straight-line case, we automatically understand it in the circular case after applying the appropriate transformation.
\begin{corollary}\label{mobius-normal}
    Let \( \phi(z)\) be a Möbius transformation in \eqref{mobius_map} and let $F$ be a holomorphic function. If $F(z)$ is in one of the normal forms listed in Proposition~\ref{GGJ} (centered at $z=0$), 
then the pushforward $G(w)=(\phi_*F)(\phi^{-1}(w))$ is in the same normal form centered at $w=\phi(0)=b/d$.
\end{corollary}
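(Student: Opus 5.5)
The plan is to read the statement as saying that $G$ is \emph{conformally conjugate} at $w_0:=\phi(0)=b/d$ to the same normal form as $F$, with the same invariants. A literal equality of expressions cannot hold: for $F\equiv 1$ the pushforward is $\phi'(\phi^{-1}(w))$, which is not constant. I assume $d\neq 0$, so that $\phi$ is a biholomorphism from a neighbourhood $U$ of $0$ onto a neighbourhood of $w_0$, with $\phi'(0)=(ad-bc)/d^2\neq 0$.

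There are two routes. The quick one: by construction $\phi$ conjugates the flow of $\dot z=F(z)$ on $U$ to that of $\dot w=G(w)$, since this is exactly what the pushforward computed in Lemma~\ref{pushforward} does. Composing with the conformal conjugacy that puts $F$ into its normal form gives a conjugacy between $G$ (centered at $w_0$) and that normal form. The only step needing care is the normalisation $\Phi(D(0,R))=D(0,R)$ required in the definition of $0$-conformal conjugacy. This is achieved by shrinking and translating: first compose with $w\mapsto w-w_0$, then restrict to a small disk.

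For a self-contained argument matching the classification in Proposition~\ref{GGJ}, I would instead check directly that the invariants determining the normal form are preserved. Then Proposition~\ref{GGJ}, applied to $G$ at $w_0$, returns the same case. Write $G(\phi(z))=\phi'(z)F(z)$.
\begin{itemize}
\item[(a)] Since $\phi'(0)\neq 0$, we have $G(w_0)\neq 0$ if and only if $F(0)\neq 0$.
\item[(b)] Because $\phi'$ is nonvanishing near $0$ and $\phi$ is a local biholomorphism, the order of a zero or pole is preserved. This covers the zero of order $n>1$ in case (c) and the pole of order $n$ in case (d).
\item[(b$'$)] For a simple zero, the multiplier must also match. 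Differentiating $G(\phi(z))=\phi'(z)F(z)$ and using $F(0)=0$ gives $G'(w_0)\phi'(0)=\phi'(0)F'(0)$, hence $G'(w_0)=F'(0)$.
\item[(c$'$)] For a higher-order zero, the invariant is the residue. The meromorphic $1$-form $\frac{dw}{G(w)}$ pulls back under $\phi$ to $\frac{\phi'(z)\,dz}{\phi'(z)F(z)}=\frac{dz}{F(z)}$. Residues of $1$-forms are invariant under biholomorphic changes of coordinate, being contour integrals over small loops. Hence $\operatorname{Res}(1/G,w_0)=\operatorname{Res}(1/F,0)=\gamma$.
\end{itemize}

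I expect the main obstacle to be interpretive rather than computational: stating precisely what ``in the same normal form'' means, and handling the degenerate situation $d=0$, where $\phi(0)=\infty$ and the claim must be read on the Riemann sphere or excluded. The residue invariance in (c$'$) is the only genuinely non-trivial invariant, and the $1$-form pullback identity settles it at once.
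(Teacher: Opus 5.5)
Your proof is correct. It follows the same overall plan as the paper (show that the invariants fixing the normal form are preserved, then apply Proposition~\ref{GGJ} to $G$ at $b/d$), but it verifies those invariants by a different mechanism.

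\textbf{How the paper does it.} It works case by case from the explicit formula \eqref{VFPF}. For each normal form ($1$, $\lambda z$, $z^n$ with $n\ge 2$, $z^{-n}$, and the rational form) it writes out $G$. It then evaluates $G(b/d)$, $G'(b/d)$ and $G^{(n)}(b/d)=n!\,d^{2(n-1)}/D^{n-1}$, and expands $1/G$ in a Laurent series at $b/d$ to read off the residue.

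\textbf{How you do it.} You use the single identity $G\circ\phi=\phi'\cdot F$ together with the pullback $\phi^*(dw/G)=dz/F$. This gives nonvanishing, order, multiplier and residue in one stroke.

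\textbf{What each buys.} The paper's route produces explicit M\"obius-specific expressions for $G$ and its Laurent coefficients. The cost is case-by-case algebra, and some slips creep in. In the linear case the formula should be $G(w)=\lambda(cw-a)(-dw+b)/D$. In the last case the rational form is taken as $\gamma z^n/(1+z^{n-1})$ rather than the form of Proposition~\ref{GGJ}, so the residue comes out as $1/\gamma$. Your argument avoids this bookkeeping and proves more: it holds for any local biholomorphism with $\phi(0)$ finite, and for any holomorphic $F$, not only one already in normal form.

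\textbf{A correction to your quick route.} Restricting $\phi(\cdot)-w_0$ to a small disk does not make it map $D(0,R)$ onto $D(0,R)$. By the Schwarz lemma, a conformal bijection of $D(0,R)$ fixing $0$ is a rotation, and a non-affine M\"obius map is not. So the condition in the paper's definition has to be read as a conformal map between neighbourhoods of $0$. Proposition~\ref{GGJ} itself needs this reading to be true. Under it, your quick route is immediate and the shrinking step is unnecessary.
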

\begin{proof}
First, suppose that $F(z)\equiv 1,$ then $G(w)=\frac{(cw-a)^2}{D},$ where $D=ad-bc.$ Thus, $G(b/d)=D/d^2\neq0.$ From Proposition \ref{GGJ} we have that $G$ and $g(w)\equiv 1$ are $\phi(0)0-$conformally conjugated.

  In the case where \( F \) is $(a+ib)z$, then $G(w)=\frac{(cw-a)(-dw+b)}{D^2}.$ Thus, $G(b/d)=0$ and $G'(b/d)=a+ib.$ Proposition \ref{GGJ} implies that $G$ and $g(w)\equiv (a+ib)w$ are $\phi(0)0-$conformally. 

    Now suppose that $F(z)\equiv z^n,$ then $$G(w)=\frac{(cw-a)^{2-n}(-dw+b)^n}{D}.$$  If $n\in\mathbb{Z}_{\geq 2},$ then $G^{(k)}(b/d)=0,$ for $k=0,\dots,n-1$ and $G^{(n)}(b/d)=n!d^{2(n-1)}/D^{n-1}\neq0.$ In addition the expansion in Laurent series of $1/G(w)$ around $b/d$ is 
    $$\frac{1}{G(w)}=\frac{D^{n-1}}{d^{2(n-1)}(w-b/d)^n}-\dots+\frac{(-c)^{n-2}D}{d^{n}(w-b/d)^2}+\mathcal{O}(w-b/d)^2.$$
    This implies that $Res(1/G,b/d)=0.$ By Proposition \ref{GGJ} we can conclude that $G$ and $g(w)\equiv w^n$ are $\phi(0)0-$conformally conjugated.

    Otherwise, if $n\in\mathbb{Z}_{\leq -1},$ then the expansion in Laurent series of $G(w)$ around $b/d$ is 
    $$G(w)=\frac{D^{n+1}}{d^{2(n+1)}(w-b/d)^n}+\mathcal{O}(w-b/d)^{1-n}.$$ Thus \( G \) has a pole of order \( n \) at $b/d$. By Proposition \ref{GGJ} we have that \( G \) and \( 1/w^n \) are $\phi(0)0-$conformally conjugated.

    Finally assume that $F(z)\equiv \frac{\gamma z^n}{1+z^{n-1}},$ then $$G(w)=\frac{\gamma(cw-a)(-dw+b)^n}{D((-aw+c)^{n-1}+(-dw+b)^{n-1})}.$$  Since $n\in\mathbb{Z}_{\geq 2},$ then $G^{(k)}(b/d)=0$ for $k=0,\dots,n-1$ and $G^{(n)}(b/d)=n!d^{2(n-1)}/D^{n-1}\neq0.$ In addition, the expansion in Laurent series of $1/G(w)$ around $b/d$ is 
    $$\frac{1}{G(w)}=\frac{D^{n-1}}{d^{2(n-1)}(w-b/d)^n}-\dots+\frac{1}{\gamma(w-b/d)}+\frac{cd}{D\gamma}+\mathcal{O}(w-b/d).$$
    This implies that $Res(1/G,b/d)=1/\gamma.$ By Proposition \ref{GGJ} we can conclude that $G$ and $g(w)\equiv \frac{\gamma w^n}{1+w^{n-1}}$ are $\phi(0)0-$conformally conjugated.
\end{proof}

The Fundamental Lemma \ref{pushforward} provides an explicit formula describing how a holomorphic vector field transforms under a Möbius transformation. This result underpins our approach, as it enables to reduce the study of limit cycles in systems with circular discontinuities to the more tractable setting of straight-line discontinuities. 

Applying the transformation \eqref{mobius_map1} which maps $\mathbb{S}^1$ onto the real line (see Figure \ref{fig_conf_map}), the pushforward formula \eqref{VFPF} gives precise control over the degrees and coefficients of the transformed vector fields. Moreover $\phi$ maps the interior (respectively exterior) of the circle onto the lower half-plane 
\[
\phi(\Sigma^-) = \{ w \in \mathbb{C} \mid \operatorname{Im}(w) < 0 \}
\]
(respectively the upper half-plane 
\[
\phi(\Sigma^+) = \{ w \in \mathbb{C} \mid \operatorname{Im}(w) > 0 \}).
\]

\begin{figure}[h]
\begin{overpic}[scale=0.5]{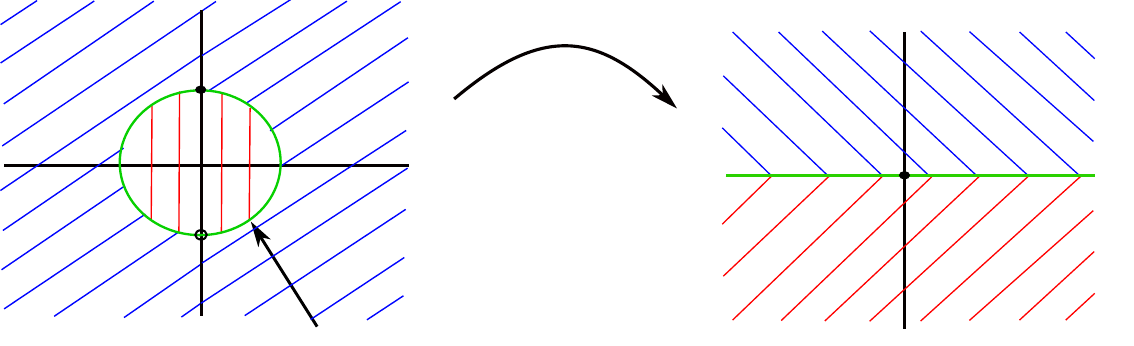}
        \put(98,15){$\phi(\Sigma)$}
        \put(25,-2){$\Sigma=\mathbb{S}^1$}
        \put(49,29){$\phi$}
\end{overpic}
\caption{\footnotesize{The conformal map $\phi(z)=\frac{-z+i}{iz-1}$.}}
\label{fig_conf_map}
\end{figure}

Consequently instead of analyzing the system with the unit circle as the discontinuity, we may equivalently study a system with the real axis as the discontinuity. In particular, the dynamics of PWHS \eqref{ch4:eq111} are equivalent to those of
\begin{equation}\label{ch4:eq222}
\begin{aligned}
\left\{
\begin{array}{l}
\dot{w}=G^{+}(w), \text{ when } \operatorname{Im}(w)\geq 0,\\[5pt]
\dot{w}=G^{-}(w), \text{ when } \operatorname{Im}(w)\leq0,
\end{array}
\right.
\end{aligned}
\end{equation}
where $G^{\pm}(w)$ denotes the pushforward of $F^\pm$ under $w=\phi(z)$, as it is given in Fundamental Lemma \ref{pushforward}. 

\begin{remark}\label{rek}
In the context of a piecewise holomorphic system where the system is defined on different regions separated by the unit circle, if a limit cycle of the system passes through a point where a Möbius transformation \( \phi \) is not defined (e.g., \( -i \) in this case), the image of this limit cycle under the Möbius transformation \( \phi \) may not be a traditional limit cycle (see Figure \ref{fig_conf_map_1}). Instead it could transform into an orbit that asymptotically approaches infinity. 

This occurs because Möbius transformations can map finite points to infinite points and vice versa. As a result the transformed orbit might not be a closed loop in the finite complex plane but could instead becomes a trajectory that closes at infinity. Thus while the original trajectory is a limit cycle, its image under \( \phi \) could be a more complex structure that does not correspond to a limit cycle in the usual sense but rather to a trajectory approaching infinity.

   \begin{figure}[h]
\begin{overpic}[scale=0.5]{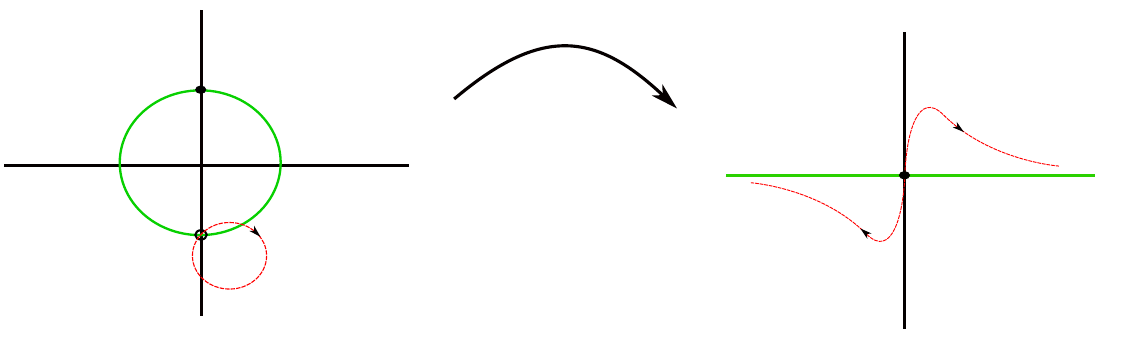}
        \put(90,20){$\phi(\gamma(t))$}
        \put(21,2){$\gamma(t)$}
        \put(49,29){$\phi$}
		\end{overpic}
\caption{\footnotesize{Figure illustrating the effect of a Möbius transformation on a limit cycle of a piecewise holomorphic system. On the left the limit cycle (red) of the original system passes through a point where the Möbius transformation \( \phi \) is not defined (e.g., \( -i \)). On the right the image of this limit cycle (red) under the transformation \( \phi \) is shown. The transformation \( \phi \) maps the original limit cycle to an orbit that does not close in the finite plane but rather approaches infinity. This demonstrates that a limit cycle passing through a singularity of the Möbius transformation may be transformed into an orbit that "closes" at infinity, rather than forming a traditional limit cycle in the image plane.}}
\label{fig_conf_map_1}
\end{figure}
\end{remark}
This remark highlights the potential change in the qualitative nature of the trajectory when subjected to a Möbius transformation, especially when the transformation has poles or is not defined at certain points.

To illustrate the practical utility of the Fundamental Lemma we construct an explicit piecewise holomorphic system with $\mathbb{S}^1$ discontinuity that possesses a limit cycle. The construction proceeds by designing a system in the transformed coordinates (where the discontinuity is a straight line) that has a readily computable Poincaré map, and then pulling it back via $\phi^{-1}$.

\begin{example}
Consider the PWHS \eqref{ch4:eq111} with 
$$F^+(z)=\left(\frac{1+i}{2}\right) (1 + z^2) \quad\text{and}\quad F^-(z)=\frac{i}{4}  \left((-1 + 2 i) + e^{-\pi} + \frac{4}{i + z}\right) (-1 + i z)^2.$$
We claim that this system has a limit cycle. From Fundamental Lemma \ref{pushforward} we get that 
$$G^\pm(w)=\frac{(iw+1)^2}{2}F^\pm\left(\frac{w+i}{iw+1}\right).$$
Thus 
$$G^+(w)=(-1+i)w \quad\text{and}\quad G^-(w)=i \left(w-\frac{1}{2} + \frac{e^{-\pi}}{2}\right).$$
Consider $w_0\in\mathbb{R}^+.$ Recall that \[w^+(t)=\left(w_0-\frac{1}{2}+\frac{e^{-\pi}}{2}\right)e^{it}+\frac{1}{2}-\frac{e^{-\pi}}{2}\]
is a solution of $\dot{w}=i\left(w-\frac{1}{2}+\frac{e^{-\pi}}{2}\right)$ satisfying that $w^+(0)=w_0$ and $w^{+}(\pi)=1-e^{-\pi}-w_0$. In addition, 
$w^-(t)=-e^{(-1+i)t}\left(w_0-1+e^{-\pi}\right)$
is a solution of $\dot{w}=(-1+i)w$ such that $w^{-}(0)=1-e^{-\pi}-w_0$ and $w^{-}(\pi)=e^{-\pi}(w_0-1+e^{-\pi})$.

Therefore the Poincaré map around $w=w_0$ is 
$$\Pi(w)=e^{-\pi}\left(w-1+e^{-\pi}\right)$$ and $\Pi'( w_{0})=e^{-\pi}<1$. Now we must seek solutions for the equation $\Pi(w_{0})=w_{0}$. The number of roots of this equation correspond to the number of limit cycles. We have a unique solution given by $w_0=-e^{-\pi}$, thus we have a unique limit cycle $\Gamma$, see Figure \ref{fig_phase_portrait_1}. Finally using the first derivative of the Poincaré map we can conclude that $\Gamma$ is stable.

      \begin{figure}[h]
\begin{overpic}[scale=0.3]{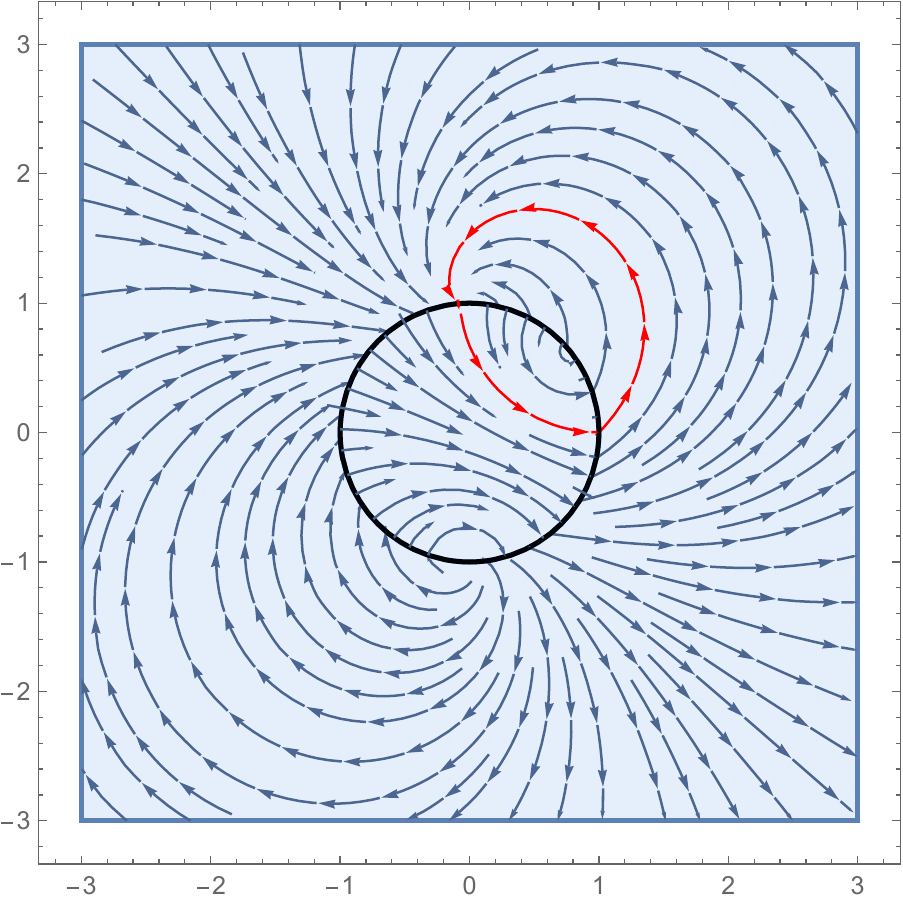}
		\end{overpic}
\caption{\footnotesize{Phase portrait of a piecewise holomorphic system with a discontinuity on the unit circle \( \mathbb{S}^1 \). The unit circle is shown in black. A limit cycle is highlighted in red, illustrating its behavior as it interacts with the discontinuity. The phase portrait demonstrates how the trajectories behave in the regions inside and outside the unit circle, emphasizing the dynamics of the limit cycle and its interaction with the discontinuity.}}
\label{fig_phase_portrait_1}
\end{figure}
\end{example}

\begin{theorem}\label{thm:mobius_limit_cycle}
Let $\phi(z)$ be the Möbius transformation defined in \eqref{mobius_map1}. 
Let $\gamma$ be a closed orbit of system \eqref{ch4:eq111} such that:
\begin{itemize}
\item[(i)] it intersects the switching manifold only at crossing points (in the Filippov sense); and
\item[(ii)] it does not pass through the pole $z=-i$ of $\phi$.
\end{itemize}
Then $\gamma$ is a limit cycle of system \eqref{ch4:eq111} if, and only if, $\phi(\gamma)$ is a limit cycle of system \eqref{ch4:eq222} (see Figure \ref{fig_conf_map_2}). Moreover:
\begin{itemize}
\item[(a)] The period is preserved.
\item[(b)] The stability type (attracting, repelling, hyperbolic) is preserved.
\end{itemize}
\end{theorem}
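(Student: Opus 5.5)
The plan is to regard $\phi$ as a real-analytic diffeomorphism from $\mathbb{C}\setminus\{-i\}$ onto $\mathbb{C}\setminus\{-i\}$ (indeed $\phi(\infty)=i\cdot(-1)/i\ldots$; all we use is that $\phi$ is biholomorphic away from its pole $z=-i$). By the Fundamental Lemma~\ref{pushforward}, $G^\pm=\phi_*F^\pm$. Hence $\phi$ maps solutions of $\dot z=F^\pm(z)$ to solutions of $\dot w=G^\pm(w)$ with the \emph{same time parametrization}: if $z(t)$ solves the first equation, then $w(t)=\phi(z(t))$ satisfies $\dot w=\phi'(z)F^\pm(z)=G^\pm(w)$. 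In addition, $\phi(\Sigma^\pm)$ is the upper or lower half-plane and $\phi(\mathbb{S}^1\setminus\{-i\})=\mathbb{R}$. So $\phi$ is a conjugacy of the two piecewise systems, with regions matched and the switching sets matched.

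The first step is to check that the Filippov classification of points on the switching set is preserved. A point $p\in\mathbb{S}^1$ is a crossing point when $F^+$ and $F^-$ have normal components of the same nonzero sign with respect to $\mathbb{S}^1$. Since $d\phi_p$ is a linear isomorphism (in fact a rotation-dilation) sending $T_p\mathbb{S}^1$ to $T_{\phi(p)}\mathbb{R}$ and the side $\Sigma^\pm$ to the corresponding half-plane, the signs of the normal components of $G^\pm=d\phi\,F^\pm$ at $\phi(p)$ agree with those of $F^\pm$ at $p$. Consequently crossing points correspond to crossing points. By hypothesis~(ii) and compactness, $\gamma$ stays at positive distance from $-i$, so $\phi(\gamma)$ is a compact closed orbit of \eqref{ch4:eq222} in $\mathbb{C}$ that crosses $\mathbb{R}$ only at crossing points. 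Applying the same argument to $\phi^{-1}$ gives the converse correspondence between closed crossing orbits.

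The second step is isolation. Because $\gamma$ is compact and avoids $-i$, there is a tubular neighbourhood $U$ of $\gamma$ with $-i\notin\overline U$. On $U$, $\phi$ restricts to a diffeomorphism onto a neighbourhood of $\phi(\gamma)$, and it sends closed orbits in $U$ bijectively to closed orbits in $\phi(U)$. Therefore $\gamma$ is isolated among closed orbits exactly when $\phi(\gamma)$ is, and this proves the equivalence. Part~(a) follows at once from the time-preserving conjugacy. For part~(b), take a transversal section $\tau$ through a crossing point of $\gamma$, for instance an arc of $\mathbb{S}^1$ itself, and its image $\phi(\tau)\subset\mathbb{R}$. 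The Poincaré maps then satisfy $P_{\phi(\gamma)}=\phi\circ P_\gamma\circ\phi^{-1}$. This relation preserves attraction and repulsion. Writing both maps in one-dimensional coordinates, the chain rule at the fixed point gives $P_{\phi(\gamma)}'=P_\gamma'$, since the factors $\phi'$ and $(\phi^{-1})'$ at the same point cancel. So hyperbolicity and the multiplier are preserved.

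The main obstacle is justifying that the Poincaré map of a Filippov system is well defined and conjugated in this way. One must check that near a crossing orbit the half-return maps are smooth, which follows from transversality and the implicit function theorem, and that $\phi$ commutes with each half-flow separately, including the switching times. Since crossing is an open condition and $\phi$ conjugates the flows together with their hitting times on the switching sets, the composed return maps conjugate. Hypothesis~(ii) is essential here, because without it the image orbit escapes to infinity, as explained in Remark~\ref{rek}.
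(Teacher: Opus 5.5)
Your proposal is correct and follows essentially the same route as the paper: a time-preserving conjugacy from the Fundamental Lemma, isolation by transporting nearby closed orbits through $\phi$ on a neighbourhood of $\gamma$ avoiding $-i$, and conjugation of Poincaré maps with the chain rule at the fixed point. The only differences are minor. You place the section on $\mathbb{S}^1$ (the paper uses one inside a region, which also covers the case where $\gamma$ never meets $\mathbb{S}^1$), and you explicitly check that crossing points correspond, which the paper takes for granted. One harmless slip: since $\phi(\infty)=i$, $\phi$ maps $\mathbb{C}\setminus\{-i\}$ onto $\mathbb{C}\setminus\{i\}$, not onto $\mathbb{C}\setminus\{-i\}$.
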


\begin{proof}
Let $\gamma=\{z(t):t\in[0,T)\}$ be a $T$-periodic orbit of \eqref{ch4:eq111} satisfying the hypotheses. Condition (ii) guarantees that $\gamma$ avoids the pole $z=-i$, so $\phi$ is a diffeomorphism in a neighborhood of $\gamma$. Define $w(t)=\phi(z(t))$.

By the Fundamental Lemma \ref{pushforward} the vector fields $F^\pm$ and $G^\pm$ are conjugated by $\phi$ in each region. Since $\gamma$ intersects $\Sigma$ only at crossing points, the trajectory evolves piecewise smoothly and $w(t)$ satisfies system \eqref{ch4:eq222} for all $t$.

From $z(t+T)=z(t)$ it follows that
$$
w(t+T)=\phi(z(t+T))=\phi(z(t))=w(t),
$$
so $\phi(\gamma)$ is $T$-periodic, proving (a).

Assume that $\gamma$ is a limit cycle. To prove $\phi(\gamma)$ is also a limit cycle we must show that it is an isolated periodic orbit. Suppose, for contradiction, that there exists a sequence $\{\widetilde{\gamma}_n\}$ of distinct periodic orbits of \eqref{ch4:eq222} converging to $\phi(\gamma)$ in the Hausdorff distance. Since $\phi(\gamma)$ is compact and $\phi$ is a diffeomorphism in a neighborhood of $\gamma$, the inverse $\phi^{-1}$ is continuous on $\phi(\gamma)$. For sufficiently large $n$ the orbits $\widetilde{\gamma}_n$ lie within this neighborhood, so applying $\phi^{-1}$ yields a sequence $\{\gamma_n=\phi^{-1}(\widetilde{\gamma}_n)\}$ of distinct periodic orbits of \eqref{ch4:eq111} converging to $\gamma$, contradicting the isolation of $\gamma$. Hence $\phi(\gamma)$ is isolated. The converse follows by applying the same argument with $\phi^{-1}$.

To compare stability, let $z_0\in\gamma$ be a point away from the switching manifold $\Sigma=\mathbb{S}^1$ and from the pole $z=-i$. Since $\gamma$ intersects $\Sigma$ only at crossing points and otherwise lies entirely in either $|z|<1$ or $|z|>1$, we can choose $z_0$ in the interior of one of these regions. Let $\Sigma_0$ be a smooth transversal section through $z_0$, for instance a small line segment orthogonal to $\gamma$ at $z_0$, chosen sufficiently small so that $\Sigma_0$ is contained entirely within the same region and such that the Poincaré map $P:\Sigma_0\to\Sigma_0$ associated with $\gamma$ is well defined and smooth.

Since $\phi$ is a diffeomorphism in a neighborhood of $\gamma$, its derivative $d\phi_{z_0}$ is an invertible linear map. Transversality of $\Sigma_0$ to $\gamma$ at $z_0$ means that $T_{z_0}\Sigma_0$ and $T_{z_0}\gamma$ span $\mathbb{R}^2$. Applying $d\phi_{z_0}$, which is an isomorphism, we obtain that $T_{w_0}\widetilde{\Sigma}_0$ and $T_{w_0}\widetilde{\gamma}$ also span $\mathbb{R}^2$, where $w_0=\phi(z_0)$, $\widetilde{\Sigma}_0=\phi(\Sigma_0)$, and $\widetilde{\gamma}=\phi(\gamma)$. Hence $\widetilde{\Sigma}_0$ is transverse to $\widetilde{\gamma}$ at $w_0$, and we denote by $\widetilde{P}:\widetilde{\Sigma}_0\to\widetilde{\Sigma}_0$ the corresponding Poincaré map of system \eqref{ch4:eq222}.

Since the flow in each region is conjugated by $\phi$, and the sections are chosen so that orbits intersect them only within the same region before returning, we have the commutative relation $\widetilde{P}=\phi\circ P\circ\phi^{-1}$. Differentiating at the fixed points yields
$$
\widetilde{P}'(w_0)=\phi'(z_0)P'(z_0)(\phi^{-1})'(w_0).
$$
Since $(\phi^{-1})'(w_0)=1/\phi'(z_0)\neq0$, we obtain $\widetilde{P}'(w_0)=P'(z_0)$. Hence the stability type (attracting, repelling, or hyperbolic) is preserved. This establishes (b).
\end{proof}

   \begin{figure}[h]
\begin{overpic}[scale=0.5]{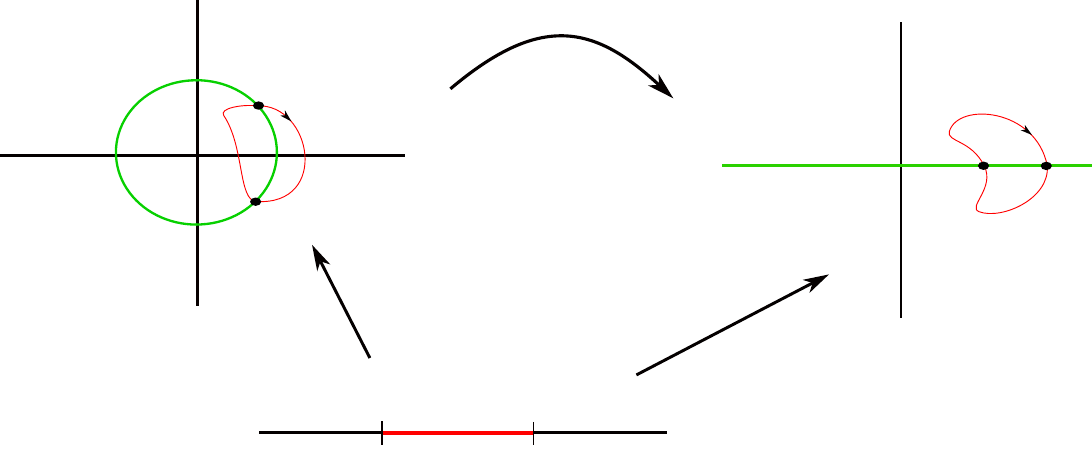}
        \put(66,9){$\phi\circ\gamma$}
        \put(28,13){$\gamma$}
         \put(23,34){{\tiny$A$}}
          \put(23,20){{\tiny$B$}}
        \put(51,41){$\phi$}
        \put(34,-2){$0$}
        \put(47,-2){$T$}
         \put(83,24){{\tiny$\phi(A)$}}
          \put(97,24){{\tiny $\phi(B)$}}
		\end{overpic}
\caption{\footnotesize{This figure illustrates the effect of Möbius transformations on periodic orbits in the complex plane. Both the original periodic orbit and the transformed orbit are shown in red. Möbius transformations not only map periodic orbits to other periodic orbits but also preserve their period.}}
\label{fig_conf_map_2}
\end{figure}

\begin{remark}
Theorem \ref{thm:mobius_limit_cycle} provides a complete correspondence between the dynamics of piecewise holomorphic systems with circular discontinuity and those with linear discontinuity, provided the limit cycles avoid the pole of the Möbius transformation. In particular, any lower bound on the number of limit cycles obtained for systems with a straight-line discontinuity (via averaging theory, Lyapunov quantities, or explicit construction) automatically yields the same lower bound for systems with $\mathbb{S}^1$ discontinuity. This observation underlies all the subsequent results in this paper.
\end{remark}

A natural question that arises from the correspondence established in Theorem \ref{thm:mobius_limit_cycle} is how the algebraic structure of limit cycles behaves under Möbius transformations. Since many of the explicit constructions in the literature (see, for instance, \cite{doi:10.1142/S0218127418500396}) yield limit cycles that are algebraic curves, it is important to understand whether this property is preserved when transferring examples between the straight-line and circular settings. The following results show that the answer is affirmative, provided we take into account the effect of the transformation on the degree of the defining polynomials and the special care required when the limit cycle passes through the pole of the Möbius map.

A fundamental tool in our analysis is the action of Möbius transformations on algebraic curves and on the dynamics. Since Möbius transformations are birational maps of degree $1$, they preserve the algebraic nature of curves, although the degree may change. The following proposition establishes that the property of being an algebraic limit cycle is invariant under such transformations.

\begin{proposition}\label{prop:alg_mobius}
Let $\phi$ be a Möbius transformation of the form \eqref{mobius_map} and consider the PWHS \eqref{ch4:eq111} with switching manifold $\mathbb{S}^1$. Suppose $\gamma$ is a closed orbit that satisfies the hypotheses of Theorem \ref{thm:mobius_limit_cycle}. Then $\gamma$ is an algebraic limit cycle of the original system if, and only if, $\phi(\gamma)$ is an algebraic limit cycle of the transformed system, whose switching manifold is $\phi(\mathbb{S}^1)$ (either a circle or a line).
\end{proposition}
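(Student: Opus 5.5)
The proof splits into two independent parts: a dynamical part and an algebraic part. The dynamical part is already done. By Theorem~\ref{thm:mobius_limit_cycle} (whose argument uses only that $\phi$ is a diffeomorphism near $\gamma$ conjugating $F^\pm$ to $G^\pm$ via Lemma~\ref{pushforward}, so it holds for a general Möbius map away from its pole), $\gamma$ is a limit cycle if and only if $\phi(\gamma)$ is one. Moreover, $\phi$ maps $\Sigma^\pm\setminus\Sigma$ bijectively onto the two open components of the complement of $\phi(\mathbb{S}^1)$. Hence the arcs $\gamma\cap(\Sigma^\pm\setminus\Sigma)$ correspond exactly to the arcs of $\phi(\gamma)$ on the two sides of the new switching manifold. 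What remains is to show that an arc lies in the real zero set of an irreducible real polynomial if and only if its image does.

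For the algebraic part, I write $\phi^{-1}(w)=\frac{-dw+b}{cw-a}$ and separate real and imaginary parts. With $w=u+iv$,
\[
\phi^{-1}(w)=\frac{(-dw+b)\overline{(cw-a)}}{|cw-a|^2},
\]
so $x=A(u,v)/Q(u,v)$ and $y=B(u,v)/Q(u,v)$. Here $A$ and $B$ are real polynomials of degree at most $2$, and $Q=|cw-a|^2$ is a real quadratic (or a constant if $c=0$) that is positive off the pole $w=a/c$. Given $p^\pm(x,y)$ of degree $m$, I define
\[
\widetilde p^{\pm}(u,v)=Q^{m}\,p^\pm\!\left(A/Q,\,B/Q\right),
\]
which is a real polynomial. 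On an arc of $\gamma$ avoiding the pole, $p^\pm=0$ gives $\widetilde p^\pm=0$ on the image arc, since $Q\neq0$ there. The converse direction uses the same construction with $\phi$ in place of $\phi^{-1}$, which is also Möbius.

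The main obstacle is irreducibility. The polynomial $\widetilde p^\pm$ may acquire spurious factors, namely powers of $Q$ or its complex linear factors $(cw-a)$, $(\overline{cw-a})$, which vanish only at the pole. It could also factor further. To handle this, I take an irreducible real factor $q$ of $\widetilde p^\pm$ that vanishes on infinitely many points of the image arc. Such a factor exists because the arc is an infinite set and there are finitely many factors. The image arc is a real-analytic curve, so it is then contained in the zero set of $q$ by analytic continuation along the arc; this also handles a piece of the arc that is cut by the pole in the non-compact case. This $q$ serves as the required irreducible polynomial. The factor cannot be $Q$ or one of its complex linear factors, since those vanish only at the single pole, which the arc avoids by hypothesis~(ii). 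Birationality of $\phi$ guarantees that $q$ pulls back to a curve containing the original arc, so the two notions match. The degree may change, up to at most $2m$, but the definition of algebraic limit cycle does not fix degrees.

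Combining the two parts gives the equivalence. The only subtle point, which I will state explicitly, is the use of hypothesis~(ii) to exclude the pole, both for the dynamics and for the spurious factors of $Q$.
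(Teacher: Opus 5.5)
Your proposal is correct and follows the same route as the paper: Theorem~\ref{thm:mobius_limit_cycle} gives the limit-cycle correspondence, and clearing denominators in $p^\pm\circ\phi^{-1}$ gives polynomials of degree at most $2m$ that vanish on the image arcs, with the converse obtained by using $\phi^{-1}$ in place of $\phi$. Your irreducibility step goes beyond the paper, which only says ``up to possible extraneous factors''; if $\gamma$ meets one region in several arcs, choosing a factor arc by arc could a priori give different factors, but this is resolved by noting that $Q\circ\phi=|ad-bc|^2/|cz+d|^2$, so unique factorization forces $\widetilde p^{\pm}=\kappa\, Q^{j} q$ with $q$ irreducible, and $Q\neq 0$ on the image arcs.
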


\begin{proof}
Möbius transformations are rational maps of degree $1$, hence they send algebraic curves to algebraic curves. More precisely, if $\Gamma = \{P(x,y)=0\}$ is an algebraic curve of degree $d$, then $\phi(\Gamma)$ is an algebraic curve. Indeed,
\[
\phi(\Gamma) = \{ (u,v) \in \mathbb{R}^2 \mid P(\phi^{-1}(u,v)) = 0 \},
\]
and clearing denominators in $P(\phi^{-1}(u,v))$ yields a polynomial $Q(u,v)$ that defines $\phi(\Gamma)$ (up to possible extraneous factors). The degree of $Q$ is at most $2d$, since $\phi^{-1}$ consists of rational functions with numerator and denominator of degree $1$.

Now suppose $\gamma$ is an algebraic limit cycle of the original system. By Definition there exist irreducible polynomials $p^+$ and $p^-$ such that
\[
\gamma \cap (\Sigma^+ \setminus \Sigma) \subset \{p^+=0\}, \qquad
\gamma \cap (\Sigma^- \setminus \Sigma) \subset \{p^-=0\}.
\]
Applying $\phi$ we obtain
\[
\phi(\gamma) \cap \big(\phi(\Sigma^+) \setminus \phi(\Sigma)\big) \subset \phi(\{p^+=0\}), \qquad
\phi(\gamma) \cap \big(\phi(\Sigma^-) \setminus \phi(\Sigma)\big) \subset \phi(\{p^-=0\}).
\]
As argued above $\phi(\{p^\pm=0\})$ are algebraic curves. By Theorem \ref{thm:mobius_limit_cycle} $\phi(\gamma)$ is a limit cycle of the transformed system. Therefore $\phi(\gamma)$ is an algebraic limit cycle.

Conversely, if $\phi(\gamma)$ is an algebraic limit cycle, applying the same argument with $\phi^{-1}$ (which is also a Möbius transformation) and using Theorem \ref{thm:mobius_limit_cycle} in the reverse direction shows that $\gamma$ is algebraic.
\end{proof}

\begin{remark}\label{rem:bidegree}
Proposition \ref{prop:alg_mobius} shows that the property of being an algebraic limit cycle is preserved under Möbius transformations. However, the bidegree $(m,n)$ is not necessarily invariant. For instance, if the original cycle $\gamma = \{p^-, p^+\}$ has $p^\pm$ of degree $m^\pm$, the transformed cycle $\phi(\gamma) = \{q^-, q^+\}$ satisfies
\[
\deg(q^\pm) \leq 2\deg(p^\pm),
\]
with equality generically. This upper bound arises because $\phi^{-1}$ introduces rational functions with denominator of degree $1$; clearing denominators may double the degree. In particular, when $\phi$ maps $\mathbb{S}^1$ to $\mathbb{R}$ (as in \eqref{mobius_map1}), the bidegree of the transformed cycle provides a measure of the complexity of the original cycle in the canonical straight-line setting.
\end{remark}

\begin{remark}\label{rem:mobius_pole}
The hypothesis in Proposition \ref{prop:alg_mobius} that the limit cycle does not pass through the pole of the M\"obius transformation is essential to guarantee that the image remains a bounded algebraic curve. To illustrate this consider the system
\[
\dot{w} =
\begin{cases}
iw, & \operatorname{Im}(w) \geq 0, \\[6pt]
\left(\dfrac{3}{2} + \dfrac{i}{3}\right) + i w - \left(\dfrac{3}{2} - \dfrac{i}{3}\right) w^2, & \operatorname{Im}(w) \leq 0,
\end{cases}
\]
which, following the same ideas used in the proof of Theorem \ref{thm:alg_s1}, can be shown to have the algebraic limit cycle $\Gamma = \{w\in\mathbb{C}: |w| = 1 \}$. This cycle passes through the pole $z = i$ of the M\"obius transformation $\phi^{-1}(w) = (w+i)/(iw+1)$. Applying $\phi^{-1}$ to the system yields a piecewise holomorphic system with switching manifold $\mathbb{S}^1$ whose limit cycle is $\phi^{-1}(\Gamma) = \mathbb{R}$, the real line. Although $\mathbb{R}$ is an algebraic curve (a line) it is unbounded and passes through infinity. Thus the hypothesis that the cycle avoids the pole is necessary to ensure that the transformed cycle is a bounded algebraic curve in the finite complex plane.
\end{remark}

\section{Proof of Main Results}\label{sec:proofs}

\subsection{Proof Theorem \ref{teo1}}
Consider the piecewise holomorphic system \eqref{ch4:eq111} with
\[
F^\pm(z)=\frac{1+z^2}{2}+\varepsilon h^\pm\!\bigl(\phi(z)\bigr),
\]
where \(\phi(z)=(-z+i)/(iz-1)\) and \(h^\pm(w)=\sum_{k=0}^{n^\pm}(a_k^\pm+ib_k^\pm)w^k\) are holomorphic polynomials of degrees \(n^\pm\ge1\).  
Applying the push‑forward formula \eqref{VFPF} (Fundamental Lemma~\ref{pushforward}) we obtain after a direct computation
\[
G^\pm(w)=iw+\varepsilon\sum_{k=0}^{n^\pm+2}\bigl(p_k^\pm+iq_k^\pm\bigr)w^k,
\]
with coefficients given by
\[
p_k^\pm=\operatorname{Re}\!\Bigl(-\tfrac{\delta_{k-2}^\pm}{2}+i\delta_{k-1}^\pm+\tfrac{\delta_k^\pm}{2}\Bigr),\qquad
q_k^\pm=\operatorname{Im}\!\Bigl(-\tfrac{\delta_{k-2}^\pm}{2}+i\delta_{k-1}^\pm+\tfrac{\delta_k^\pm}{2}\Bigr),
\]
where \(\delta_k^\pm=a_k^\pm+ib_k^\pm\) for \(k=0,\dots,n^\pm\) and \(\delta_k^\pm=0\) for indices outside this range.

Passing to polar coordinates \(w=re^{i\theta}\) and writing the differential equation for \(dr/d\theta\) we arrive at
\[
\frac{dr}{d\theta}=F^\pm(r,\theta,\varepsilon)=
\varepsilon F_1^\pm(\theta,r)+\varepsilon^2 F_2^\pm(\theta,r)+O(\varepsilon^3),
\]
with
\begin{align*}
F_1^\pm(\theta,r)&=p_1^\pm r+c_0^\pm+\sum_{k=1}^{n^\pm+1}r^{k+1}c_k^\pm,\\
F_2^\pm(\theta,r)&=\frac1r\Bigl(-q_1^\pm r-d_0^\pm-\sum_{k=2}^{n^\pm+2}d_{k-1}^\pm r^k\Bigr)
\Bigl(p_1^\pm r+c_0^\pm+\sum_{k=2}^{n^\pm+2}c_{k-1}^\pm r^k\Bigr),
\end{align*}
where
\begin{align*}
c_0^\pm&=p_0^\pm\cos\theta+q_0^\pm\sin\theta,\qquad 
c_k^\pm=p_{k+1}^\pm\cos(k\theta)-q_{k+1}^\pm\sin(k\theta)\;(k\ge1),\\
d_0^\pm&=q_0^\pm\cos\theta-p_0^\pm\sin\theta,\qquad 
d_k^\pm=q_{k+1}^\pm\cos(k\theta)+p_{k+1}^\pm\sin(k\theta)\;(k\ge1).
\end{align*}

The first‑order averaged function is
\[
M_1^\pm(r)=\int_0^{\pm\pi}F_1^\pm(\theta,r)\,d\theta
=2q_0^\pm\pm\pi p_1^\pm r-2\sum_{k=1}^{\lfloor n^\pm/2\rfloor+1}\frac{q_{2k}^\pm}{2k-1}\,r^{2k}.
\]

Assume without loss of generality that \(n^+\ge n^-\). Then
\[
\begin{aligned}
M_1(r)
&= M_1^+(r)-M_1^-(r) \\
&= 2(q_0^+-q_0^-)+\pi(p_1^++p_1^-)r
+2\sum_{k=1}^{\lfloor n^-/2\rfloor+1}\frac{q_{2k}^--q_{2k}^+}{2k-1}\,r^{2k}
-2\!\!\sum_{k=\lfloor n^-/2\rfloor+2}^{\lfloor n^+/2\rfloor+1}
\!\frac{q_{2k}^+}{2k-1}\,r^{2k}.
\end{aligned}
\]

Thus \(M_1(r)\) is a real polynomial containing \(\lfloor n^+/2\rfloor+3\) monomials. By Descartes’ rule (Theorem~\ref{descartes}) it can have at most \(\lfloor n^+/2\rfloor+2\) positive zeros, counted with multiplicity.  

The map that sends the perturbation parameters \(\{a_k^\pm,b_k^\pm\}\) to the coefficients of \(M_1(r)\) is linear. Inspecting the formulas above one sees that its matrix has a block‑triangular structure and full rank, consequently every choice of the coefficients of \(M_1(r)\) can be realized by a suitable choice of the parameters \(a_k^\pm,b_k^\pm\); in particular, the coefficients are completely free. Since each monomial \(r^{2k}\) has constant sign on \((0,\infty)\), Lemma~\ref{lemma1} guarantees that we can select the parameters so that \(M_1(r)\) possesses exactly \(\lfloor n^+/2\rfloor+2\) simple positive zeros. Proposition~\ref{prop3} then yields, for sufficiently small \(|\varepsilon|\), the same number of hyperbolic limit cycles bifurcating from the periodic orbits of the unperturbed system \(\dot w=iw\).

To obtain further limit cycles we now require the first‑order averaged function to vanish identically. This forces
\[
p_1^+=-p_1^-,\qquad 
q_{2k}^+=q_{2k}^-\;(0\le k\le\lfloor n^-/2\rfloor+1),\qquad
q_{2k}^+=0\;(\lfloor n^-/2\rfloor+2\le k\le\lfloor n^+/2\rfloor+1).
\]

Under these conditions the second‑order averaged function becomes
\[
\begin{aligned}
M_2(r)&=-4p_1^-(p_0^++p_0^-)-2q_0^-(q_1^+-q_1^-)\\
&\quad+\pi\bigl[p_1^-(q_1^+-q_1^-)-2q_0^-(p_2^++p_2^-)-2q_2^-(p_0^++p_0^-)\bigr]r\\
&\quad+\sum_{k=1}^{n^-+1}\bigl(U_k^+-U_k^-\bigr)r^{2k}
+\sum_{k=n^-+2}^{n^++1}U_k^+ r^{2k}\\
&\quad+\pi p_1^-\!\sum_{k=1}^{\lfloor\frac{n^-+1}{2}\rfloor}\!(q_{2k+1}^--q_{2k+1}^+)r^{2k+1}
-\pi p_1^-\!\!\sum_{k=\lfloor\frac{n^-+3}{2}\rfloor}^{\lfloor\frac{n^++1}{2}\rfloor}\!q_{2k+1}^+ r^{2k+1},
\end{aligned}
\]
where
\[
U_k^\pm:=-\frac{2^{k+1} k!}{(2k)!}\sum_{2s+t=2k+1}
\left(\frac{(2(k-1))!}{2^{k-2}(k-1)!}\,p_{2s}^\pm p_t^\pm
+\frac{(t-2s)(2k-2)!}{(2s-1)2^{k-1}(k-1)!}\,q_{2s}^\pm q_t^\pm\right).
\]

The polynomial \(M_2(r)\) contains exactly \(\bigl\lfloor(3\max\{n^+,n^-\}+7)/2\bigr\rfloor\) monomials. Again the coefficients of these monomials can be chosen independently by adjusting the remaining free parameters of the perturbation. Hence, reasoning as before, we can arrange that \(M_2(r)\) has precisely \(\bigl\lfloor(3\max\{n^+,n^-\}+5)/2\bigr\rfloor\) simple positive zeros. Applying Proposition~\ref{prop3} once more we obtain, for small enough \(|\varepsilon|\), that many hyperbolic limit cycles bifurcate from the periodic orbits of the centre \(i\).

Since the upper bound provided by Descartes’ rule is attainable in both cases, we conclude that
\[
\mathcal{L}_{n^+,n^-}\ge\Bigl\lfloor\frac{3\max\{n^+,n^-\}+5}{2}\Bigr\rfloor,
\]
which completes the proof.

\subsection{Proof of Theorem \ref{teonew}}

As we will see the proof of Theorem \ref{teonew} will be a straightforward consequence of the propositions proven in this section.

 \begin{proposition}\label{imp_prop31}
       Consider the PWHS
       \begin{equation}\label{pw_fam1}
\begin{aligned}
\left\{\begin{array}{l}
\dot{z}=(1-i(\lambda+s))+A^{+}z, \text{ when } |z|\geq 1,\\[5pt]
\dot{z}=(1+i\lambda)+A^{-}z,\text{ when } |z|\leq1,
\end{array} \right.
\end{aligned}
\end{equation}
where $\lambda\neq0$, $A^+=\lambda+s+i$ and $A^-=-\lambda+i$. Then there exists a sufficiently small real parameter \( s \) such that there is one limit cycle bifurcating from the point \( i \).

   \end{proposition}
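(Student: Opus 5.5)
The plan is to move the problem to the straight-line setting with the Möbius map $\phi$ of \eqref{mobius_map1}, recognise there a weak focus–weak focus at the origin, and apply Proposition~\ref{aux_lemma} with $k=2$. The resulting limit cycle is then pulled back to a neighbourhood of $i$ using Theorem~\ref{thm:mobius_limit_cycle}.

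\emph{Step 1: pushforward.} For $\phi(z)=(-z+i)/(iz-1)$ we have $a=-1$, $b=i$, $c=i$, $d=-1$ and $ad-bc=2$. The Fundamental Lemma~\ref{pushforward} then gives
\[
G^\pm(w)=\tfrac12(iw+1)^2F^\pm\!\left(\frac{w+i}{iw+1}\right).
\]
For an affine field $F(z)=\alpha+Az$ this evaluates to
\[
G(w)=\tfrac12\bigl[(\alpha+iA)+2i\alpha\, w+(iA-\alpha)w^2\bigr].
\]
With the data of \eqref{pw_fam1} one checks that $\alpha+iA=0$ on both sides. Hence $w=0$, the image of $z=i$, is an equilibrium, and
\[
G^+(w)=(\lambda+s+i)\,w(1+iw),\qquad G^-(w)=(-\lambda+i)\,w(1+iw).
\]
Both fields have the form required in \eqref{pwhs_eq_1}, with $\lambda^+=\lambda+s$ and $\lambda^-=-\lambda$. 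The interior $|z|<1$ goes to $\operatorname{Im}w<0$ and the exterior to $\operatorname{Im}w>0$, which is consistent with \eqref{ch4:eq222}.

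\emph{Step 2: Lyapunov quantities.} Theorem~\ref{teob}(i) gives $V_1=W_1(s)=e^{(\lambda^++\lambda^-)\pi}-1=e^{s\pi}-1$. So $W_1(0)=0$ and $\partial_sW_1(0)=\pi\neq0$, which is exactly the Jacobian condition of Proposition~\ref{aux_lemma} for $k=2$.

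It remains to show that the origin is a weak focus of exactly order $2$ at $s=0$, that is, $V_2\neq0$. This is the main obstacle. I would handle it in two ways.

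\emph{Direct computation.} Each half-system $\dot w=\mu w(1+iw)$ is a Bernoulli equation. The substitution $u=1/w$ turns it into the linear equation $\dot u=-\mu u-i\mu$, whose solution is $u(t)=(u_0+i)e^{-\mu t}-i$. Consequently the orbits in the $u$-plane are logarithmic spirals centred at $-i$, and the half-return map of $w$ near $0$ is the return of these spirals to the real $u$-axis far from the centre. I would expand this return in powers of $w_0$ up to second order, which yields $\omega_2^\pm(\pi)$.

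\emph{Via the formulas of \cite{GASRONSIL1}.} Alternatively, I would insert the quadratic coefficients $i\mu^\pm$ into the explicit $\omega_2^\pm$ given there. I expect $V_2=\omega_2^+(\pi)+\omega_2^-(\pi)e^{3\lambda\pi}$ to reduce at $s=0$ to a nonzero multiple of something like $(e^{\lambda\pi}-e^{-\lambda\pi})$ times a factor of the form $1+e^{\pm\lambda\pi}$. Such an expression vanishes only when $\lambda=0$, and this is precisely why the hypothesis $\lambda\neq0$ appears in the statement.

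\emph{Step 3: conclusion.} Once $V_2\neq0$ is established, Proposition~\ref{aux_lemma} yields, for a small $s$ of the appropriate sign (opposite to that of $V_2$, so that $W_1$ and $W_2$ have opposite signs), one hyperbolic limit cycle of \eqref{ch4:eq222} bifurcating from $w=0$. This cycle lies in a small neighbourhood of $0$. It therefore avoids $w=\infty$, which corresponds to the pole $z=-i$ of $\phi$, and it crosses $\mathbb{R}$ transversally. Theorem~\ref{thm:mobius_limit_cycle} then transfers it to a hyperbolic limit cycle of \eqref{pw_fam1} bifurcating from $z=i$, as claimed.
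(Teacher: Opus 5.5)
Your outline is the paper's proof. Your pushforward $G^\pm=\mu^\pm w(1+iw)$, with $\mu^+=\lambda+s+i$ and $\mu^-=-\lambda+i$, is exactly the system \eqref{pw_fam11} (note $\widetilde A^\pm=i\mu^\pm$). Then $W_1=e^{s\pi}-1$ gives $\det J_s(W_1)(0)=\pi$, and Proposition~\ref{aux_lemma} with $k=2$ concludes.

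As written, however, the proof has a gap at the only step with real content, namely $W_2(0)\neq0$. You describe two ways to compute it but carry out neither. The form you anticipate is also not what comes out. The paper finds $W_2(0)=2e^{\pi\lambda}(1+e^{\pi\lambda})\lambda$, which is linear in $\lambda$ rather than a multiple of $e^{\lambda\pi}-e^{-\lambda\pi}$. So the order-two weak focus, and with it the role of the hypothesis $\lambda\neq0$, is only conjectured in your proposal.

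Your Bernoulli route closes the gap in a few lines once you notice that $u+i=2/(z-i)$. Your logarithmic spirals are therefore just the linear foci $\dot z=A^\pm(z-i)$. Write $z-i=re^{i\theta}$. Then both sides read $\dot r=\lambda^\pm r$, $\dot\theta=1$, and $\mathbb{S}^1$ is the curve $r=-2\sin\theta$. Follow the point of $\mathbb{S}^1$ at $\theta=-\delta$ through $|z|>1$ to $\theta=\pi+\delta'$, and then through $|z|<1$ to $\theta=-\delta''$. This gives
\[
\sin\delta'=e^{\lambda^+(\pi+\delta+\delta')}\sin\delta,\qquad \sin\delta''=e^{\lambda^-(\pi-\delta'-\delta'')}\sin\delta' .
\]
Hence $\delta''=e^{s\pi}\delta+O(\delta^2)$ in general, and at $s=0$
\[
\delta''=\delta+2\lambda(1+e^{\lambda\pi})\delta^2+O(\delta^3).
\]
The quadratic term comes from the curvature of the switching circle: the exterior arc subtends the angle $\pi+\delta+\delta'>\pi$, while the interior arc subtends $\pi-\delta'-\delta''<\pi$.

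Thus $V_2$ is a positive multiple of $\lambda$. The positive factor only reflects how the section is normalised, and the result is consistent with the paper's value. It is nonzero exactly when $\lambda\neq0$, and the cycle appears for small $s$ with $s\lambda<0$. With this computation inserted, your Step~3 (transversal crossing near $w=0$, avoidance of the pole, transfer via Theorem~\ref{thm:mobius_limit_cycle}) goes through.
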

   \begin{proof}
   From Fundamental Lemma \ref{pushforward}, we get that the quadratic piecewise holomorphic system
          \begin{equation}\label{pw_fam11}
\begin{aligned}
\left\{\begin{array}{l}
\dot{w}=(i+\lambda+s)w+\widetilde{A}^{+}w^2, \text{ when } \operatorname{Im}(w)\geq 0,\\[5pt]
\dot{w}=(i-\lambda)w+\widetilde{A}^{-}w^2,\text{ when } \operatorname{Im}(w)\leq0,
\end{array} \right.
\end{aligned}
\end{equation}
such that $\widetilde{A}^+=-1+i(\lambda+s)$ and $\widetilde{A}^-=-1-i\lambda.$ We claim that there exists a real parameter $s$ small enough such that it has one limit cycle bifurcating from the origin. Indeed, using Theorem \ref{teob} to compute the Lyapunov quantities and expanding them around $s=0,$ we obtain the following expressions:
\begin{align*}
W_1 &= \pi s + \mathcal{O}_2(s); \\
W_2 &= 2 e^{\pi \lambda} (1 + e^{\pi \lambda}) \lambda 
       + e^{\pi \lambda} \left(1 + 4 \pi \lambda 
       + e^{\pi \lambda} \left(1 + 5 \pi \lambda\right)\right) s 
       + \mathcal{O}_2(s).
\end{align*}
       Notice that $W_1=0$ and $W_2\neq 0$ provided that $s=0.$ Moreover, 
       $$\det(J_s(W_1))(0)=\pi\neq 0.$$
       From Proposition \ref{aux_lemma} we can conclude that there exist a real parameter $s$ small enough such that system \eqref{pw_fam11} has 1 limit cycle. Since $\phi^{-1}(0)=i$, then  system \eqref{pw_fam1} has one limit cycle bifurcating from the point $i$.
   \end{proof}
\begin{proposition}\label{imp_prop32}
       Consider the PWHS
       \begin{equation}\label{pw_fam1_2}
\begin{aligned}
\left\{\begin{array}{l}
\dot{z}= \frac{1}{2} - \frac{1}{2} s_2 + i\frac{(4 - 3 s_1)}{6}  -  \left( \frac{20}{6} + i s_2 \right) z +  \left( \frac{1}{2} + \frac{1}{2} s_2 - i\frac{(16 + 3 s_1)}{6} \right) z^2, \text{ when } |z|\geq 1,\\[5pt]
\dot{z}= 1 + 2 i +  \left(-2 + i\right) z,\text{ when } |z|\leq1,
\end{array} \right.
\end{aligned}
\end{equation}
Then there exist real parameters $s_1$ and $s_2$ small enough such that it has 2 limit cycles bifurcating from the point \( i \).

   \end{proposition}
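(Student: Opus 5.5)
I would mirror the proof of Proposition~\ref{imp_prop31}. First I would push system \eqref{pw_fam1_2} forward by the Möbius map $\phi$ of \eqref{mobius_map1}, using the Fundamental Lemma~\ref{pushforward}. The pushforward is
\[
G^\pm(w)=\frac{(iw+1)^2}{2}F^\pm\!\left(\frac{w+i}{iw+1}\right).
\]
This produces a piecewise holomorphic system with switching line $\operatorname{Im}(w)=0$. Its outer component is quadratic, with possibly a cubic term vanishing, and its inner component is at most quadratic.

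Next I would check that both components vanish at $w=0$ (equivalently $F^\pm(i)=0$), so that the origin is an equilibrium for all $s_1,s_2$. For the inner field, $F^-(i)=1+2i+(-2+i)i=0$ holds. For the outer field the constant, linear and quadratic coefficients are arranged so that $F^+(i)=0$ identically in $s$; this is a routine check. I would then write the linear parts as $(i+\lambda^\pm)w$, after a time rescaling if necessary to normalize the imaginary part to $1$, so that the system has the form \eqref{pwhs_eq_1}. The inner linear part has real part $\lambda^-$ determined by $-2+i$ conjugated through $\phi$. The coefficient $-20/6$ and the $-is_2$ term of the outer field should be designed so that $\lambda^++\lambda^-=c\,s_2+\dots$, giving $W_1=0$ at $s=0$.

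Then I would compute the Lyapunov quantities $W_1,W_2,W_3$ via Theorem~\ref{teob} and expand them in $(s_1,s_2)$ around $0$. At $s=0$ I would verify $W_1=W_2=0$ and $W_3\neq 0$, so the origin is a weak focus of order $3$. I would also compute the Jacobian $\det J_{(s_1,s_2)}(W_1,W_2)(0)$ and show it is nonzero. The parameter $s_2$ should mainly control $W_1$ (the trace-type quantity), and $s_1$, through the $\omega_2^+$ term coming from the quadratic coefficient, should control $W_2$. The Jacobian is then essentially triangular, with nonzero diagonal entries of the form $\pi\cdot(\text{const})$.

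Proposition~\ref{aux_lemma} with $k=3$ then gives two hyperbolic limit cycles bifurcating from $w=0$ for suitable small $s_1,s_2$. Theorem~\ref{thm:mobius_limit_cycle} transfers them back, since these small cycles stay near $0$, away from the pole of $\phi^{-1}$, and cross transversally. As $\phi^{-1}(0)=i$, this yields two limit cycles bifurcating from $i$ in \eqref{pw_fam1_2}.

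The main obstacle is the explicit evaluation of $W_3$ at $s=0$ and the confirmation that $W_2(0)=0$. These require the functions $\omega_2^\pm(\pi)$ and $\omega_3^\pm(\pi)$ from \cite{GASRONSIL1} for quadratic fields, with nonzero $\lambda^\pm$ producing exponential factors. I would carry this out symbolically and, if needed, certify the sign of $W_3(0)$ numerically with interval bounds.
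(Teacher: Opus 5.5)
Your proposal is correct and follows essentially the same route as the paper: push forward by $\phi$ to obtain $\dot w=(2+s_1+i)w+(s_2-\tfrac{10}{3}i)w^2$ on $\operatorname{Im}(w)\geq 0$ and $\dot w=(-2+i)w-(1+2i)w^2$ on $\operatorname{Im}(w)\leq 0$, compute $W_1,W_2,W_3$ via Theorem~\ref{teob}, check $W_1(0)=W_2(0)=0\neq W_3(0)$ together with $\det J_s(W_1,W_2)(0)\neq 0$, and conclude with Proposition~\ref{aux_lemma}. One minor slip is that your guessed roles of the parameters are reversed: in fact $\lambda^++\lambda^-=s_1$, so $W_1=\pi s_1+\dots$, while $s_2$ sits in the quadratic coefficient and drives $W_2$. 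Also, no time rescaling is needed, since both linear parts already have imaginary part $1$; neither point affects the argument, because only the nonvanishing of the Jacobian matters.
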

\begin{proof}
 From Fundamental Lemma \ref{pushforward} we obtain the piecewise quadratic holomorphic system
\begin{equation}\label{pw_fam11_2}
\begin{cases}
\dot{w} = (2 + s_1 + i)w + \left(s_2 -\frac{10}{3}i\right)w^2, & \text{when } \operatorname{Im}(w) \geq 0, \\[5pt]
\dot{w} = (-2 + i)w-(1+2i)w^2, & \text{when } \operatorname{Im}(w) \leq 0,
\end{cases}
\end{equation}
We claim that for sufficiently small real parameters $s_1, s_2$ this system exhibits 2 limit cycles bifurcating from the origin. 

Applying Theorem \ref{teob} to compute the Lyapunov quantities and expanding them around $s_1 = s_2 = 0$, we obtain:
\begin{align*}
W_1 &= \pi s_1 + \mathcal{O}_2(s); \\
W_2 &=  -\frac{4}{5} e^{2\pi} \left(1 + e^{2\pi}\right) s_2 - \frac{2}{75} e^{2\pi} \left(40 + 40 e^{2\pi} - 150 \pi - 75 e^{2\pi} \pi\right) s_1
+ \mathcal{O}_2(s); \\
W_3 &= \frac{1}{18} e^{4\pi} \left( -72 \left(1 + e^{2\pi}\right)^2 + 72 e^{12\pi} \left(1 + e^{2\pi}\right)^2 + 35 \left(-1 + e^{4\pi}\right) \right) + \mathcal{O}_1(s); \\
\end{align*}
Recall that $W_1 = W_2 = 0$ and $W_3 \neq 0$ when $s_1 = s_2 = 0$. In addition the Jacobian determinant of the first two Lyapunov quantities is nonvanishing:
\[
\det(J_s(W_1, W_2))(0) = -\frac{4\pi}{5} e^{2\pi} \left(1 + e^{2\pi}\right)  \neq 0.
\]
By Proposition~\ref{aux_lemma} there exist sufficiently small real parameters $s_1, s_2$ such that system~\eqref{pw_fam11_2} has 2 limit cycles. Since $\phi^{-1}(0) = i$, system~\eqref{pw_fam1_2} consequently has 2 limit cycles bifurcating from the point $i$.
\end{proof}
 \begin{proposition}\label{imp_prop3}
       Consider the PWHS
       \begin{equation}\label{pw_fam1_3}
\begin{aligned}
\left\{\begin{array}{l}
\dot{z}=\frac{1}{2} \left((1 - s_2) - i (s_1 + 2)\right) + (1 - i s_2) z + \frac{1}{2} (1 + s_2- i s_1) z^2, \text{ when } |z|\geq 1,\\[5pt]
\dot{z}=\frac{1}{2} \left(1 - i s_3\right) + (1 + s_3) z +  \frac{1}{2} (1 + i (2+s_3))z^2,\text{ when } |z|\leq1,
\end{array} \right.
\end{aligned}
\end{equation}
Then there exist real parameters $s_1$, $s_2$ and $s_3$ small enough such that it has 3 limit cycles bifurcating from the point \( i \).

   \end{proposition}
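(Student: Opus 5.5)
The plan is the same as for Propositions~\ref{imp_prop31} and~\ref{imp_prop32}. First transfer system \eqref{pw_fam1_3} to the straight-line setting with the Fundamental Lemma~\ref{pushforward} and $\phi$ from \eqref{mobius_map1}. Here $a=-1$, $b=i$, $c=i$, $d=-1$ and $ad-bc=2$, so for $F(z)=\alpha+\beta z+\gamma z^2$ we get
\[
G(w)=\tfrac12\bigl[\alpha(iw+1)^2+\beta(w+i)(iw+1)+\gamma(w+i)^2\bigr],
\]
with $(w+i)(iw+1)=i(w^2+1)$.

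For the outer field the constant term is $\tfrac12(\alpha+i\beta-\gamma)$. A short check shows it vanishes identically in $s$, and the linear coefficient is $i(\alpha+\gamma)=(1+s_1)+i$. For the inner field the constant term also vanishes, and the linear coefficient is $i(1+i)=-1+i$. Hence the pushed-forward system has the form \eqref{pwhs_eq_1}:
\[
\dot w=(i+1+s_1)w+\widetilde A^+(s)w^2 \ \text{ for } \operatorname{Im}(w)\ge0,\qquad \dot w=(i-1)w+\widetilde A^-(s_3)w^2 \ \text{ for } \operatorname{Im}(w)\le0.
\]
Here $\lambda^+=1+s_1$ and $\lambda^-=-1$, and $\widetilde A^\pm$ are explicit affine functions of the parameters, read off from the $w^2$ coefficients. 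Since $\phi^{-1}(0)=i$, the origin of this system corresponds to the point $i$.

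Next, compute the Lyapunov quantities of the transformed system with Theorem~\ref{teob}, using the explicit $\omega_j^\pm(\pi)$ from \cite{GASRONSIL1} for quadratic holomorphic fields. By item (i), $W_1=e^{\pi s_1}-1=\pi s_1+\mathcal O_2(s)$. Then expand $W_2$, $W_3$ and $W_4$ to first order in $s=(s_1,s_2,s_3)$ around $0$. The parameters in \eqref{pw_fam1_3} seem designed for the following: $W_2$ depends essentially on $s_2$ through $\operatorname{Im}$ or $\operatorname{Re}$ of $\widetilde A^+$, and $W_3$ on $s_3$ through $\widetilde A^-$. What must be verified is:
\begin{enumerate}
\item[(1)] $W_1(0)=W_2(0)=W_3(0)=0$;
\item[(2)] $W_4(0)\neq0$, so the origin is a weak focus of order $4$ at $s=0$;
\item[(3)] $\det J_s(W_1,W_2,W_3)(0)\neq0$.
\end{enumerate}
Because $W_1$ depends only on $s_1$ to first order, the Jacobian is block triangular, and (3) reduces to the $2\times2$ minor $\partial(W_2,W_3)/\partial(s_2,s_3)$ at $0$.

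Given (1)–(3), Proposition~\ref{aux_lemma} with $k=4$ yields small real $s_1,s_2,s_3$ for which the transformed system has $3$ hyperbolic limit cycles bifurcating from the origin. These cycles are small, so they avoid the pole $-i$ of $\phi$ and cross $\mathbb R$ transversally. Theorem~\ref{thm:mobius_limit_cycle} then carries them back to $3$ hyperbolic limit cycles of \eqref{pw_fam1_3} bifurcating from $\phi^{-1}(0)=i$.

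The main obstacle is the symbolic computation of $W_3$ and $W_4$, since $\omega_3^\pm$ and $\omega_4^\pm$ are lengthy. The key difficulty is showing that at $s=0$ the nonlinear terms of $W_3$ cancel exactly, with $e^{\lambda^+\pi}\omega_3^+-2(\omega_2^+)^2+\omega_3^-e^{5\lambda^+\pi}=0$, while $W_4(0)$, an explicit combination of powers of $e^{\pi}$, is nonzero. I would first simplify using $\lambda^+=-\lambda^-=1$ at $s=0$, which makes many exponentials combine. I would then check $W_2(0)=0$ directly, since $\omega_2^\pm$ are simple; this fixes the relation between $\widetilde A^\pm(0)$ built into the coefficients. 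Finally I would evaluate $W_4(0)$ and the $2\times2$ minor, numerically if needed, to confirm they are nonzero.
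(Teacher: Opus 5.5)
Your proposal is correct and follows the paper's proof. The pushforward gives exactly $\dot w=(1+s_1+i)w+(s_2+i)w^2$ and $\dot w=(-1+i)w+i(1+s_3)w^2$; the paper then reports the first-order expansions of $W_1,\dots,W_4$ from Theorem~\ref{teob} and applies Proposition~\ref{aux_lemma} with $k=4$. The computation you defer confirms the structure you anticipated: $W_2=e^{\pi}(1+e^{\pi})(s_2-s_1)+\mathcal{O}_2(s)$ has no $s_3$-term, $W_3$ has $s_3$-coefficient $-\tfrac12 e^{2\pi}(e^{2\pi}-1)$, and $W_4(0)=\tfrac1{12}e^{3\pi}(1+e^{\pi})^2(2e^{\pi}-1)\neq0$, so the Jacobian is triangular with nonzero diagonal.
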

\begin{proof}
 From Fundamental Lemma \ref{pushforward} we obtain the piecewise quadratic holomorphic system
\begin{equation}\label{pw_fam11_3}
\begin{cases}
\dot{w} = (1 + s_1 + i)w + (s_2 + i)w^2, & \text{when } \operatorname{Im}(w) \geq 0, \\[5pt]
\dot{w} = (-1 + i)w + i(1 + s_3)w^2, & \text{when } \operatorname{Im}(w) \leq 0,
\end{cases}
\end{equation}
We claim that for sufficiently small real parameters $s_1, s_2, s_3$ this system exhibits 3 limit cycles bifurcating from the origin. 

Using Theorem \ref{teob} the Lyapunov quantities expanded around $s_1=s_2=s_3=0$ take the form
\begin{align*}
W_1 &= \pi s_1 + \mathcal{O}_2(s); \\
W_2 &= -e^{\pi} (1 + e^{\pi}) s_1 + e^{\pi} (1 + e^{\pi}) s_2 + \mathcal{O}_2(s); \\
W_3 &= \dfrac{1}{4} e^{2\pi} \left(-1 + e^{2\pi} + 3\pi - e^{2\pi} \pi\right) s_1 
       - \dfrac{1}{2} e^{2\pi} \left(-1 + e^{2\pi}\right) s_2 \\
     &\quad - \dfrac{1}{2} e^{2\pi} \left(-1 + e^{2\pi}\right) s_3 + \mathcal{O}_2(s); \\
W_4 &= \dfrac{1}{12} e^{3\pi} (1 + e^{\pi})^2 \left(-1 + 2 e^{\pi}\right)  
        + \mathcal{O}_2(s).
\end{align*}

Note that $W_1 = W_2 = W_3 = 0$ and $W_4 \neq 0$ when $s_1 = s_2 = s_3 = 0$. Furthermore the Jacobian determinant of the first three Lyapunov quantities is nonvanishing:
\[
\det(J_s(W_1, W_2, W_3))(0) = \frac{\pi}{2} e^{3\pi} (1 + e^{\pi}) (e^{2\pi} - 1) \neq 0.
\]
By Proposition~\ref{aux_lemma} there exist sufficiently small real parameters $s_1, s_2, s_3$ such that system~\eqref{pw_fam11_3} has 3 limit cycles. Since $\phi^{-1}(0) = i$, system~\eqref{pw_fam1_3} consequently has 3 limit cycles bifurcating from the point $i$.
\end{proof}

Observe that in the proofs of the previous propositions, $G^\pm(0) = 0$, as in system \eqref{pwhs_eq_1}, and therefore sliding segments were not considered. In what follows by introducing a new small real parameter $d$ such that $\phi_{*}F^-(0) \neq 0$, a sliding segment appears. This mechanism gives rise to an additional limit cycle bifurcating from the origin, and consequently system~\eqref{ch4:eq111} will exhibit one more limit cycle bifurcating from the point $\phi^{-1}(0)=i$.

To prove this result we adapt to the piecewise holomorphic setting the ideas from~\cite{doi:10.1137/11083928X}, where the authors study the first return map near the origin for piecewise linear differential equations. In \cite{GASRONSIL1} Gasull et al. proved the following result for piecewise holomorphic systems.

\begin{proposition}[{\cite[Prop.~12]{GASRONSIL1}}]\label{prop_sliding_GAS}
Consider the following family of PWHS
\begin{equation}\label{pw_fam4_GAS}
\dot{w} =
\begin{cases}
(i+\lambda^+)w + \displaystyle\sum_{k=2}^\infty A_k^+ w^k, & \operatorname{Im}(w)\geq0,\\[6pt]
(i+\lambda^+)d + (i+\lambda^-)w + \displaystyle\sum_{k=2}^\infty A_k^- w^k, & \operatorname{Im}(w)\leq0,
\end{cases}
\end{equation}
where $d\in\mathbb{R}$ is a small parameter. When $\lambda^++\lambda^-$ is positive (resp. negative) a sufficiently small $d$ with $d<0$ (resp. $d>0$) yields a limit cycle bifurcating from the origin.
\end{proposition}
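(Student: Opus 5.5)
The plan is a pseudo-Hopf (sliding) bifurcation argument, carried out on the real switching line for system \eqref{pw_fam4_GAS}. First I would locate the sliding set. On $\operatorname{Im}(w)=0$, $w=x\in\mathbb{R}$, the vertical components of the two fields are
\[
\operatorname{Im}\big((i+\lambda^+)x+O(x^2)\big)=x+O(x^2),\qquad \operatorname{Im}\big((i+\lambda^+)d+(i+\lambda^-)x+O(x^2)\big)=d+x+O(x^2).
\]
Hence the crossing set is $\{x(x+d)>0\}$ near the origin. The segment between $0$ and $-d$, of length $O(|d|)$, is a sliding or escaping segment, and which one it is depends on the sign of $d$. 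Its stability is read from the signs above: it is attracting when the upper field points downward and the lower field points upward on it. It contains a pseudo-equilibrium given by the Filippov convex combination.

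Next I would build the half-return maps on the crossing part of the real axis. For the upper field, which for all $d$ is the same weak-focus-type field as when $d=0$, the half-return map is
\[
g^+(x)=-e^{\lambda^+\pi}x\,(1+O(x)),\qquad x>0.
\]
For the lower field, the term $(i+\lambda^+)d$ moves its focus to
\[
w^*(d)=-\frac{(i+\lambda^+)d}{i+\lambda^-}+O(d^2).
\]
So its half-return map, taken from the negative to the positive semi-axis, is an $O(d)$ perturbation of $x\mapsto -e^{\lambda^-\pi}x(1+O(x))$. The composition gives the displacement function
\[
\Delta(x,d)=V_1\,x+\kappa\,d+O(x^2,xd,d^2),\qquad V_1=e^{(\lambda^++\lambda^-)\pi}-1 .
\]
Here $\kappa$ is an explicit nonzero constant, which I would compute from the first-order dependence of the lower half-map on $d$, using the linear part. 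Since $V_1\neq0$ and its sign is that of $\lambda^++\lambda^-$, I would rescale $x=|d|u$. This gives
\[
\Delta/|d| = V_1u+\kappa\,\mathrm{sgn}(d)+O(d).
\]
This has the simple root $u^*=-\kappa\,\mathrm{sgn}(d)/V_1$, which is positive exactly for one sign of $d$. I expect that sign to be $d<0$ when $\lambda^++\lambda^->0$ (and $d>0$ when it is negative), and the computation of $\kappa$ should confirm this. By the implicit function theorem the root persists as a simple zero $x^*(d)=|d|u^*+O(d^2)$, which gives a hyperbolic crossing periodic orbit shrinking to the origin as $d\to0$.

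As a cross-check independent of the exact value of $\kappa$, I would also give a qualitative annulus argument. At scales $|d|\ll r\ll1$ the return map is dominated by $V_1$, so for $\lambda^++\lambda^->0$ orbits are pushed outward. For the correct sign of $d$, the sliding segment has the opposite stability near the origin, so orbits close to it move inward. An odd number of crossing limit cycles, generically one, then lies in between.

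The main obstacle is the rescaled step. We must check that the predicted orbit at $x^*\sim|d|u^*$ really lies in the crossing region, that is, $|u^*|$ lies beyond the sliding endpoint $u=1$ and the arcs do not meet the sliding set. Equivalently, we need the precise value of $\kappa$ relative to $V_1$, and not only its sign. I would handle this first by computing the half-maps of the linear truncations exactly, because for linear fields they are affine in $x$ and $d$. The higher-order terms $A_k^\pm w^k$ are only $O(|d|^2)$ at the scale $x\sim|d|$ and therefore do not affect the simple root.
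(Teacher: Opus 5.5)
\textbf{The sign of $d$, which is the whole content of the statement, is never derived, and it is not what you expect.} The paper gives no proof here; it quotes the result from \cite[Prop.~12]{GASRONSIL1}. Your framework (half-return maps, the blow-up $x=|d|u$, persistence of a simple zero) is the natural one. But you leave the sign of $d$ as something the value of $\kappa$ ``should confirm'', and in general it does not.

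Take $\lambda^-=0$, $\lambda^+=\lambda>0$ and all $A_k^\pm=0$.
\begin{itemize}
\item The lower field is $\dot w=i(w-w^*)$ with $w^*=-d+i\lambda d$. Its orbits are circles centred at $w^*$, so the lower half-map is exactly $y\mapsto-2d-y$.
\item With $g^+(x)=-e^{\lambda\pi}x$, the return map is $P(x)=e^{\lambda\pi}x-2d$.
\item Its fixed point $x^*=2d/(e^{\lambda\pi}-1)$ is a hyperbolic crossing cycle exactly when $d>0$. For $d<0$ one has $P(x)>x$ for every $x>0$.
\end{itemize}
So here $\kappa=-2$, and the cycle needs $d$ of the \emph{same} sign as $\lambda^++\lambda^-$, against the printed rule.

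Your own cross-check already says this. By your criterion the segment is attracting precisely for $d>0$ (it is then $(-d,0)$), and ``opposite stability'' to an unstable focus means attracting.

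In general, your first-order computation gives $\kappa=(1+e^{\lambda^-\pi})\bigl((\lambda^-)^2-2\lambda^+\lambda^--1\bigr)/\bigl(1+(\lambda^-)^2\bigr)$ and the existence condition $\kappa\,d\,(\lambda^++\lambda^-)<0$. So the admissible sign of $d$ depends on $\lambda^\pm$, not on $\operatorname{sgn}(\lambda^++\lambda^-)$ alone. In the regime $\lambda^+=\lambda+s$, $\lambda^-=-\lambda$, $|d|\ll|s|\ll1$ used in Propositions~\ref{imp_prop31}--\ref{imp_prop3}, the sign of $d$ is opposite to $\operatorname{sgn}(s)$ only when $3\lambda^2>1$. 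That holds for $\lambda=2,1$ in Propositions~\ref{imp_prop32} and~\ref{imp_prop3}, but fails for small $\lambda$ in Proposition~\ref{imp_prop31}. A proof must derive the sign from $\kappa$, or add hypotheses, rather than assume it.

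\textbf{The linear half-maps are not affine in $(x,d)$ in general.} You planned to rely on this, but the lower focus $w^*=-(i+\lambda^+)d/(i+\lambda^-)$ has $\operatorname{Im}w^*=(\lambda^+-\lambda^-)d/(1+(\lambda^-)^2)$. When $\lambda^+\neq\lambda^-$ and $\lambda^-\neq0$, the lower half-map is only homogeneous of degree one in $(y,d)$, not linear. For instance, if the fold at $y=-d$ is invisible for the lower field, the slope tends to $-1$ at the fold but to $-e^{\lambda^-\pi}$ for $|y|\gg|d|$.

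Hence $\Delta$ is not of the form $V_1x+\kappa d+O(x^2,xd,d^2)$ near $(0,0)$. After rescaling you get $\Delta/|d|=D(u,\operatorname{sgn}d)+O(d)$ with a nonlinear profile $D$, and $-\kappa\operatorname{sgn}(d)/V_1$ is only the large-$u$ asymptotics of its zero.
\begin{itemize}
\item This suffices when $|d|\ll|\lambda^++\lambda^-|\ll1$, because the zero then sits at $u^*\sim1/|V_1|$.
\item For fixed $\lambda^++\lambda^-$ and $d\to0$, you must locate the zeros of $D$ on the whole crossing range, including near the fold and sliding endpoint, and prove they are simple. Your plan does not address this.
\end{itemize}

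Minor point: for $\lambda^+\neq\lambda^-$ the segment has no interior pseudo-equilibrium. The Filippov sliding field equals $\bigl(\lambda^+-\lambda^-+O(d)\bigr)x^2/d$, which vanishes only at the endpoint $x=0$.
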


We now apply this result to each of the families studied in this section. The idea is to start from the $w$-system obtained via the pushforward of the Möbius map \eqref{mobius_map1}, then add the constant term $(i+\lambda^+)d$ in the lower half-plane as in \eqref{pw_fam4_GAS}, and finally return to the $z$-variable using $\phi^{-1}(w)=\frac{w+i}{iw+1}$. The resulting $z$-systems are presented below.

\medskip
\noindent\textbf{Linear--linear case (Proposition \ref{imp_prop31}).}
Applying Proposition~\ref{prop_sliding_GAS} to the $w$-system
\[
\dot{w} =
\begin{cases}
(i+\lambda+s)w + \widetilde{A}^+ w^2, & \operatorname{Im}(w)\geq0,\\[2mm]
(i+\lambda+s)d + (i-\lambda)w + \widetilde{A}^- w^2, & \operatorname{Im}(w)\leq0,
\end{cases}
\]
with $\widetilde{A}^+=-1+i(\lambda+s)$, $\widetilde{A}^-=-1-i\lambda+(\lambda+s+i)d$, and then returning to $z$ we obtain the following linear--linear system in $z$:
\[
\dot{z} =
\begin{cases}
(1-i(\lambda+s)) + (\lambda+s+i)z, & |z|\geq1,\\[2mm]
(1+i\lambda) + \big(i-\lambda + (2-2i(\lambda+s))d\big)z, & |z|\leq1.
\end{cases}
\]
For $d=0$ we recover the original system \eqref{pw_fam1}. Since $\lambda^++\lambda^- = s$ Proposition~\ref{prop_sliding_GAS} guarantees the bifurcation of one additional limit cycle from $i$ for sufficiently small $d$ with sign opposite to $\operatorname{sgn}(s)$. Thus the perturbed system has 2 limit cycles.

\medskip
\noindent\textbf{Quadratic--linear case (Proposition \ref{imp_prop32}).}
Starting from the $w$-system
\[
\dot{w} =
\begin{cases}
(2+s_1+i)w + \big(s_2-\tfrac{10}{3}i\big)w^2, & \operatorname{Im}(w)\geq0,\\[2mm]
(2+s_1+i)d + (-2+i)w + (-1-2i+(2+s_1+i)d)w^2, & \operatorname{Im}(w)\leq0,
\end{cases}
\]
and after applying $\phi^{-1}$ we obtain the perturbed quadratic--linear system in $z$:
\[
\dot{z} =
\begin{cases}
\frac{1}{2} - \frac{1}{2} s_2 + i\frac{(4 - 3 s_1)}{6}  -  \left( \frac{20}{6} + i s_2 \right) z +  \left( \frac{1}{2} + \frac{1}{2} s_2 - i\frac{(16 + 3 s_1)}{6} \right) z^2, & |z|\geq1,\\[4mm]
1+2i + \big(-2+i + d(2-i(4+2s_1))\big)z, & |z|\leq1.
\end{cases}
\]
The original system \eqref{pw_fam1_2} is obtained when $d=0$. Here $\lambda^++\lambda^- = s_1$, so for $d$ with sign opposite to $\operatorname{sgn}(s_1)$ and small enough the system has 3 limit cycles.

\medskip
\noindent\textbf{Quadratic--quadratic case (Proposition \ref{imp_prop3}).}
Applying the same procedure to the $w$-system
\[
\dot{w} = 
\begin{cases}
(1+s_1+i)w + (s_2+i)w^2, & \operatorname{Im}(w)\geq0,\\[2mm]
(1+s_1+i)d + (-1+i)w + i(1+s_3)w^2, & \operatorname{Im}(w)\leq0,
\end{cases}
\]
and applying $\phi^{-1}$ gives the following quadratic--quadratic system in $z$:
\[
\dot{z} = 
\begin{cases}
\dfrac{1}{2}\big((1-s_2)-i(s_1+2)\big) + (1-is_2)z + \dfrac{1}{2}\big(1+s_2-is_1\big)z^2, & |z|\geq1,\\[8pt]
\dfrac{1}{2}\big(1 - i s_3 + d(1+s_1+i)\big) + \big(1+s_3 - i d(1+s_1+i)\big)z & \\[2mm]
\qquad + \dfrac{1}{2}\big(1 + i(2+s_3) - d(1+s_1+i)\big)z^2, & |z|\leq1.
\end{cases}
\]
Setting $d=0$ gives back the unperturbed system \eqref{pw_fam1_3}. Here $\lambda^++\lambda^- = s_1$. Proposition \ref{prop_sliding_GAS} implies that for sufficiently small $d$ with sign opposite to $\operatorname{sgn}(s_1)$, one additional limit cycle bifurcates from $i$. Therefore the system exhibits 4 limit cycles.

\begin{proof}[Proof of Theorem \ref{teonew}]
     The three propositions above provide systems with $1$, $2$ and $3$ limit cycles respectively, by choosing the parameters  $s_1,s_2,$ and $s_3$ sufficiently small. Moreover, the sliding construction applied to each of these families adds one extra limit cycle, yielding systems with $2$, $3$ and $4$ limit cycles. Since all these limit cycles are hyperbolic (they arise from nondegenerate bifurcations), we obtain the lower bounds
\[
\mathcal{L}^0_{1,1}\ge 2,\qquad \mathcal{L}^0_{1,2}=\mathcal{L}^0_{2,1}\ge 3,\qquad \mathcal{L}^0_{2,2}\ge 4.
\]
This completes the proof of Theorem \ref{teonew}.
\end{proof}

\subsection{Proof of Theorem \ref{teo11}}
 First, we prove statement $(a)$. 
 Suppose that $f^\pm(z)=A^\pm z+B^\pm,$ where $A^\pm=a_1^\pm+ia_2^\pm,B^\pm=b_1^\pm+ib_2^\pm\in\mathbb{C}.$ Since $z=x+iy$, then the vector fields $F^\pm=\overline{f^\pm}$ can be written by $F^\pm(x,y)=u^\pm(x,y)-iv^\pm(x,y),$ with
 $$u^\pm(x,y)=b_1^\pm+a_1^\pm x-a_2^\pm y, v^\pm(x,y)=b_2^\pm+a_2^\pm x+a_1^\pm y.$$ We claim  that 
\begin{equation}\label{eq}
    \dot{x} = u^\pm(x,y), \quad
\dot{y} = -v^\pm(x,y),
\end{equation}
is a Hamiltonian system. Indeed, considering the function \begin{equation}\label{hamil1}
H^\pm(x,y)=b_2^\pm x+b_1^\pm y+\frac{a_2^\pm}{2}x^2+a_1^\pm xy-\frac{a_2^\pm}{2}y^2,    
\end{equation}
we get 
$$\frac{\partial H^\pm}{\partial y}(x,y)=u^\pm(x,y), \quad \frac{\partial H^\pm}{\partial x}(x,y)=v^\pm(x,y).$$
Therefore \eqref{eq} is a Hamiltonian system. 

Taking $A=\left(\frac{2x}{x^2+1},\frac{1-x^2}{x^2+1}\right),
\, B=\left(\frac{2y}{y^2+1},\frac{1-y^2}{y^2+1}\right)\in\mathbb{S}^1$ we define
\[
c^\pm(x,y)=\frac{H^\pm(A)-H^\pm(B)}{x-y},
\]
for \(x \neq y\). Recall that in the \((x,y)\)-plane, \(c^\pm(x,y)=0\) defines implicitly the half return map in \(\Sigma^\pm\). From the definition of $H^\pm$ given \eqref{hamil1}, we obtain
\[
c^\pm(x,y)=-\frac{2 \alpha^\pm(x, y)}{(1 + x^2)^2 (1 + y^2)^2},
\]
where 
\[
\begin{array}{rcl}
\alpha^\pm(x, y) & = &
-a_1^\pm - b_2^\pm - 2 a_2^\pm x + b_1^\pm x + a_1^\pm x^2 - b_2^\pm x^2 + b_1^\pm x^3 - 2 a_2^\pm y + b_1^\pm y \\
&& + 3 a_1^\pm x y + b_2^\pm x y + b_1^\pm x^2 y + a_1^\pm x^3 y + b_2^\pm x^3 y + a_1^\pm y^2 - b_2^\pm y^2 \\
&& + b_1^\pm x y^2 + 3 a_1^\pm x^2 y^2 - b_2^\pm x^2 y^2 + 2 a_2^\pm x^3 y^2 + b_1^\pm x^3 y^2 + b_1^\pm y^3 + a_1^\pm x y^3 \\
&& + b_2^\pm x y^3 + 2 a_2^\pm x^2 y^3 + b_1^\pm x^2 y^3 - a_1^\pm x^3 y^3 + b_2^\pm x^3 y^3.\\
\end{array}
\]
Hence \(c^\pm(x,y)=0\) implies that \(\alpha^\pm(x, y)=0\). In addition, the polynomial \(\alpha(x, y)\) is symmetric. Thus \((x_0, y_0)\) is a solution of \(\alpha^\pm(x, y)=0\) if, and only if \((y_0, x_0)\) also represents a solution. Consequently both solutions lead to the same possible limit cycle.

Furthermore, we know that if \( (x_0, y_0) \) satisfies \( \alpha^+(x, y)=\alpha^-(x, y) = 0 \), then \( x_0 \) must be a root of the resultant polynomial \( R(x) \) of \( \alpha^- \) and  \( \alpha^+ \) with respect to the variable \( y \). Similarly \( y_0 \) must be a root of the resultant polynomial \( S(y) \) of \( \alpha^- \) and  \( \alpha^+ \) concerning \( x \). Moreover, it is easy to see that $R(x)=S(x)$. Thus
\[
R(x) = -4 (1+x^2)^6(\beta_0+\beta_1x+\beta_2x^2+\beta_3x^3+\beta_4x^4+\beta_5x^5+\beta_6x^6),
\]
for certain coefficients $\beta_i=\beta_i(a_1^\pm,a_2^\pm,b_1^\pm,b_2^\pm)\in\mathbb{R},$ for $i=1,\dots,6$, which we omit here due to their size. Since the solutions of \((1+x^2)^6\) are complex we conclude that \(R(x)\) has at most 6 real roots. Due to the symmetric property we can conclude that the PWCS \eqref{ch4:eq111} can have at most three limit cycles.

Now we prove statement $(b)$. Assume that \( f^+(z) = A^\pm z^2 + B^\pm z+C^\pm \), where \( A^\pm = a_1^\pm + ia_2^\pm \), \( B^\pm = b_1^\pm + ib_2^\pm\),  \(C^\pm = c_1^\pm + ic_2^\pm \in \mathbb{C} \). Given that \( z = x + iy \) the vector fields \( F^\pm = \overline{f^\pm} \) can be expressed as \( F^\pm(x,y) = u^\pm(x,y) - iv^\pm(x,y) \), where
\[\begin{array}{rcl}
u^\pm(x,y) &=& c_1^\pm + b_1^\pm x + a_1^\pm x^2 - b_2^\pm y - 2 a_2^\pm x y - a_1^\pm y^2,\\
v^\pm(x,y) &=& c_2^\pm + b_2^\pm x + a_2^\pm x^2 + b_1^\pm y + 2 a_1^\pm x y - a_2^\pm y^2.
\end{array}\]
We assert that 
\begin{equation}\label{eq1}
\dot{x} = u^\pm(x,y), \quad \dot{y} = -v^\pm(x,y)
\end{equation}
describes a Hamiltonian system. Indeed, considering the function 

\begin{equation}\label{hamil2}
H^\pm(x,y) = c_2^\pm x + \frac{b_2^\pm x^2}{2} + \frac{a_2^\pm x^3}{3} + c_1^\pm y + b_1^\pm x y + a_1^\pm x^2 y - \frac{b_2^\pm y^2}{2} - a_2^\pm x y^2 - \frac{a_1^\pm y^3}{3}.
\end{equation}
we find that 

\[
\frac{\partial H^\pm}{\partial y}(x,y) = u^\pm(x,y), \quad \frac{\partial H^\pm}{\partial x}(x,y) = v^\pm(x,y).
\]
Thus \eqref{eq1} indeed represents a Hamiltonian system. From the definition of \( H^\pm \) provided in \eqref{hamil2} we get 
\[
c^\pm(x,y) = \frac{2 \alpha^\pm(x, y)}{3(1 + x^2)^3 (1 + y^2)^3},
\]
where 
\[
\begin{array}{rcl}
\alpha^\pm(x, y) &= &-3 a_2^- + 3 b_1^- + 3 c_2^- + 9 a_1^- x + 6 b_2^- x - 3 c_1^- x + 10 a_2^- x^2 + 6 c_2^- x^2 - 6 a_1^- x^3 \\
&&+ 6 b_2^- x^3 - 6 c_1^- x^3 - 3 a_2^- x^4 - 3 b_1^- x^4 + 3 c_2^- x^4 + a_1^- x^5 - 3 c_1^- x^5 + 9 a_1^- y \\
&&+ 6 b_2^- y - 3 c_1^- y + 19 a_2^- x y - 9 b_1^- x y - 3 c_2^- x y - 6 a_1^- x^2 y + 6 b_2^- x^2 y - 6 c_1^- x^2 y\\
&& + 6 a_2^- x^3 y - 12 b_1^- x^3 y - 6 c_2^- x^3 y + a_1^- x^4 y - 3 c_1^- x^4 y + 3 a_2^- x^5 y- 3 b_1^- x^5 y \\
&& - 3 c_2^- x^5 y + 10 a_2^- y^2 + 6 c_2^- y^2 - 6 a_1^- x y^2 + 6 b_2^- x y^2- 6 c_1^- x y^2 + 36 a_2^- x^2 y^2 \\
&& - 12 b_1^- x^2 y^2 + 12 c_2^- x^2 y^2 - 44 a_1^- x^3 y^2 - 12 c_1^- x^3 y^2- 6 a_2^- x^4 y^2 - 12 b_1^- x^4 y^2 \\
&& + 6 c_2^- x^4 y^2 - 6 a_1^- x^5 y^2 - 6 b_2^- x^5 y^2 - 6 c_1^- x^5 y^2- 6 a_1^- y^3 + 6 b_2^- y^3- 6 c_1^- y^3  \\
&& + 6 a_2^- x y^3 - 12 b_1^- x y^3 - 6 c_2^- x y^3- 44 a_1^- x^2 y^3 - 12 c_1^- x^2 y^3 - 36 a_2^- x^3 y^3  \\
&&- 12 b_1^- x^3 y^3 - 12 c_2^- x^3 y^3- 6 a_1^- x^4 y^3 - 6 b_2^- x^4 y^3 - 6 c_1^- x^4 y^3 - 10 a_2^- x^5 y^3\\
&& - 6 c_2^- x^5 y^3 - 3 a_2^- y^4 - 3 b_1^- y^4 + 3 c_2^- y^4 + a_1^- x y^4 - 3 c_1^- x y^4 - 6 a_2^- x^2 y^4 \\
&&- 12 b_1^- x^2 y^4 + 6 c_2^- x^2 y^4 - 6 a_1^- x^3 y^4 - 6 b_2^- x^3 y^4 - 6 c_1^- x^3 y^4- 19 a_2^- x^4 y^4 \\
&& - 9 b_1^- x^4 y^4 + 3 c_2^- x^4 y^4 + 9 a_1^- x^5 y^4 - 6 b_2^- x^5 y^4- 3 c_1^- x^5 y^4 + a_1^- y^5  - 3 c_1^- y^5 \\
&& + 3 a_2^- x y^5 - 3 b_1^- x y^5 - 3 c_2^- x y^5- 6 a_1^- x^2 y^5 - 6 b_2^- x^2 y^5 - 6 c_1^- x^2 y^5 - 10 a_2^- x^3 y^5  \\
&&- 6 c_2^- x^3 y^5 + 9 a_1^- x^4 y^5 - 6 b_2^- x^4 y^5 - 3 c_1^- x^4 y^5 + 3 a_2^- x^5 y^5 + 3 b_1^- x^5 y^5 - 3 c_2^- x^5 y^5.
\end{array}
\]
Therefore \( c^\pm(x,y) = 0 \) implies that \( \alpha^\pm(x, y) = 0 \). Additionally, the polynomial \( \alpha(x, y) \) is symmetric, so \( (x_0, y_0) \) is a solution of \( \alpha^\pm(x, y) = 0 \) if, and only if, \( (y_0, x_0) \) is also a solution. Consequently, both solutions lead to the same potential limit cycle.

Using the same ideas as in statement $(a)$ we know that if \( (x_0, y_0) \) satisfies \( \alpha^+(x, y) = \alpha^-(x, y) = 0 \), then \( x_0 \) must be a root of the resultant polynomial \( R(x) \) of \( \alpha^- \) and \( \alpha^+ \) with respect to the variable \( y \). Similarly \( y_0 \) must be a root of the resultant polynomial \( S(y) \) of \( \alpha^- \) and \( \alpha^+ \) concerning \( x \). Moreover, it is straightforward to see that \( R(x) = S(x) \). Thus 

\[
R(x) = 36864 (1+x^2)^{15} \sum_{j=1}^{20}\beta_j x^j,
\]
for certain coefficients \( \beta_j = \beta_j(a_1^\pm, a_2^\pm, b_1^\pm, b_2^\pm,c_1^\pm, c_2^\pm) \in \mathbb{R} \) with \( j = 1, \dots, 20 \), which we omit here due to their size. Since the solutions of \( (1+x^2)^{20} \) are complex we conclude that \( R(x) \) can have at most 20 real roots. Due to the symmetry property we can conclude that the PWCS \eqref{ch4:eq111} can have at most ten limit cycles.

\subsection{Proof of Theorem \ref{teod}}

The proof relies on the normal form classification given in 
Proposition~\ref{GGJ}. Each holomorphic vector field $F^\pm$ is locally
conformally conjugate to one of a finite collection of normal forms.

For each of these normal forms we establish, in the propositions below,
that the corresponding piecewise system cannot possess crossing limit
cycles. Since conformal conjugacies preserve trajectories and their
intersection properties with the switching circle, the absence of
crossing limit cycles for the normal forms implies the same property
for the original system.

\begin{proposition}\label{prop:linear}
Consider the piecewise complex system \eqref{ch4:eq111}. If one side of the 
switching circle is governed by the linear vector field 
$F(z)=(\alpha+i\beta)z$ with $\alpha,\beta\in\mathbb{R}$, then the system 
admits no crossing limit cycles.
\end{proposition}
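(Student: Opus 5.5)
The plan is to exploit the explicit integrability of the linear field. For $\dot z=(\alpha+i\beta)z$ the solution through $z_0$ is $z(t)=z_0e^{(\alpha+i\beta)t}$, so
\[
|z(t)|=|z_0|\,e^{\alpha t}.
\]
Thus the modulus, which is exactly the function whose level set $\{|z|=1\}$ is the switching manifold $\Sigma=\mathbb{S}^1$, evolves monotonically (or stays constant) along every orbit of this field. The argument then splits into the cases $\alpha\neq0$ and $\alpha=0$, and in both I will show that no closed orbit can cross $\Sigma$.

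First, I fix what a crossing limit cycle $\Gamma$ must do. Suppose $\Gamma$ is a closed orbit that meets $\Sigma$ only at crossing points in the Filippov sense. Then $\Gamma$ cannot lie entirely in one closed region: it is not contained in $\Sigma$ (crossing points are isolated), and it must actually cross $\Sigma$. Hence $\Gamma$ contains an arc lying in the region governed by $F(z)=(\alpha+i\beta)z$. Concretely, there are $z_0\in\mathbb{S}^1$ and $T>0$ such that $z(t)=z_0e^{(\alpha+i\beta)t}$ stays in that region for $t\in(0,T)$ and returns to $\mathbb{S}^1$ at $t=T$. This holds whether the linear field is placed inside or outside the circle, since only the two endpoint conditions are used.

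\emph{Case $\alpha\neq0$.} The endpoint conditions $|z(0)|=|z(T)|=1$ give $e^{\alpha T}=1$, which forces $T=0$, a contradiction. Equivalently, since $t\mapsto|z(t)|$ is strictly monotone, an orbit leaving $\mathbb{S}^1$ into that region never comes back to $\mathbb{S}^1$. So no arc of the required form exists, and there is no crossing periodic orbit at all.

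\emph{Case $\alpha=0$.} Here $|z(t)|$ is constant, so every orbit is a circle centered at the origin, and $\mathbb{S}^1$ itself is invariant. In this case I will check directly that $F$ is tangent to $\Sigma$ at every point. Writing $h(z)=|z|^2-1$, we have $\nabla h\cdot F=2\operatorname{Re}(\bar z F(z))=2\alpha|z|^2=0$ on $\Sigma$. Hence no point of $\Sigma$ satisfies the Filippov crossing condition $(Fh)(F^{\mp}h)>0$. This contradicts hypothesis that $\Gamma$ has crossing points. The degenerate subcase $\alpha=\beta=0$, where every point is an equilibrium, is included and is trivial.

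The main subtlety, rather than a hard computation, is making the first step rigorous. I must justify that a crossing limit cycle necessarily contains a full arc in the linear region with both endpoints on $\Sigma$. In particular, I need to exclude cycles contained in one open region; such cycles do not meet $\Sigma$ and are not crossing cycles by definition. I also need to handle the possibility that the arc ends at a tangency or a sliding point. Hypothesis (i)-type crossing, as in Theorem~\ref{thm:mobius_limit_cycle}, rules this out, and I would state explicitly that this is the notion of crossing limit cycle being used. Once that is settled, the monotonicity of $|z(t)|$ closes the argument, without any assumption on the field on the other side.
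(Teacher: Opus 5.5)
Your proof is correct and follows essentially the same route as the paper: the same split into $\alpha\neq0$ and $\alpha=0$, with the contradiction coming from the radial law $\dot r=\alpha r$, which makes any arc of the linear field starting and ending on $\mathbb{S}^1$ trivial. The differences are minor refinements. You use $|z(t)|=e^{\alpha t}$ directly instead of the paper's first integral $J=\alpha\theta-\beta\ln r$, which avoids handling the multivalued angle. In the case $\alpha=0$ you phrase the obstruction as tangency of $F$ to $\Sigma$ at every point, so the Filippov crossing condition never holds; this is a sharper form of the paper's remark that orbits starting on $\mathbb{S}^1$ stay on it.
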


\begin{proof}
We prove the case where the linear field acts in the exterior region $|z|\geq1$, 
the case where it acts in the interior $|z|\leq1$ is completely analogous.

Write $z=re^{i\theta}$. In the exterior region we have
\[
\dot{r}=\alpha r,\qquad \dot{\theta}=\beta.
\]
This system possesses the first integral $J(r, \theta) = \alpha\theta - \beta\ln r$, as verified by the computation:
\[
\frac{dJ}{dt} = \frac{\partial J}{\partial r}\dot{r} + \frac{\partial J}{\partial \theta}\dot{\theta} 
= \left(-\frac{\beta}{r}\right)(\alpha r) + (\alpha)(\beta) = 0.
\]

Assume, for contradiction, that a transversal crossing limit cycle exists. 
It must intersect the unit circle $\mathbb{S}^1$ at two distinct points 
$P_1=(1,\theta_1)$ and $P_2=(1,\theta_2)$ with $\theta_1\neq\theta_2$, and 
the exterior arc of the cycle connects these two points.

We distinguish two cases.

\paragraph{Case 1: $\alpha\neq0$.}  
Since $J$ is constant on the exterior arc, we have $J(1,\theta_1)=J(1,\theta_2)$, 
i.e. $\alpha\theta_1=\alpha\theta_2$. Because $\alpha\neq0$ this forces 
$\theta_1=\theta_2$, contradicting the hypothesis that the two intersection 
points are distinct.

\paragraph{Case 2: $\alpha=0$.}  
Then the exterior dynamics reduces to $\dot{r}=0$, $\dot{\theta}=\beta$. Every 
trajectory starting on $\mathbb{S}^1$ remains on the circle $r=1$ for all time. 
Consequently, an exterior arc starting at $P_1$ can never reach a different 
point $P_2$ of $\mathbb{S}^1$.

Both cases lead to a contradiction. Hence no crossing limit cycle 
can exist, regardless of the dynamics on the other side.

If the linear field acts in the interior $|z|\leq1$, the same argument applies 
to the interior arc, using the same first integral $J$ (which is also constant 
in $|z|\leq1$). The analysis is identical.
\end{proof}

\begin{proposition}\label{prop:monomial}
Consider the piecewise complex system \eqref{ch4:eq111}. Suppose $F^-(z)=z^n$ with $n\in \mathbb{Z}\setminus\{1\}$ and $F^+(z)=z^m$ with $m\in \mathbb{Z}\setminus\{1\}$.
Assume that:
\begin{enumerate}
\item[(i)] There exists an orbit of $\dot z = F^-(z)$ contained in $|z|\leq1$ connecting two distinct points $p,q\in \mathbb{S}^1$.
\item[(ii)] There exists an orbit of $\dot z = F^+(z)$ contained in $|z|\geq1$ connecting the same points $p,q\in \mathbb{S}^1$.
\end{enumerate}
Then:
\begin{enumerate}
\item[(a)] The points $p,q$ are symmetric with respect to some line through the origin.
\item[(b)] Each of the fields $F^-$ and $F^+$ possesses infinitely many orbits connecting $\mathbb{S}^1$ to itself in its respective region.
\item[(c)] The piecewise system has a \emph{center}: a continuous family of periodic orbits covering a neighbourhood of $\mathbb{S}^1$.
\item[(d)] In particular, no limit cycle exists.
\end{enumerate}
\end{proposition}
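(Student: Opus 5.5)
The plan is to exploit the explicit global first integral available for monomial fields and reduce everything to the restriction of that first integral to $\mathbb{S}^1$. For $n\neq1$ the function $G_n(z)=z^{1-n}/(1-n)$ is a single-valued primitive of $1/z^n$ on $\mathbb{C}\setminus\{0\}$, because the exponent is an integer. By Theorem~\ref{teoint}, the orbits of $\dot z=z^n$ therefore lie on level curves of $H_n(z)=\operatorname{Im}(z^{1-n})/(1-n)$. On the unit circle $z=e^{i\theta}$ this gives
\[
h_n(\theta):=H_n(e^{i\theta})=\frac{\sin((1-n)\theta)}{1-n}.
\]
The analogous formulas hold for $H_m$ and $h_m$ on the outer side. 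The case $n=0$ (a constant field, whose orbits are parallel chords) fits the same formula and can be used as a sanity check.

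For (a), let $p=e^{i\theta_1}$ and $q=e^{i\theta_2}$ be the two connected points. Hypothesis (i) gives $h_n(\theta_1)=h_n(\theta_2)$. Hence either $(1-n)\theta_2\equiv\pi-(1-n)\theta_1$, or $(1-n)\theta_2\equiv(1-n)\theta_1 \pmod{2\pi}$. In the first alternative $p$ and $q$ are mirror images in a line $L$ through the origin whose angle $\theta_0$ is a critical point of $h_n$, so $(1-n)\theta_0\equiv\pi/2 \pmod\pi$. I must rule out the second alternative, in which $p$ and $q$ differ by a rotation through a multiple of $2\pi/(n-1)$. To do so I will use that the level set $\{H_n=c\}$ inside the disk decomposes into arcs each of which has $H_n|_{\mathbb{S}^1}$ taking the same value at its endpoints. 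Between two such endpoints, $h_n$ must have exactly one critical point on the shorter arc, since the orbit crosses each sector of $\mathbb{S}^1$ between consecutive critical points only once. This forces the reflection alternative.

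I will also use two symmetries of the field. Conjugation $z\mapsto\bar z$ maps $\dot z=z^n$ to itself with the same time orientation. Rotations $z\mapsto\omega z$ with $\omega^{n-1}=1$ are also symmetries. Composing them shows that the reflection in $L$ leaves $\dot z=z^n$ invariant up to time reversal. The same argument applied to $F^+$ and hypothesis (ii) yields a line $L'$ through $p,q$-symmetric position. Since $p\neq q$ determine a unique perpendicular bisector through the origin, we get $L=L'$. This proves (a), and it also shows that $\theta_0$ is a critical point of both $h_n$ and $h_m$.

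For (b) and (c), I study the half-return maps near the arc $pq$. Near $\theta_0$, each function $h_n$ and $h_m$ is symmetric about $\theta_0$, since $h(2\theta_0-\theta)=h(\theta)$, and each has a nondegenerate extremum there. Therefore the map sending $\theta$ to the other solution of $h(\theta')=h(\theta)$ is exactly the reflection $\theta\mapsto2\theta_0-\theta$ on a neighbourhood of $\theta_0$, for both sides. Perturbing $p$ along $\mathbb{S}^1$ then gives a continuum of inner and outer connecting orbits, which is (b). The full return map is the composition of the two half-return maps, namely the reflection applied twice, which is the identity. Thus every nearby crossing orbit is closed, which gives (c), and in particular there are no isolated periodic orbits, which gives (d).

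The step I expect to be hardest is the identification of the half-return map with the reflection. It requires checking that the arc of the level curve starting at a point near $p$ really stays in the correct region ($|z|\le1$ or $|z|\ge1$) until it hits $\mathbb{S}^1$ again, and that it returns at the symmetric point rather than on another branch of the level set. I would handle this by continuity from the given connecting orbit together with transversality of the flow to $\mathbb{S}^1$ away from the tangency points. These tangency points are exactly the zeros of $h_n'$ and $h_m'$.
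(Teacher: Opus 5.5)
Your route is essentially the paper's: restrict the first integral $\operatorname{Im}(z^{1-k})$ to $\mathbb{S}^1$, find a reflection common to both sides, and perturb $p$ along the circle. You are in fact more careful than the paper, which silently discards the branch $(1-k)\theta_2\equiv(1-k)\theta_1$.

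\textbf{The gap in (d).} You only show that the return map is the identity near the particular cycle formed by the given arcs through $p,q$. ``No limit cycle exists'' requires \emph{every} crossing periodic orbit to be non-isolated. An arbitrary one may lie far from $p,q$, may be symmetric about a different common critical angle, or may cross $\mathbb{S}^1$ at $2k>2$ points. Your ``reflection applied twice'' reaches none of these. The missing step is general:
\begin{itemize}
\item along each arc of any crossing cycle, your own local analysis shows the half-return map is a reflection $\theta\mapsto 2\theta_j-\theta$;
\item so the full return map is a composition of an even number of reflections, hence a rotation $\theta\mapsto\theta+C$;
\item having a fixed point, it is the identity.
\end{itemize}
This rules out crossing limit cycles for all $n,m\neq1$, without using (i)--(ii).

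\textbf{The rotation branch.} Your exclusion of it only restates what must be proved (``the orbit crosses each sector only once''). Use $\dot\theta=r^{k-1}\sin((k-1)\theta)$ instead. The rays where $\sin((k-1)\theta)=0$ are invariant, so a connecting arc stays in one open sector of opening $\pi/|k-1|$; for $k=0$ the real diameter is the trivial exception. Hence $0<|(1-k)(\theta_2-\theta_1)|<\pi$. This eliminates the rotation branch and identifies $\theta_0$ as the bisector of that sector. It also shows that $p,q$ are not tangency points, which is exactly the transversality your last paragraph needs.

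\textbf{The scope of (c).} What you obtain is a period annulus around the given cycle, not a neighbourhood of $\mathbb{S}^1$, and nothing more is true in general. For $n=0$, $m=6$ there are cycles through $e^{i(\pi/2\pm\delta)}$ for $0<\delta<\pi/10$. Yet at $e^{i\pi/5}$ the radial components are $\cos(\pi/5)>0$ inside and $\cos\pi=-1$ outside, so that point is an attracting sliding point.

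\textbf{Orientation.} Composing the two half-return maps into the identity presumes the inner arc runs $p\to q$ and the outer arc runs $q\to p$. Hypotheses (i)--(ii) as written do not force this. For $n=0$, $m=4$ and small $\epsilon>0$, both arcs run from $e^{i(\pi/2+\epsilon)}$ to $e^{i(\pi/2-\epsilon)}$, and no periodic orbit passes through these points. You should state compatible orientation as an assumption, as the paper's proof implicitly does.
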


\begin{proof}
Write $z=re^{i\theta}$. For an integer $k\neq 1$ the field $z'=z^k$ in polar coordinates satisfies
\[
r' = r^k\cos((k-1)\theta), \qquad \theta' = r^{k-1}\sin((k-1)\theta),
\]
and a direct computation shows that
$I_k(r,\theta) = r^{1-k}\sin((1-k)\theta)$
is a first integral, constant along every orbit.

For the interior field $F^-(z)=z^n$ and the exterior field $F^+(z)=z^m$ with $n,m\in\mathbb Z\setminus\{1\}$, set
\[
\beta = 1-n,\quad \alpha = 1-m,\quad I^-(\theta) = \sin(\beta\theta),\quad I^+(\theta) = \sin(\alpha\theta),
\]
the restrictions of the first integrals to the unit circle $\mathbb{S}^1$.

Let $p=e^{i\theta_1}$ and $q=e^{i\theta_2}$ be two distinct points of $\mathbb{S}^1$ connected by an orbit of $F^-$ contained in $|z|\leq1$. Since $I^-$ is constant on this orbit we have $I^-(\theta_1)=I^-(\theta_2)$, i.e.\ $\sin(\beta\theta_1)=\sin(\beta\theta_2)$. Hence there exists an integer $k$ such that $\beta\theta_2 = \pi - \beta\theta_1 + 2k\pi$, which yields $\theta_2 = c - \theta_1$ with $c = (\pi+2k\pi)/\beta$. Thus $\theta_2$ is the image of $\theta_1$ under the reflection $\sigma(\theta)=c-\theta$ of the circle. The same argument applied to the exterior orbit of $F^+$ connecting $p$ and $q$ shows that the same reflection $\sigma$ sends $\theta_1$ to $\theta_2$. Therefore $p$ and $q$ are symmetric with respect to a fixed line through the origin determined by $\sigma$, proving statement (a).

Now consider the interior field $F^-$. For each value $c$ in the range of $I^-$ on $\mathbb{S}^1$, the equation $I^-(\theta)=c$ has exactly two solutions, $\theta$ and $\sigma(\theta)$. The corresponding level set $I^-(r,\theta)=c$ is an orbit of $F^-$, because $r^{n-1} = \sin(\beta\theta)/c$ and for all $\theta$ on this curve we have $|\sin(\beta\theta)| \le |c|$, it follows that $r \le 1$ on the whole orbit, so the orbit lies entirely in $|z|\leq1$. Hence each such $c$ gives an interior orbit connecting the two symmetric points of $\mathbb{S}^1$. Since the range of $I^-$ contains an interval there are infinitely many such values $c$ and therefore infinitely many interior connections. The same reasoning applied to $F^+$ shows that there are also infinitely many exterior orbits in $|z|\geq1$ connecting symmetric points of $\mathbb{S}^1$. This establishes statement (b).

Let $p = e^{i\theta_1}$ and $q = e^{i\sigma(\theta_1)}$ be the points connected by the given interior and exterior orbits, and set $c_0 = I^+(\theta_1)$. Since the exterior orbit is contained in $|z|\geq1$, the corresponding level set of $I^+$ lies entirely in $|z|\ge 1$. If $|c_0|=1$ then $\theta_1$ is a maximum or minimum of $I^+$; for $\theta$ sufficiently close to $\theta_1$ but different from it we have $|I^+(\theta)|<1$, and the corresponding exterior orbit stays strictly in $|z|\geq1$. For such $\theta$ define $\tilde p = e^{i\theta}$ and $\tilde q = e^{i\sigma(\theta)}$. By statement (b) there exists an interior orbit of $F^-$ connecting $\tilde p$ to $\tilde q$ and an exterior orbit of $F^+$ connecting $\tilde q$ back to $\tilde p$. Concatenating these two arcs yields a periodic orbit of the piecewise system. Varying $\theta$ continuously produces a continuous family of periodic orbits that fills a neighbourhood of $\mathbb{S}^1$; thus the system possesses a center, proving statement (c).

Finally, any periodic orbit that crosses $\mathbb{S}^1$ must intersect the circle at two points. By the interior dynamics these two points must be symmetric with respect to $\sigma$. The exterior arc joining them is uniquely determined by the level set of $I^+$, and therefore every periodic orbit belongs to the continuous family constructed above. Consequently no periodic orbit is isolated, and the system admits no limit cycles. This completes the proof of statement (d).
\end{proof}

\begin{proposition}
    \label{prop:rational}
Let \(n\in \mathbb{N}\), \(n\ge 2\), and \(\gamma\in\mathbb{R}\) with \(|\gamma|\ge 1\). 
Consider the holomorphic vector field
\[
F_\gamma(z)=\frac{z^n}{1+\gamma z^{\,n-1}},
\]
defined on \(\Omega=\mathbb{C}\setminus\{z:1+\gamma z^{n-1}=0\}\).  
Denote by 
\(\mathbb{S}^1_*=\mathbb{S}^1\setminus\{z:1+\gamma z^{n-1}=0\}\) its regular part.  
Assume that \(F_\gamma\) is assigned to one side of the unit circle, either to 
the interior region \(|z|\leq1\) or to the exterior region \(|z|\geq1\), while the other 
side is filled with an arbitrary holomorphic dynamics.

Then the piecewise system admits no transversal crossing limit cycle
intersecting \(\mathbb{S}^1_*\) in two distinct points.
\end{proposition}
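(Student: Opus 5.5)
The plan is to use the first integral furnished by Theorem~\ref{teoint} and to show that its restriction to $\mathbb{S}^1$ is strictly monotone in the angle. The argument is the same in spirit as Proposition~\ref{prop:linear}.

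\textbf{Step 1: a first integral.} For $F_\gamma$ we have
\[
\frac{1}{F_\gamma(z)}=\frac{1+\gamma z^{n-1}}{z^n}=z^{-n}+\frac{\gamma}{z},
\]
so a (multivalued) primitive is
\[
G(z)=\frac{z^{1-n}}{1-n}+\gamma\log z .
\]
By Theorem~\ref{teoint}, trajectories of $\dot z=F_\gamma(z)$ are level curves of $\operatorname{Im} G$. The term $z^{1-n}/(1-n)$ is single valued on $\mathbb{C}\setminus\{0\}$. The only multivaluedness comes from $\gamma\log z$, and $\operatorname{Im}(\gamma\log z)=\gamma\arg z$ because $\gamma\in\mathbb{R}$.

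\textbf{Step 2: continuation along the arc.} Suppose a crossing limit cycle exists and that, on the side carrying $F_\gamma$, its arc $\Gamma$ connects two distinct points $p=e^{i\theta_1}$ and $q$ of $\mathbb{S}^1_*$. Then $\Gamma$ is a regular orbit of $F_\gamma$. It therefore avoids the equilibrium $0$ and the poles of $F_\gamma$, so it lies in $\Omega\setminus\{0\}$.

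Continue $\arg z$ continuously along $\Gamma$, and let $\Delta\in\mathbb{R}$ be the total change of the argument, so that $q=e^{i(\theta_1+\Delta)}$. With this continuous branch, $\operatorname{Im}G$ is a genuine continuous function along $\Gamma$ and is constant there.

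\textbf{Step 3: the restriction to $\mathbb{S}^1$.} Evaluating at the endpoints, where $z=e^{i\theta}$, gives $h(\theta_1)=h(\theta_1+\Delta)$, with
\[
h(\theta)=\frac{\sin((n-1)\theta)}{n-1}+\gamma\theta,\qquad h'(\theta)=\cos((n-1)\theta)+\gamma .
\]
Since $|\gamma|\ge1$, $h'$ has constant sign. Moreover $h'$ vanishes only at isolated points, and only when $|\gamma|=1$. Hence $h$ is strictly monotone on $\mathbb{R}$, and $h(\theta_1)=h(\theta_1+\Delta)$ forces $\Delta=0$. This gives $q=p$, contradicting the assumption that the intersection points are distinct. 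The contradiction does not involve the other side of the switching circle, so the conclusion holds for arbitrary dynamics there.

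\textbf{Main obstacle.} The delicate point is Step 2: one must compare values of $\operatorname{Im}G$ on a single continuous branch rather than modulo $2\pi\gamma$. Otherwise the relation would read $h(\theta_2+2\pi k)=h(\theta_1)$ for some integer $k$, and that is not contradictory by itself. Continuing $\arg$ along the arc and using the \emph{real} total angle change $\Delta$ resolves this, because then monotonicity applies directly on $\mathbb{R}$.

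A secondary point is the borderline case $|\gamma|=1$. There strict monotonicity still holds because the zeros of $h'$ are isolated, but this fact should be stated explicitly. The argument is identical whether $F_\gamma$ acts in $|z|\le1$ or in $|z|\ge1$.
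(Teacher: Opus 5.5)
Your proof is correct, but it takes a different route from the paper. The paper does not use a first integral for this case. It computes the radial velocity on the circle, $\dot r|_{r=1}=(\cos\varphi+\gamma)/|1+\gamma e^{i\varphi}|^2$ with $\varphi=(n-1)\theta$. It then notes that the numerator $\cos\varphi+\gamma$ has constant sign on $\mathbb{S}^1_*$ when $|\gamma|\ge1$, vanishing only at the poles when $|\gamma|=1$. A transversality argument finishes the proof: an orbit of $F_\gamma$ that leaves the circle cannot come back through a regular point. Your $h'$ is the same quantity in integrated form. Since $\frac{d}{d\theta}G(e^{i\theta})=iz/F_\gamma(z)$, one gets $h'(\theta)=\operatorname{Re}\bigl(z\overline{F_\gamma(z)}\bigr)/|F_\gamma(z)|^2=\dot r|_{r=1}/|F_\gamma|^2=\cos((n-1)\theta)+\gamma$. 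So both proofs rest on the same sign condition.

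What your version gains is a purely algebraic contradiction, $h(\theta_1)=h(\theta_1+\Delta)$ with $h$ injective on $\mathbb{R}$, in place of the paper's entering/exiting argument. It also fits the integrability framework of Section~\ref{sec7} and parallels Proposition~\ref{prop:linear}. The paper's version is more elementary and shows the geometric reason directly: $F_\gamma$ points strictly outward (or strictly inward) at every regular point of $\mathbb{S}^1$, so crossing can occur in only one direction.

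Two small remarks. First, the branch issue you call the main obstacle is harmless. Since $h(\theta+2\pi k)=h(\theta)+2\pi k\gamma$ and $h$ is injective on $\mathbb{R}$, even the relation modulo $2\pi\gamma$, namely $h(\theta_2+2\pi k)=h(\theta_1)$, forces $\theta_2+2\pi k=\theta_1$, that is $q=p$. Second, Theorem~\ref{teoint} is stated for slit neighbourhoods in which $0$ is the only zero or singularity, whereas $F_\gamma$ also has poles. It is cleaner to observe directly that $\frac{d}{dt}G(z(t))=G'(z)\dot z=1$ along solutions, so the continued $\operatorname{Im}G$ is constant on $\Gamma$.
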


\begin{proof}
We work in polar coordinates \(z=re^{i\theta}\). For any \(z\in\Omega\) we have
\[
\dot z = F_\gamma(z)=\frac{r^n e^{in\theta}}{1+\gamma r^{n-1}e^{i(n-1)\theta}}.
\]

The radial component is
$\dot r=\operatorname{Re}(e^{-i\theta}\dot z).$
A direct computation gives
\[
\dot r = \operatorname{Re}\!\left(
\frac{r^n e^{i(n-1)\theta}}
{1+\gamma r^{n-1}e^{i(n-1)\theta}}
\right).
\]

On the unit circle \(r=1\) and away from the poles (i.e. on \(\mathbb{S}^1_*\)), set
\(\varphi=(n-1)\theta\). Then
\[
\dot r\big|_{r=1} =
\operatorname{Re}\!\left(
\frac{e^{i\varphi}}{1+\gamma e^{i\varphi}}
\right).
\]

Multiplying numerator and denominator by the complex conjugate of 
\(1+\gamma e^{i\varphi}\) we obtain
\[
\frac{e^{i\varphi}}{1+\gamma e^{i\varphi}}
=
\frac{e^{i\varphi}(1+\gamma e^{-i\varphi})}
{|1+\gamma e^{i\varphi}|^2}
=
\frac{e^{i\varphi}+\gamma}{|1+\gamma e^{i\varphi}|^2}.
\]

Taking the real part yields
\[
\dot r\big|_{\mathbb{S}^1_*}
=
\frac{\cos\varphi+\gamma}{|1+\gamma e^{i\varphi}|^2}.
\]

Since the denominator is positive on \(\mathbb{S}^1_*\), the sign of \(\dot r\) is the 
sign of \(\cos\varphi+\gamma\).

If \(\gamma\ge 1\), then
$\cos\varphi+\gamma \ge \gamma-1 \ge 0,$
and the equality occurs only when \(\gamma=1\) and \(\cos\varphi=-1\), which 
correspond exactly to the poles (excluded from \(\mathbb{S}^1_*\)). Hence
$\dot r>0$ on $\mathbb{S}^1_*.$

If \(\gamma\le -1\), then
$\cos\varphi+\gamma \le \gamma+1 \le 0,$
with the equality only at the poles. Hence
$\dot r<0$ on $\mathbb{S}^1_*.$

Therefore whenever \(|\gamma|\ge1\) the radial velocity has a constant sign 
on the regular part of the unit circle.

\medskip

\noindent
\textbf{Claim.}
Every trajectory of \(F_\gamma\) intersects \(\mathbb{S}^1_*\) at most once.

Indeed, if \(\gamma\ge1\) any trajectory that reaches \(\mathbb{S}^1_*\) from inside 
(\(r<1\)) satisfies \(\dot r>0\) at the intersection and therefore immediately 
exits to \(r>1\). It cannot return to the circle, because returning would require 
approaching \(\mathbb{S}^1\) from outside with \(\dot r\le0\), contradicting the positivity 
of \(\dot r\) at every point of \(\mathbb{S}^1_*\). Moreover no trajectory can arrive 
from outside because approaching the circle from \(r>1\) would require 
\(\dot r\le0\) just before the contact.

If \(\gamma\le-1\) the situation is symmetric: any trajectory meeting 
\(\mathbb{S}^1_*\) from outside immediately enters the disk and cannot return.

Consequently no trajectory of \(F_\gamma\) contained entirely in one side of 
the circle can connect two distinct points of \(\mathbb{S}^1_*\).

\medskip

Now suppose, for contradiction, that a transversal crossing limit cycle exists. 
Such a cycle must intersect \(\mathbb{S}^1\) at two distinct regular points, one of them 
is an \textit{exit} and the other an \textit{entry}. We consider the two possible 
placements of \(F_\gamma\).

\paragraph{Case 1: \(F_\gamma\) governs the interior (\(|z|\leq1\)).}
The interior segment of the periodic orbit must be a trajectory of \(F_\gamma\) 
contained in \(|z|\leq1\) connecting the entry point to the exit point. This would 
be a trajectory of \(F_\gamma\) joining two distinct points of \(\mathbb{S}^1_*\), which 
is impossible by the property proved above.

\paragraph{Case 2: \(F_\gamma\) governs the exterior (\(|z|\geq1\)).}
In this case the exterior segment of the cycle would be a trajectory of 
\(F_\gamma\) contained in \(|z|\geq1\) connecting the entry point to the exit point. 
Again this would require a trajectory of \(F_\gamma\) intersecting \(\mathbb{S}^1_*\) in 
two distinct points, which is impossible.

Both cases lead to a contradiction. Therefore no transversal crossing limit 
cycle can exist.

Finally, the poles of \(F_\gamma\) on \(\mathbb{S}^1\) are isolated singularities and 
cannot serve as crossing points of a continuous periodic orbit.
\end{proof}

We are now in position to prove Theorem \ref{teod}.

\begin{proof}[Proof of Theorem \ref{teod}]
By Proposition~\ref{GGJ} each vector field $F^\pm$ is locally conformally
conjugate to one of the following normal forms:

\begin{enumerate}
\item $1$ (regular case);
\item $(\alpha+i\beta)z$, $\alpha,\beta\in\mathbb{R}$ (simple zero);
\item $z^n$, $n\ge2$ (zero of higher order with vanishing residue);
\item $\displaystyle \frac{z^n}{1+\gamma z^{\,n-1}}$, 
      where $\gamma=\operatorname{Res}(1/F^\pm,0)\neq0$;
\item $z^{-n}$, $n\ge1$ (pole of order $n$).
\end{enumerate}

The absence of crossing limit cycles for each of these normal forms
has been established in the propositions proved above, namely
Propositions~\ref{prop:linear}, \ref{prop:monomial} and
\ref{prop:rational}.

Since conformal conjugacies preserve trajectories and their intersection
properties with the unit circle, the existence of a crossing limit cycle
for the original system would imply the existence of such a cycle for
one of the normal forms, contradicting the results of
Propositions~\ref{prop:linear}--\ref{prop:rational}.

Therefore the system \eqref{ch4:eq111} admits no crossing limit cycles.
\end{proof}

\subsubsection{Proof of Theorem \ref{thm:alg_s1}}\label{sec:alg_cycles}

The proof is constructive. We first exhibit a piecewise holomorphic system with switching manifold $\mathbb{R}$ that admits an algebraic limit cycle, and then apply a M\"obius transformation to obtain the desired system with circular discontinuity.

Consider the system
\[
\dot{w} =
\begin{cases}
i w, & \operatorname{Im}(w) \geq 0, \\[6pt]
\left(3 + \dfrac{2i}{3}\right) + i w - \left(\dfrac{3}{4} - \dfrac{i}{6}\right) w^2, & \operatorname{Im}(w) \leq 0,
\end{cases}
\]
where the switching manifold is the real axis $\mathbb{R} = \{w\in\mathbb{C}: \operatorname{Im}(w) = 0 \}$.

In the upper half-plane the vector field is $\dot{w}=i w$. Setting $w = 2 e^{i\theta}$ we have $\dot{w}=2i e^{i\theta}$ and $\bar{w}=2 e^{-i\theta}$. Hence
\[
\frac{d}{dt}|w|^2 = \dot{w}\bar{w} + w\dot{\bar{w}} = (2i e^{i\theta})(2 e^{-i\theta}) + (2 e^{i\theta})(-2i e^{-i\theta}) = 4i - 4i = 0,
\]
so $|w|$ is constant and the circle $|w|=2$ is invariant in the upper half-plane. In the lower half-plane, setting $w = 2 e^{i\theta}$ we compute
\[
\dot{w}\bar{w} = \left[\left(3 + \frac{2i}{3}\right) + 2i e^{i\theta} - \left(\frac{3}{4} - \frac{i}{6}\right) 4 e^{2i\theta}\right] 2 e^{-i\theta} = 2\left(3 + \frac{2i}{3}\right)e^{-i\theta} + 4i - 8\left(\frac{3}{4} - \frac{i}{6}\right)e^{i\theta}.
\]
Taking the complex conjugate
\[
\overline{\dot{w}\bar{w}} = 2\left(3 - \frac{2i}{3}\right)e^{i\theta} - 4i - 8\left(\frac{3}{4} + \frac{i}{6}\right)e^{-i\theta}.
\]
Adding the two expressions yields $\frac{d}{dt}|w|^2 = \dot{w}\bar{w} + \overline{\dot{w}\bar{w}} = 0$, hence $|w|=2$ is also invariant in the lower half-plane.

In Cartesian coordinates $(x,y)$, with $w=x+iy$, the lower half-plane system becomes
\[
\dot{x} = 3 - y - \frac{3}{4}x^2 + \frac{3}{4}y^2 - \frac{1}{3}xy, \qquad
\dot{y} = \frac{2}{3} + x - \frac{3}{2}xy + \frac{1}{6}x^2 - \frac{1}{6}y^2.
\]
Define $S(x,y)=x^2+y^2-4$ and $L(x,y)= \frac{1}{3}x - \frac{3}{2}y + 1$. A direct computation yields
\[
\dot{S} = -2\left(\frac{3}{2}x + \frac{1}{3}y\right)S, \qquad
\dot{L} = -2\left(\frac{3}{2}x + \frac{1}{3}y\right)L.
\]
Hence the cofactors satisfy $\mathcal{K}_S = \mathcal{K}_L= -2\left(\frac{3}{2}x + \frac{1}{3}y\right)$, and consequently the function
\[
H(x,y) = \frac{S(x,y)}{L(x,y)} = \frac{x^2+y^2-4}{\frac{1}{3}x - \frac{3}{2}y + 1}
\]
is a first integral in the lower half-plane, because 
$$
\frac{dH}{dt} = \frac{\dot{S}L - S\dot{L}}{L^2} = \frac{(\mathcal{K}_SS)L - S(\mathcal{K}_LL)}{L^2} = \frac{(\mathcal{K}_SS)L - S(\mathcal{K}_SL)}{L^2} = 0.
$$
Take a transversal section $\Sigma_0$ on the positive real axis, i.e. $\Sigma_0 = \{ u \in \mathbb{R} \mid u > 0 \}$. For a point $u \in \Sigma_0$ the flow in the upper half-plane under $\dot{w}=i w$ follows the circle $|w|=u$ and returns to the real axis at $-u$. Hence the upper half-plane return map is $\pi^+(u) = -u$, which takes values on the negative real axis.

To compute the lower half-plane return map we use the first integral $H$. On the real axis ($y=0$) we have
$H(x,0) = (x^2-4)/(\frac{1}{3}x + 1).$
Since $H$ is constant along trajectories in the lower half-plane if a trajectory starts at a point $u$ on the real axis and returns to $v$ (also on the real axis) we have
\[
\frac{u^2-4}{\frac{1}{3}u+1} = \frac{v^2-4}{\frac{1}{3}v+1}.
\]
Solving for $v$ yields two solutions: $v = u$ (the trivial fixed point) and
$v = \pi^-(u) = \frac{-3u - 4}{u + 3},$
which is defined for all $u \neq -3$. In particular, for a point on the negative real axis $u = -u_0$ with $u_0 > 0$ we have
\[
\pi^-(-u_0) = \frac{3u_0 - 4}{-u_0 + 3}.
\]

The total Poincaré map is the composition $\pi = \pi^- \circ \pi^+$, i.e. for $u \in \Sigma_0$
\[
\pi(u) = \pi^-(-u) = \frac{3u - 4}{-u + 3}.
\]

The fixed points of $\pi$ satisfy $\pi(u)=u$. Solving
\[
\frac{3u - 4}{-u + 3} = u \quad \Longrightarrow \quad 3u - 4 = u(-u + 3) = -u^2 + 3u \quad \Longrightarrow \quad u^2 = 4,
\]
hence $u = 2$ (since $u>0$). Moreover
\[
\pi'(u) = \frac{5}{(3-u)^2}, \qquad \pi'(2) = 5 \neq 1.
\]
Thus the fixed point is hyperbolic and therefore isolated.

The resulting limit cycle $\Gamma$ is the circle $|w|=2$. Indeed, the orbit starting at $w=2$ follows the circle in the upper half-plane under $\dot{w}=i w$ until it reaches $w=-2$, then follows the circle in the lower half-plane under $\dot{w}=F^-(w)$ back to $w=2$. The circle $|w|=2$ is invariant in both half-planes, and the first integral $H$ guarantees that the lower half-plane trajectory returns exactly to the starting point. Thus $\Gamma = \{w\in\mathbb{C}: |w| = 2 \}$ is an algebraic limit cycle.

By the Fundamental Lemma~\ref{pushforward}, the pushforward of the system under $\phi^{-1}$ yields a piecewise holomorphic system with switching manifold $\mathbb{S}^1$, where $\phi$ is the Möbius transformation introduced in~\eqref{mobius_map1}, see Figure \ref{fig_cal12}.

For $|z|\geq1$ (the image of $\operatorname{Im}(w)\geq0$) the vector field becomes $\dot{z} = (z^2+1)/2$. For $|z|\leq1$ (the image of $\operatorname{Im}(w)\leq0$), a direct computation using the pushforward formula gives
\[
\dot{z} = \left(\frac{19}{8} + \frac{i}{4}\right) + \left(\frac{5}{6} - \frac{9i}{4}\right)z - \left(\frac{11}{8} + \frac{i}{4}\right)z^2.
\]

The limit cycle $\Gamma = \{w\in\mathbb{C}: |w| = 2 \}$ is mapped to
\[
\phi^{-1}(\Gamma) = \left\{ \frac{w+i}{iw+1} \mid |w| = 2 \right\},
\]
which is the circle with center $(0, -5/3)$ and radius $4/3$. In complex form this cycle is given by $3z\bar{z} - 5i(z - \bar{z}) + 3 = 0$, or equivalently $x^2 + (y+5/3)^2 = (4/3)^2$ in real coordinates. This is an algebraic curve (a circle) and, by Proposition \ref{prop:alg_mobius} it is a limit cycle of the transformed system. Thus we have constructed a PWHS separated by $\mathbb{S}^1$ with an algebraic limit cycle.

\begin{figure}[htbp]
\centering
\begin{overpic}[width=0.9\textwidth]{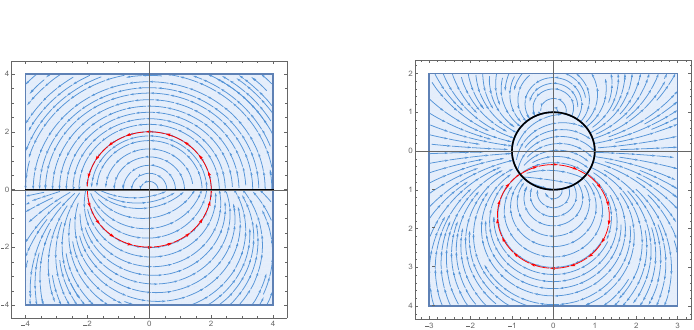}
    \put(15,40){\footnotesize $\phi(\Sigma) = \mathbb{R}$}
    
    \put(45,20){$\xrightarrow{\quad \phi^{-1} \quad}$}
    
    \put(76,40){$\Sigma = \mathbb{S}^1$}
\end{overpic}
\caption{The Möbius transformation $\phi^{-1}(w) = \dfrac{w+i}{iw+1}$ maps the system with switching manifold $\mathbb{R}$ (left) to the system with switching manifold $\mathbb{S}^1$ (right). The limit cycle $|w|=2$ (red) is mapped to the algebraic curve $3z\bar{z} - 5i(z-\bar{z}) + 3 = 0$, which in real coordinates is the circle $x^2 + (y+5/3)^2 = (4/3)^2$ (red).}
\label{fig_cal12}
\end{figure}

\section{Acknowledgements}

The authors would like to express their sincere gratitude to Armengol Gasull for suggesting the study of algebraic limit cycles in holomorphic systems and for his valuable insights that greatly contributed to this work.

Gabriel Rondón is also partially supported by the Agencia Estatal de Investigaci\'on of Spain grant PID2022-136613NB-100 and FAPESP grant 2024/15612-6.

Paulo R. da Silva is partially supported by ANR-23-CE40-0028, 
FAPESP grant 2023/02959-5,  FAPESP grant 2024/15612-6, CAPES/MATH-Amsud (88881.179491/2025-01) and CNPq grant 302154/2022-1.

Jaume Llibre is partially supported by to the Agencia Estatal de Investigación of Spain grant PID2022-136613NB-100.

\bibliographystyle{abbrv}
\bibliography{references1}

\end{document}